\title{Moment based estimation for the multivariate COGARCH(1,1) process}
\author{
Thiago do R\^ego Sousa
\thanks{Center for Mathematical Sciences, Technical University of Munich,  85748 Garching, Boltzmannstr.~3, Germany, email:  thiago.sousa@tum.de}
\and Robert Stelzer 
\thanks{Institute of Mathematical Finance, Ulm University, 89075 Ulm, Helmholtzstr.~18, Germany, email:  robert.stelzer@uni-ulm.de}
}
\numberwithin{equation}{section}
\newtheorem{theorem}{Theorem}[section]
\newtheorem{lemma}[theorem]{Lemma}
\newtheorem{fact}{Fact}[section]
\newtheorem{remark}[theorem]{Remark}
\newtheorem{example}[theorem]{Example}
\newtheorem{proposition}[theorem]{Proposition}
\newtheorem{definition}[theorem]{Definition}
\newtheorem{corollary}[theorem]{Corollary}
\newcommand{\bthe}{\begin{theorem}}
\newcommand{\ethe}{\end{theorem}}
\newcommand{\ble}{\begin{lemma}}
\newcommand{\ele}{\end{lemma}}
\newcommand{\bde}{\begin{definition}\rm}
\newcommand{\ede}{\Chalmos\end{definition}}
\newcommand{\bco}{\begin{corollary}}
\newcommand{\eco}{\end{corollary}}
\newcommand{\bpr}{\begin{proposition}}
\newcommand{\epr}{\end{proposition}}
\newcommand{\brem}{\begin{remark}\rm}
\newcommand{\erem}{\Chalmos\end{remark}}
\newcommand{\bproof}{\begin{proof}}
\newcommand{\eproof}{\end{proof}}
\newcommand{\bexam}{\begin{example}\rm}
\newcommand{\eexam}{\Chalmos\end{example}}
\newcommand{\beao}{\begin{eqnarray*}}
\newcommand{\eeao}{\end{eqnarray*}\noindent}
\newcommand{\beam}{\begin{eqnarray}}
\newcommand{\eeam}{\end{eqnarray}\noindent}
\newcommand{\barr}{\begin{array}}
\newcommand{\earr}{\end{array}}
\newcommand{\diff}{{\rm d}}
\def\disc{{\mathfrak d}}
\def\N{{\mathbb N}}
\def\C{{\mathbb C}}
\def\Z{{\mathbb Z}}
\def\E{{\mathbb E}}
\def\R{{\mathbb R}}
\def\S{{\mathbb S}}
\def\calb{{\mathcal{B}}}
\def\cala{{\mathcal{A}}}
\def\calr{{\mathcal{R}}}
\def\calc{{\mathcal{C}}}
\def\calq{{\mathcal{Q}}}
\def\calk{{\mathcal{K}}}
\def\calf{{\mathcal{F}}}
\def\calv{{\mathcal{V}}}
 \def\1{\mathds{1}}
\newcommand{\stp}{\stackrel{P}{\rightarrow}}
\newcommand{\std}{\stackrel{d}{\rightarrow}}
\newcommand{\stas}{\stackrel{\rm a.s.}{\rightarrow}}
\newcommand{\tto}{{t\to\infty}}
\newcommand{\nto}{{n\to\infty}}
\newcommand{\supthe}{\sup_{\theta \in \Theta}}      
\DeclareMathOperator*{\argminA}{arg\,min} 
\newcommand{\var}{{\rm var}}
\newcommand{\cov}{{\rm cov}}
\newcommand{\acov}{{\rm acov}}
\newcommand{\bsQ}{{\bm{Q}}}
\newcommand{\bsg}{{\bm{G}}}
\newcommand{\df}{{\rm d}}
\newcommand{\monedtwo}{M_{1,d^2}(\R)}
\newcommand{\mdone}{M_{d,1}(\R)}
\newcommand{\mdd}{M_{d,d}(\R)}
\newcommand{\md}{M_{d}(\R)}
\newcommand{\vshalf}{V_{s-}^{1/2}}
\newcommand{\vthalf}{V_{t-}^{1/2}}
\newcommand{\izede}{\int_0^{\Delta}}
\newcommand{\qvl}{[L,L^{\ast}]}
\newcommand{\vspr}{(V_s)_{s \in \R_{+}}}
\newcommand{\yspr}{(Y_s)_{s \in \R_{+}}}
\newcommand{\vtpr}{(V_t)_{t \in \R_{+}}}
\newcommand{\ytpr}{(Y_t)_{t \in \R_{+}}}
\newcommand{\ltpr}{(L_t)_{t \in \R_{+}}}
\newcommand{\partialthetai}{\frac{\partial}{\partial \theta_i}}
\newcommand{\partialtheta}{\frac{\partial}{\partial \theta}}
\newtheorem*{assumptiona}{Assumptions a}
\newtheorem*{assumptionb}{Assumptions b}
\newtheorem*{assumptionc}{Assumption c}
\newtheorem*{assumptiond}{Assumptions d}
\newtheorem*{assumptione}{Assumption e}
\newtheorem*{assumptionf}{Assumptions f}
\newtheorem*{assumptiong}{Assumptions g}
\DeclareMathOperator{\vect}{vec}
\DeclareMathOperator{\vech}{vech}
\newcommand{\Chalmos}{\quad\hfill\mbox{$\Box$}}  
\begin{document}
\maketitle

\begin{abstract}
For the multivariate COGARCH process, we obtain explicit expressions for the second-order structure of the ``squared returns'' process observed on an equidistant grid. Based on this, we present a generalized method of moments estimator for its parameters. Under appropriate moment and strong mixing conditions, we show that the resulting estimator is consistent and asymptotically normal. Sufficient conditions for strong mixing, stationarity and identifiability of the model parameters are discussed in detail. We investigate the finite sample behavior of the estimator in a simulation study.
\end{abstract}

\noindent
{\em AMS 2010 Subject Classifications:} primary: 62M05, 62M10, secondary: 60G51, 91B84.\\
\noindent
{\em Keywords: multivariate continuous time GARCH, second-order moment structure, estimation, generalized method of moments, model identification, L\'{e}vy process}

\section{Introduction}

The modeling of financial data has received much attention over the last decades, where several models have been proposed for capturing its ``stylized facts''.
Prominent models are the class of ARCH (autoregressive conditionally heteroskedastic) and GARCH (generalized ARCH) processes introduced in \cite{Engle-DefinitionARCH-1982, Bollerslev-DefinitionGARCH-1986}. They are able to capture most of these stylized facts of financial data (see \cite{cont2001, guillaume1997bird}). A special feature of GARCH like processes is that they usually exhibit heavy tails even if the driving noise is light tailed, a feature most other stochastic volatility models do not have (\cite{fasen2006extremal}).

In many financial applications, it is most natural to model the price evolution in continuous time, especially when dealing with high-frequency data. The COGARCH process is a natural generalization of the discrete time GARCH process to continuous time. It exhibits many ``stylized features'' of financial time series and is well suited for modeling high-frequency data (see \cite{Bayraci14, Bibbona15, Haug07, Kluppelberg10, Maller08, Muller10}).



In many cases one needs to model the joint price of several financial assets which exhibit a non-trivial dependence structure and therefore, multivariate models are needed.
The MUCOGARCH process introduced in \cite{Stelzer10} is a multivariate extension of the COGARCH process. It combines the features of the continuous time GARCH processes with the ones of the multivariate BEKK GARCH process of \cite{engle1995BEKKGARCH}. It is a $d-$dimensional stochastic process and it is defined as
\begin{equation}\label{eq:def:Pt}
G_t = \int_0^t V_{s-}^{1/2} \diff L_s, \quad t\ge 0,
\end{equation}
where $L$ is an $\R^d$-valued L\'evy process with non-zero L\'{e}vy measure and c\`adl\`ag sample paths. The matrix-valued volatility process $(V_s)_{s \in \R^+}$  depends on a parameter $\theta \in \Theta \subset \R^q$, it is predictable and its randomness depends only on $L$. We assume that we have a sample of size $n$ of the log-price process \eqref{eq:def:Pt} with true parameter $\theta_0 \in \Theta$ observed on a fixed grid of size $\Delta>0$, and compute the log returns 
\begin{equation}\label{eq:def:Gn}
\bsg_i = \int_{(i-1)\Delta}^{i \Delta} V_{s-}^{1/2} \diff L_s, \quad i =1,\dots,n.
\end{equation}
Therefore, an important question is how to estimate the true parameter $\theta_0$ based on observations $(\bsg_i)_{i=1}^n$. In the univariate case, several methods have been proposed to estimate the parameters of the COGARCH process (\cite{Bayraci14, Bibbona15, Thiago1, Haug07, Maller08}). All these methods rely on the fact that the COGARCH process is, under certain regularity conditions, ergodic and strongly mixing. 

In the univariate case, \cite{fasen2010asymptotic} proved geometric ergodicity results for the COGARCH process (in fact, their results apply to a wider class of L\'evy driven models). Recently, \cite{Stelzer17} derived sufficient conditions for the existence of a unique stationary distribution, for the geometric ergodicity, and for the finiteness of moments of the stationary distribution in the MUCOGARCH process. These results imply ergodicity and strong mixing of the log-price process $(\bsg_i)_{i=1}^\infty$, thus paving the way for statistical inference. We will use their results to apply the generalized method of moments (GMM) for estimating the parameters of the MUCOGARCH process. To this end we compute the second-order structure of the squared returns in closed form, under appropriate assumptions.


Consistency and asymptotic normality of the GMM estimator is obtained under standard assumptions of strong mixing, existence of moments of the MUCOGARCH volatility process and model identifiability. Thus we discuss sufficient conditions, easily checkable for given parameter spaces ensuring strong mixing and existence of relevant moments.



The identifiability question is rather delicate, since the formulae for the second-order structure of the log-price returns involve operators which are not invertible and, therefore, the strategy used for showing identifiability as used in the one-dimensional COGARCH process cannot be generalised. In the end we can establish identifiability conditions that are not overly restrictive and easy to use.

Our paper is organized as follows. In Section 2, we fix the notation and briefly introduce L\'{e}vy processes. In Section 3 we define the MUCOGARCH process, and obtain in Section 4 its second-order structure. Section 5 introduces the GMM estimator and discusses sufficient conditions for stationarity, strong mixing and identifiability of the model. In Section~\ref{s:sim}, we study the finite sample behavior of the estimators in a simulation study. Finally, Section~\ref{se:proofs} presents the proofs for the results of Sections~\ref{se:MCOdef} and \ref{se:mFor}.

\section{Preliminaries}

\subsection{Notation}\label{s:notation}
Denote the set of non-negative real numbers by $\R^+$. For $z \in \C, \Re(z)$ and $\Im(z)$ denote the real and imaginary part, respectively. We denote by $M_{m,d}(\R)$, the set of real $m \times d$ matrices and write $M_d(\R)$ for $M_{d,d}(\R)$. The group of invertible $d \times d$ matrices is denoted by $GL_{d}(\R)$, the linear subspace of symmetric matrices by $\S_d$, the (closed) positive semidefinite cone by $\S^+_
d$ and the (open) positive definite cone by $\S^{++}_d$. We write $I_d$ for the $d \times d$ identity matrix. The tensor (Kronecker) product of two matrices $A,B$ is written as $A\otimes B$. The $\vect$ operator denotes the well-known vectorization operator that maps the set of $d\times d$ matrices to $\R^{d^2}$ by stacking the columns of the matrices below one another. Similarly, $\vech$ stacks the entries on and below the main diagonal of a square matrix. For more information regarding the tensor product, $\vect$ and $\vech$ operators we refer to \cite{Bernstein05, horn1991topics}. The spectrum of a square matrix is denoted by $\sigma(\cdot)$. Finally, $A^*$ denotes the transpose of a matrix $A \in M_{m,d}(\R)$ and $A_{(i,j)}$ denotes the entry in the $i$th line and $j$th column of $A$. Arbitrary norms of vectors or matrices are denoted by $\|\cdot\|$ in which case it is irrelevant which particular norm is used. The norm $\|\cdot\|_{2}$ denotes the operator norm on $M_{d^{2}}(\mathbb{R})$ associated with the usual Euclidean norm. The symbol $c$ stands for any positive constant, whose value may change from line to line, but is not of particular interest.

Additionally, we employ an intuitive notation with respect to (stochastic) integration with matrix-valued integrators, referring to any of the standard texts (for example, \cite{Protter90}) for a comprehensive treatment of the theory of stochastic integration. Let $(A_t )_{t\in \R^+}$ in $M_{m,d}(\R)$ and $(B_t )_{t\in \R^+}$ in $M_{r,u}(\R)$ be c\`adl\`ag and adapted processes and $(L_t )_{t\in \R^+}$ in $M_{d,r}(\R)$ be a semimartingale. We then denote by $\int_0^t A_{s-} \diff L_s B_{s-}$ the matrix $C_t \in M_{m,u}(\R)$ which has $ij$-th entry
$\sum_{k=1}^d \sum_{l=1}^{r} \int_{0}^{t} A_{i k, s-} B_{l j, s-} \mathrm{d} L_{k l, s}$. If $(X_t )_{t\in \R^+}$ is a semimartingale in $\R^m$ and $(Y_t )_{t\in \R^+}$ one in $\R^d$, then the quadratic variation $([X,Y]_t )_{t\in \R^+}$ is defined as the finite variation process in $M_{m,d}(\R)$ with $ij$-th entry $[X_i,Y_j ]_t$ for $t \in  \R^+$, $i = 1,\dots,m$ and $j = 1,\dots,d$. We also refer to Lemma~2.2 in \cite{Behme2012} for a collection of basic properties related to integration with matrix-valued integrators. Lastly, let $\bsQ:M_{d^2}(\R) \mapsto M_{d^2}(\R)$ be the linear map defined by
\begin{equation*}\label{eq:defQ}
(\bsQ X)_{(k-1)d+l,(p-1)d + q} = X_{(k-1)d+p,(l-1)d + q} \quad \text{ for all } k,l,p,q = {1,\dots,d},    
\end{equation*}
which has the property that $\bsQ (\vect(X)\vect(Z)^T) = X \otimes Z$ for all $X,Z \in \S_d$ (\cite[Theorem 4.3]{PigorschetStelzer2010MOU}). Let $K_d$ be the commutation matrix characterized by $K_d \vect(A) = \vect(A^*)$ for all $A \in M_d(\R)$ (see \cite{Magnus79Kom} for more details). Define $\calq \in M_{d^4}(\R)$ as the matrix associated with the linear map $\vect \circ \bsQ \circ \vect^{-1}$ on $\R^{d^4}$, and $\calk_d \in M_{d^4}(\R)$ as the matrix associated with the linear map $\vect(K_d \vect^{-1}(x))$ for $x \in \R^{d^4}$.

\subsection{L\'{e}vy processes}
A L\'{e}vy process $L = (L_t)_{t \in \R^+}$ in $\R^d$ is characterized by its characteristic function in L\'{e}vy-Khintchine form $\E e^{i \langle u, L_t \rangle}$ = $\exp\{ t \psi_{L}(u) \}$ for $t \in \R^{+}$ with
$$
\psi_{L}(u)=i \langle\gamma_{L}, u \rangle-\frac{1}{2} \langle u, \Gamma_{L} u \rangle+\int_{\mathbb{R}^{d}} \big(e^{i \langle u, x \rangle}-1-i\langle u, x\rangle I_{[0,1]}(\|x\|)\big) \nu_{L}(d x), \quad u \in \mathbb{R}^{d},
$$
where $\gamma_{L} \in \mathbb{R}^{d}, \Gamma_{L} \in \mathbb{S}_{d}^{+}$ and the L\'{e}vy measure $\nu_{L}$ is a non-zero measure on $\mathbb{R}^{d}$  satisfying $\nu_{L}(\{0\})=0$ and $\int_{\mathbb{R}^{d}}\left(\|x\|^{2} \wedge 1\right) \nu_{L}(d x)<\infty$. We assume w.l.o.g. $L$ to have c\`adl\`ag paths. The discontinuous part of the quadratic variation of $L$ is denoted by $([L,L]_t^{\disc})_{t \in \R^+}$ and it is also a L\'{e}vy process. It has finite variation, zero drift and L\'{e}vy measure $\nu_{[L, L]^{\disc}}(B)=\int_{\mathbb{R}^{d}} I_{B}\left(x x^{*}\right) \nu_{L}(d x)$ for all Borel sets $B \subseteq \mathbb{S}_{d}$. For more details on L\'{e}vy processes we refer to \cite{Applebaum09, Sato99}.

\section{The MUCOGARCH process}\label{se:MCOdef}

%
%

Throughout, we assume that all random variables and processes are defined on a given filtered probability space $(\Omega,\calf,P,(\calf_t )_{t\in T})$, with $T=\N$ in the discrete-time case and $T=\R^+$ in the continuous-time one. In the continuous-time setting, we assume the usual conditions (complete, right-continuous filtration) to be satisfied. We can now recall the definition of the MUCOGARCH process.


\begin{definition}[{MUCOGARCH(1,1) - \cite[Definition~3.1]{Stelzer10}}]\label{df:MUCOG} Let $L$ be an $\R^d$-valued L\'{e}vy process, $A,B \in M_d(\R)$ and $C \in \S_d^{++}$. The process $G = (G_t)_{t \in \R^+}$ solving
\beam
\diff G_t & = &  V_{t-}^{1/2} \diff L_t\label{eq:dGt}\\
V_t & = &  C + Y_t \label{eq:Vt}\\
\diff Y_t & = &  (B Y_{t-} + Y_{t-}B^\ast)\diff t + A V_{t-}^{1/2} \diff[L,L]_t^{\disc} V_{t-}^{1/2} A^\ast \label{eq:dYt}
\eeam
with initial values $G_0$ in $\R^d$ and $Y_0$ in $\mathbb{S}_d^{+}(\R)$ is called a MUCOGARCH(1,1) process. The process $Y = (Y_t)_{t \in \R^+}$ is called a MUCOGARCH(1,1) volatility process. Hereafter we will always write MUCOGARCH for short.
\end{definition}

The interpretation of the model parameters $B$ and $C$ is the following. If $\sigma(B) \in \{ z \in \C: \Re(z)<0 \}$, the process $V$, as long as no jump occurs, ``mean reverts'' to the level $C$ at matrix exponential rate given by $B$. Since all jumps are positive semidefinite, $C$ is not a mean level but, instead, a lower bound for $V$.

By \cite[Theorems~3.2 and 4.4]{Stelzer10}, the MUCOGARCH process is well-defined, the solution $(Y_t )_{t\in \R^+}$ is locally bounded and of finite variation. Additionally, the process $(G_t,Y_t )_{t\in \R^+}$ and its volatility process $(Y_t )_{t\in \R^+}$ are time homogeneous strong Markov processes on $\R^d \times \S^+_d$ and $\S^+_d$, respectively.

Since the price process $(G_t)_{t \in \R^+}$ in \eqref{eq:dGt} is defined in terms of the L\'evy process $L$ and $\ytpr$, the existence of its moments is closely related to the existence of moments of $L$ and the stationary distribution of $\ytpr$.

\begin{lemma}\label{le:momG1}
Suppose that $\E \|Y_0\|^p < \infty$ and $\E \|L_1\|^{2p} < \infty$ for some $p \geq 1$. Then:
\begin{itemize}
\item[(a)] $\E \|Y_t\|^p < \infty$ for all $t \in \R^+$ and $t \mapsto \E \|Y_t\|^p$ is locally bounded.
\item[(b)] $\E \|G_t\|^{2p} < \infty$ for all $t \in \R^+$ and $t \mapsto \E \|G_t\|^{2p}$ is locally bounded.
\end{itemize}
\end{lemma}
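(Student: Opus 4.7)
The plan is to prove (a) by applying It\^o's formula to $f_p(Y):=(1+\mathrm{tr}(Y))^p$ and establishing a Gronwall-type inequality, then deduce (b) from (a) using a Burkholder--Davis--Gundy-type estimate for stochastic integrals against L\'evy processes.

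For part (a), since $Y_t\in\S_d^+$, all matrix norms are equivalent to $\mathrm{tr}(Y_t)$, so bounding $\E f_p(Y_t)$ suffices. The SDE \eqref{eq:dYt} has drift $BY_{t-}+Y_{t-}B^*$ and a pure-jump part with jumps $\Delta Y_s = AV_{s-}^{1/2}\Delta L_s(\Delta L_s)^*V_{s-}^{1/2}A^*$. Writing out the generator $\cala$ of the Markov process $Y$ applied to $f_p$ and using $V_{s-}=C+Y_{s-}$ together with the inequalities
\begin{equation*}
\bigl|\mathrm{tr}(BY+YB^*)\bigr|\leq 2\|B\|\,\mathrm{tr}(Y), \qquad \mathrm{tr}(\Delta Y_s)\leq \|A\|^2(\|C\|+\mathrm{tr}(Y_{s-}))\|\Delta L_s\|^2,
\end{equation*}
the drift contribution to $\cala f_p(Y)$ is controlled by $2p\|B\|(1+\mathrm{tr}(Y))^p$; for the jump contribution, the mean value theorem gives $f_p(Y+H)-f_p(Y) \leq p(1+\mathrm{tr}(Y)+\mathrm{tr}(H))^{p-1}\mathrm{tr}(H)$, and the standard split $\int = \int_{\|x\|\leq 1} + \int_{\|x\|>1}$ of the L\'evy measure, combined with $\int_{\|x\|>1}\|x\|^{2p}\nu_L(dx)<\infty$ (which follows from $\E\|L_1\|^{2p}<\infty$), yields the key estimate $\cala f_p(Y) \leq c_1(1+f_p(Y))$. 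Localising by $\tau_n:=\inf\{t:\|Y_t\|>n\}$ (a.s.\ finite with $\tau_n\uparrow\infty$ by the locally bounded paths of $Y$), Dynkin's formula and Gronwall's lemma applied to $\E f_p(Y_{t\wedge\tau_n})$ yield a uniform-in-$n$ local bound; Fatou then gives (a).

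For part (b), apply a Kunita/BDG-type inequality to $G_t=\int_0^t V_{s-}^{1/2}\,dL_s$; after decomposing $L$ into its drift, Gaussian, compensated small-jump and large-jump parts and invoking the standard BDG estimate for each (see, e.g., \cite{Applebaum09}), one obtains for $p\geq 1$
\begin{equation*}
\E\|G_t\|^{2p} \leq C_p\!\left[t^{p-1}\!\int_0^t\!\E\|V_{s-}\|^{p}\,ds + \Bigl(\int_0^t\!\E\|V_{s-}\|\,ds\Bigr)^p\right]\bigl(1+\E\|L_1\|^{2p}\bigr).
\end{equation*}
Since $\|V_{s-}\|^p\leq c_2(1+\|Y_{s-}\|^p)$, part (a) ensures the right-hand side is finite and locally bounded in $t$, giving (b).

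The main obstacle is the generator bound $\cala f_p(Y)\leq c_1(1+f_p(Y))$ for non-integer $p$: the mean value estimate introduces a $(p-1)$-th power of a mixed quantity that must be bounded without producing a term growing faster than $f_p(Y)$, which is precisely where the splitting of $\nu_L$ and the full strength of the moment condition $\E\|L_1\|^{2p}<\infty$ enter.
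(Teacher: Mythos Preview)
Your argument is essentially correct, but the paper takes a much shorter route. For part (a), the paper simply cites \cite[Proposition~4.7]{Stelzer10}, which already establishes that $\E\|Y_t\|^p<\infty$ and $t\mapsto\E\|Y_t\|^p$ is locally bounded under the stated hypotheses; what you have sketched via the generator bound, localisation, Dynkin and Gronwall is effectively a reproof of that cited proposition. For part (b), the paper invokes \cite[Theorem~66, Ch.~5]{Protter90} (Emery's inequality for semimartingales) to get directly $\E\|G_t\|^{2p}\leq c\int_0^t\E\|V_{s-}^{1/2}\|^{2p}\,\diff s\leq c\int_0^t\E\|C+Y_{s-}\|^p\,\diff s$, bypassing the decomposition of $L$ into drift, Gaussian and jump parts that you carry out by hand. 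Your more explicit approach buys self-containedness and makes the role of the moment condition $\E\|L_1\|^{2p}<\infty$ transparent; the paper's approach buys brevity by delegating both steps to known results. One small slip: your stopping times $\tau_n$ are not ``a.s.\ finite'' --- rather $\tau_n\uparrow\infty$ a.s., so eventually $\tau_n=\infty$; this does not affect the argument.
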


\section{Second-order structure of ``squared returns''}\label{se:mFor}


In this section, we derive the second-order structure of the MUCOGARCH ``squared returns'' process $(\bsg_i\bsg_i^\ast)_{i \in \N}$ defined in terms of \eqref{eq:def:Gn}, which will be used in Section~\ref{se:GMM} to estimate the parameters $A,B$ and $C$ of the MUCOGARCH process. The proofs are postponed to Section~\ref{se:proofs}. We group the needed assumptions as follows.





\begin{assumptiona}[L\'{e}vy process]
\mbox{}
\begin{enumerate}[label=$(a.\arabic*)$]

\item \label{as:Elzero} $\E L_1 = 0$.

\item \label{as:varL1_id} $\var (L_1) = (\sigma_W + \sigma_L ) I_d$, with $\sigma_W \geq 0$ and $\sigma_L > 0$.

\item \label{as:lp_qv}
\begin{equation*}
 \int_{\R^d}x_ix_jx_k \, \nu_L(dx)=0, \quad \text{for all }  i,j,k \in \{1,\dots,d\}.
\end{equation*}

\item \label{as:El14fin} $\E \|L_1\|^4 < \infty$.

\item \label{as:Equadquad} There exists a constant $\rho_L > 0$ such that
$$
\E [ \vect(\qvl^\disc), \vect(\qvl^\disc)^{\ast} ]_1^\disc =  \rho_L ( I_{d^2} + K_{d} + \vect(I_d) \vect(I_d)^{\ast} ).
$$

\item \label{as:El18fin} $\E \|L_1\|^8 < \infty$.

\end{enumerate}
\end{assumptiona}

\begin{assumptionb}[Parameters]
\mbox{}
\begin{enumerate}[label=$(b.\arabic*)$]
\item  $A \in GL_d(\R)$.

\item \label{as:BCcurlInv} The matrices $\calb$ and $\calc$ defined below satisfy $\sigma(\calb), \sigma(\calc) \in \{ z \in \C: \Re(z)<0 \}$.
\beam
   \calb & :=  & B\otimes I + I \otimes B + \sigma_L(A \otimes A) \label{eq:def:Calb} \\
    \calc &  :=  &\calb \otimes I_{d^2} +  I_{d^2}\otimes\calb + \cala \calr \nonumber,
\eeam
where $\cala = (A \otimes A) \otimes (A \otimes A)$, $\calr = \rho_L (\calq + \calk_d \calq + I_{d^4})$, and $\calk_d$ and $\calq$ as in Section~\ref{s:notation}.


\end{enumerate}
\end{assumptionb}

\begin{assumptionc}[MUCOGARCH volatility]
\mbox{}
\begin{enumerate}[label=$(c.\arabic*)$]

\item\label{as:YtsecSt} $\ytpr$ is a second-order stationary MUCOGARCH volatility process.

\item\label{as:Y0E4} $\ytpr$ is a stationary MUCOGARCH volatility process and its stationary distribution satisfies $\E \|Y_0\|^4 < \infty$.
\end{enumerate}
\end{assumptionc}

Sufficient conditions for Assumption \textbf{c} are given in \cite[Theorem~4.5]{Stelzer10}. Note that \ref{as:Y0E4} implies \ref{as:YtsecSt}. We recall now the expressions for the second-order structure of the process $Y$ and of the log-price returns process $(\bsg_i)_{i \in \N}$. First, for a second-order stationary $\R^d$-valued process, its autocovariance function $\acov_X:\R \mapsto M_d(\R)$ is denoted by $\operatorname{acov}_{X}(h)=\operatorname{cov}\left(X_{h}, X_{0}\right)=\E\left(X_{h} X_{0}^{*}\right)-\E\left(X_{0}\right) E\left(X_{0}\right)^{*}$ for $h \geq 0$ and by $\operatorname{acov}_{X}(h)=\left(\operatorname{acov}_{X}(-h)\right)^{*}$ for $h < 0$. For matrix-valued processes $(Z_t)_{t \in \R}$, we set 
$\acov_Z = \acov_{\vect(Z)}$. 

\begin{proposition}[{\cite[Theorems~4.8, 4.11, Corollary~4.19 and Proposition~5.2]{Stelzer10}}]\label{pr:SecStrV0}
If Assumptions \ref{as:Elzero}-\ref{as:Equadquad}, \ref{as:BCcurlInv} and \ref{as:YtsecSt} hold, then
\beam
\E(\vect(Y_0)) &=& -\sigma_L \calb^{-1}(A \otimes A )\vect(C)\label{eq:EV}\\
\var(\vect(Y_0))  &=&  \var(\vect(V_0)) = -\calc^{-1}\big[\big(\sigma_L^2 \calc(\calb^{-1}\otimes \calb^{-1})\cala + \cala \calr\big)(\vect(C)\otimes \vect(C)) \nonumber \\
& & \quad\quad + \big(\sigma_L(A \otimes A) \otimes I_{d^2} + \cala  \calr\big)\vect(C) \otimes \E (\vect(Y_0)) \nonumber \\
& & \quad\quad + \big(\sigma_L I_{d^2} \otimes (A \otimes A)  + \cala  \calr\big) \E (\vect(Y_0)) \otimes \vect(C) \big]\nonumber\\
\acov_Y(h) &=& \acov_V(h) = e^{\calb h} \var(\vect(Y_0))\nonumber \\
\E(\bsg_1) &=& 0\nonumber\\
\var(\bsg_1) &=& (\sigma_L+\sigma_W) \Delta \E (C + Y_0)\label{eq:EG1}\\
\acov_\bsg(h) &=& 0 \quad \text{for all }h \in \Z\backslash \{0\}\nonumber.
\eeam
\end{proposition}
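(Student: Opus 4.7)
Each identity is contained in \cite[Theorems~4.8, 4.11, Corollary~4.19 and Proposition~5.2]{Stelzer10}, so the proof reduces to a citation; nevertheless the derivations follow a common scheme that is worth outlining. For the mean, I would apply the $\vect$ operator to the SDE \eqref{eq:dYt}, using $\vect(BY)=(I\otimes B)\vect(Y)$ and $\vect(YB^*)=(B\otimes I)\vect(Y)$, and observe that by \ref{as:varL1_id} the predictable compensator of $d[L,L]_t^{\disc}$ is $\sigma_L I_d\,dt$ (the Gaussian part contributes $\sigma_W$ and the pure jump part $\sigma_L$). Taking expectations yields
\begin{equation*}
\frac{d}{dt}\E\vect(Y_t)=\calb\,\E\vect(Y_t)+\sigma_L(A\otimes A)\vect(C),
\end{equation*}
and second-order stationarity \ref{as:YtsecSt} forces the left-hand side to vanish. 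Invertibility of $\calb$ under \ref{as:BCcurlInv} then gives \eqref{eq:EV}.

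For the variance I would apply It\^o's formula to $Y_t\otimes Y_t$ (equivalently $\vect(Y_t)\vect(Y_t)^*$). The resulting drift splits into three parts: the continuous drift $(BY+YB^*)\otimes Y+Y\otimes(BY+YB^*)$, which after vectorisation contributes $\calb\otimes I_{d^2}+I_{d^2}\otimes\calb$ applied to $\vect(Y)\otimes\vect(Y)$; the pure-jump quadratic variation of the jump integral with itself, which is where Assumption~\ref{as:Equadquad} enters and, together with the operator identity $\bsQ(\vect(X)\vect(Z)^T)=X\otimes Z$ and the representation of $\calq,\calk_d$ from Section~\ref{s:notation}, yields $\cala\calr$ acting on $\vect(V)\otimes\vect(V)$; and cross terms between the continuous drift of $Y$ and the jump increments of the other factor, which produce the remaining two summands in the formula. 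Taking expectations, substituting $V_{t-}=C+Y_{t-}$, and setting the time derivative to zero by \ref{as:YtsecSt} yields a linear equation for $\var(\vect(Y_0))$ with coefficient matrix $\calc$, invertible under \ref{as:BCcurlInv}. The autocovariance of $Y$ then follows from the Markov property: iterating the mean computation conditionally on $\calf_t$ gives $\E[\vect(Y_{t+h})\mid\calf_t]=e^{\calb h}\vect(Y_t)+(I_{d^2}-e^{\calb h})\E\vect(Y_0)$, from which $\acov_Y(h)=e^{\calb h}\var(\vect(Y_0))$ is a routine consequence.

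For the return statements, \ref{as:Elzero}, \ref{as:El14fin} and Lemma~\ref{le:momG1}(b) imply that $G$ is a square-integrable martingale, so $\E\bsg_1=0$ is immediate and $\acov_\bsg(h)=0$ for $h\in\Z\setminus\{0\}$ follows by conditioning on $\calf_{i\Delta}$ and applying the tower property. The variance comes from the It\^o isometry applied componentwise: using $\langle L_k,L_l\rangle_s=(\sigma_W+\sigma_L)\delta_{kl}s$ (a consequence of \ref{as:varL1_id}) together with stationarity of $V$,
\begin{equation*}
\E[\bsg_1\bsg_1^*]=(\sigma_W+\sigma_L)\int_0^\Delta \E V_{s-}\,ds=(\sigma_W+\sigma_L)\Delta\,\E(C+Y_0).
\end{equation*}

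The main technical obstacle is the variance formula for $\vect(Y_0)$: one must carefully unfold how the operators $\calq$ and $\calk_d$ emerge when rewriting the expected second-order jump contribution from Assumption~\ref{as:Equadquad} into the vectorised form that exhibits $\calc$ as the governing generator. Once this identification is made, everything else follows by stationarity plus standard martingale computations.
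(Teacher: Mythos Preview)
Your proposal is correct and aligns with the paper's treatment: the proposition is stated without proof there, relying entirely on the citation to \cite[Theorems~4.8, 4.11, Corollary~4.19 and Proposition~5.2]{Stelzer10}, and the sketch you give of the underlying arguments (vectorising the SDE, compensating the jump part, setting time derivatives to zero under stationarity, and using the It\^o isometry for the returns) is exactly the route taken in that reference.
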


Based on Lemma~\ref{le:momG1} and Proposition~\ref{pr:SecStrV0}, we obtain now the second-order properties of the MUCOGARCH process.

\begin{lemma}\label{le:acovgg}
If Assumptions \textbf{a},\textbf{b} and \textbf{c} hold, then

\begin{equation}\label{eq:acvgg}
\begin{split}
\acov_{\bm{G}\bm{G}^{\ast}}(h) & = e^{\calb \Delta h} \calb^{-1}(I_{d^2} - e^{-\calb \Delta})(\sigma_L + \sigma_W) \var (\vect (V_0))  \\
& \quad\quad \times  (e^{\calb^{\ast}\Delta} - I_{d^2})[ (\sigma_W + \sigma_L)(\calb^{\ast})^{-1} - 2((A \otimes A)^{\ast})^{-1} ], \quad h \in \N,
\end{split}
\end{equation}
\beam
&  & \E \vect(\bm{G}_1 \bm{G}_1^{\ast}) \vect(\bm{G}_1 \bm{G}_1^{\ast})^{\ast} \label{eq:vg1g1}\\ 
&  & = \Delta \rho_L  \big( (\bsQ +K_{d} \bsQ  +  I_{d^2})(\E \vect(V_0) \vect(V_0)^{\ast})\big) \times \nonumber\\
& &  \,\,\,\,\,\, (I_{d^2} + K_d) \bsQ (D^\ast) (I_{d^2} + K_d) + D + D^\ast \nonumber,
\eeam
with,
\begin{equation}\label{eq:E}
D:= (\sigma_L + \sigma_W) \Big( \frac{1}{2}(\sigma_L + \sigma_W)\Delta^2\E \vect(V_0)   \E \vect( V_0 )^{\ast}  +  \var (\vect (V_0)) \tilde{\calb} \Big)
\end{equation}
\begin{equation}\label{eq:Btilde}
\tilde{\calb}:=    \big[ (\calb^{\ast})^{-1}( e^{\calb^{\ast}\Delta} - I_{d^2}) - I_{d^2}\Delta \big] \big[ (\sigma_W + \sigma_L)(\calb^{\ast})^{-1} - 2((A \otimes A)^{\ast})^{-1}  \big]
\end{equation}
\end{lemma}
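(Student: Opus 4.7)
The plan is to start from the matrix integration-by-parts formula for $\bsg_i \bsg_i^*$. Setting $\tilde \bsg_s^{(i)} := \int_{(i-1)\Delta}^s V_{r-}^{1/2}\df L_r$ for $s\in[(i-1)\Delta, i\Delta]$, we obtain
\begin{equation*}
\bsg_i \bsg_i^* = \int_{(i-1)\Delta}^{i\Delta} V_{s-}^{1/2} \df L_s (\tilde\bsg_{s-}^{(i)})^* + \int_{(i-1)\Delta}^{i\Delta} \tilde\bsg_{s-}^{(i)} \df L_s^* V_{s-}^{1/2} + \int_{(i-1)\Delta}^{i\Delta} V_{s-}^{1/2} \df[L,L]_s V_{s-}^{1/2}.
\end{equation*}
The first two integrals are local martingales; by Lemma~\ref{le:momG1} they are true martingales under the moment conditions of Assumptions~\textbf{a} and~\textbf{c}. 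Taking conditional expectation given $\calf_{(i-1)\Delta}$ and using $\E \df[L,L]_s = (\sigma_W + \sigma_L) I_d\, \df s$ then gives
\begin{equation*}
\E[\vect(\bsg_i \bsg_i^*) \mid \calf_{(i-1)\Delta}] = (\sigma_W + \sigma_L)\int_{(i-1)\Delta}^{i\Delta} \E[\vect(V_s) \mid \calf_{(i-1)\Delta}]\, \df s.
\end{equation*}
Applying $\vect$ to \eqref{eq:dYt} and taking conditional expectations, $s\mapsto \E[\vect(Y_s)\mid \calf_{(i-1)\Delta}]$ solves a linear inhomogeneous ODE with coefficient $\calb$, whose explicit solution is an affine function of $\vect(Y_{(i-1)\Delta})$ with the variation-of-constants term $\calb^{-1}(e^{\calb(s-(i-1)\Delta)} - I_{d^2})\sigma_L(A\otimes A)\vect(C)$.

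For \eqref{eq:acvgg}, the tower property yields $\acov_{\bsg\bsg^*}(h) = \cov(\E[\vect(\bsg_{h+1}\bsg_{h+1}^*)\mid \calf_{h\Delta}], \vect(\bsg_1\bsg_1^*))$. Inserting the ODE solution and performing the integration in $s\in[h\Delta,(h+1)\Delta]$ reduces this to $(\sigma_W+\sigma_L)\calb^{-1}(e^{\calb \Delta}-I_{d^2})\,\cov(\vect(Y_{h\Delta}),\vect(\bsg_1\bsg_1^*))$. Iterating the same conditional-expectation argument on $\vect(Y_{h\Delta})$ relative to $\calf_\Delta$ introduces an extra factor $e^{\calb(h-1)\Delta}$, so that the prefactor combines into $e^{\calb \Delta h}(\sigma_W+\sigma_L)\calb^{-1}(I_{d^2}-e^{-\calb\Delta})$, isolating the $h$-dependence. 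The remaining $\cov(\vect(Y_\Delta),\vect(\bsg_1\bsg_1^*))$ is evaluated by applying the same It\^o decomposition to $\bsg_1\bsg_1^*$ and pairing each piece against $\vect(Y_\Delta)$: the martingale integrals contribute through cross-variation with $L$, and the $[L,L]$-integral contributes via $\var(\vect(V_0))$ from Proposition~\ref{pr:SecStrV0}. Assembling these produces the trailing factor $(e^{\calb^*\Delta}-I_{d^2})[(\sigma_W+\sigma_L)(\calb^*)^{-1} - 2((A\otimes A)^*)^{-1}]$ in \eqref{eq:acvgg}.

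For \eqref{eq:vg1g1}, I decompose $\bsg_1\bsg_1^* = N^{(1)} + N^{(2)} + M$, where $N^{(1)},N^{(2)}$ are the two martingale integrals and $M = \int_0^\Delta V_{s-}^{1/2}\df[L,L]_s V_{s-}^{1/2}$, and expand $\E\vect(\bsg_1\bsg_1^*)\vect(\bsg_1\bsg_1^*)^*$ into the nine cross-terms. The main term is $\E\vect(M)\vect(M)^*$: writing $\vect(V_{s-}^{1/2}\df[L,L]_s^{\disc} V_{s-}^{1/2}) = (V_{s-}^{1/2}\otimes V_{s-}^{1/2})\vect(\df[L,L]_s^{\disc})$ and invoking Assumption~\ref{as:Equadquad}, together with the identity $\bsQ(\vect(X)\vect(Z)^*) = X\otimes Z$ for $X,Z\in\S_d$, yields the combination $\rho_L(\bsQ + K_d\bsQ + I_{d^2})$ acting on $\E\vect(V_0)\vect(V_0)^*$ after using stationarity~\ref{as:YtsecSt}, while the continuous part $\Gamma_L \df s$ of $\df[L,L]_s$ supplies the deterministic piece $\tfrac12(\sigma_W+\sigma_L)^2\Delta^2\,\E\vect(V_0)\E\vect(V_0)^*$ inside $D$. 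The martingale-martingale diagonal $\E\vect(N^{(i)})\vect(N^{(j)})^*$ is computed through the matrix It\^o isometry; after using the ODE for $\E\vect(V)$ to evaluate the expectation of $\tilde\bsg^{(1)}_{s-}(\tilde\bsg^{(1)}_{s-})^*$ and integrating $e^{\calb^* s}-I_{d^2}$ over $[0,\Delta]$, one recovers exactly $\tilde\calb$ of \eqref{eq:Btilde}, giving the $\var(\vect(V_0))\tilde\calb$ contribution to $D$. The cross-terms $\E\vect(N^{(i)})\vect(M)^*$ are controlled via Assumption~\ref{as:lp_qv}: the vanishing of the third-order L\'evy moments forces them to collapse into contributions already identified. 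Finally, symmetrization through $(I_{d^2}+K_d)$ gathers everything into \eqref{eq:vg1g1}.

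The main obstacle is not the stochastic analysis, which is routine given Lemma~\ref{le:momG1} and the martingale properties, but the tensor-algebraic bookkeeping: one must systematically apply identities such as $\vect(ABC) = (C^*\otimes A)\vect(B)$, the commutation relation $K_d(X\otimes Y) = (Y\otimes X)K_d$, and the defining property of $\bsQ$ in order to channel the three symmetry contributions from Assumption~\ref{as:Equadquad} into the compact form $(I_{d^2}+K_d)\bsQ(D^*)(I_{d^2}+K_d)$ and to verify that the many intermediate expressions collapse correctly into the stated matrices $D$ and $\tilde\calb$.
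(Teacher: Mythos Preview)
Your scaffold for \eqref{eq:acvgg} --- integration by parts on $\bsg_i\bsg_i^*$, the tower property, and the reduction to $\cov(\vect(Y_\Delta),\vect(\bsg_1\bsg_1^*))$ --- is exactly the paper's route. But your description of how that last covariance is evaluated contains a genuine gap. The martingale pieces $N^{(1)},N^{(2)}$ do \emph{not} contribute: the cross-variation $[\vect(Y),\vect(N^{(1)})^*]_\Delta$ has zero mean by~\ref{as:lp_qv}, and the remaining term $l_s:=\E\vect(Y_s)\vect(N^{(1)}_s)^*$ solves the linear equation $l_s=\calb\int_0^s l_u\,\df u$ with $l_0=0$, hence vanishes identically. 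The entire trailing factor comes from pairing $\vect(Y_\Delta)$ with the $[L,L]$-integral, and the device that produces the $-2((A\otimes A)^*)^{-1}$ term --- which you never explain --- is to invert the volatility SDE \eqref{eq:dYt}:
\[
\int_0^\Delta (V_{s-}^{1/2}\!\otimes V_{s-}^{1/2})\,\df\vect([L,L]^\disc_s)
=(A\otimes A)^{-1}\Bigl[\vect(Y_\Delta)-\vect(Y_0)-(B\otimes I+I\otimes B)\!\int_0^\Delta\!\vect(Y_{s-})\,\df s\Bigr].
\]
This substitution reduces everything to $\acov_Y$ from Proposition~\ref{pr:SecStrV0} and is the sole source of the $(A\otimes A)^{-1}$ factor. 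Without it you would face the covariation $[\vect(Y),\vect(M)^*]$, which drags in $\rho_L$ and second moments of $V$ and does not collapse to the stated expression.

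For \eqref{eq:vg1g1}, the paper does not expand the square into nine $\vect(N^{(i)})\vect(X^{(j)})^*$ blocks; it applies It\^o's product rule once to $a_\Delta a_\Delta^*$ with $a_s=\vect(G_sG_s^*)$, so that $D$ is \emph{by definition} $\E\int_0^\Delta\df a_s\,a_{s-}^*$, and then splits only $[a,a^*]_\Delta$ into four pieces (two vanishing by~\ref{as:lp_qv}). Your nine-term route can be made to work, but your claimed computation of $\E\vect(N^{(1)})\vect(N^{(1)})^*$ is wrong: the It\^o isometry gives $(\sigma_W+\sigma_L)\int_0^\Delta\E[(G_{s-}G_{s-}^*)\otimes V_{s-}]\,\df s$, a \emph{joint} moment, not something obtainable from the ODE for $\E\vect(V)$ and $\E\tilde\bsg_{s-}\tilde\bsg_{s-}^{*}$ separately. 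Via the $\bsQ$-identity this joint moment equals $\bsQ\bigl(\E\vect(G_sG_s^*)\vect(V_s)^*\bigr)$, i.e.\ precisely the covariance of Lemma~\ref{le:covYGG} again. So both formulae hinge on one auxiliary computation --- $\cov(\vect(Y_s),\vect(G_sG_s^*))$ --- and the missing ingredient in your proposal is the inverted-SDE representation above that makes that computation tractable.
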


\begin{remark}
If the L\'evy process $L$ has paths of finite variation, then Lemma~\ref{le:acovgg} holds without the moment assumptions \ref{as:El18fin} and \ref{as:Y0E4}. This is because expectations involving stochastic integrals with finite variation L\'evy integrators can be computed by using the compensation formula (see Remark \ref{re:new}). In the following, we will define the moment based estimator for MUCOGARCH processes driven by general L\'evy processes (without path restrictions). Only in Section~\ref{se:areEA} we will give a consistency result that distinguishes between L\'evy process with paths of finite and infinite variation.
\end{remark}
Next, we define an estimator for the parameters $A,B$ and $C$, which basically consists of comparing the sample moments to the model moments.



\section{Moment based estimation of the MUCOGARCH process}\label{se:GMM}

In this section, we consider the matrices $A_\theta,B_\theta \in M_{d}(\R)$ and $C_\theta \in \mathbb{S}_{d}^{++}$ from Definition~\ref{df:MUCOG} as depending on a parameter $\theta \in \Theta \subset \R^q$ for $q \in \N$.

The data used for estimation is an equidistant sample of $d$-dimensional log-prices $(\bsg_i)_{i = 1}^n$ as defined in \eqref{eq:def:Gn} with true parameter $\theta_0 \in \Theta$. We assume that the true $\sigma_L, \sigma_W$ and $\rho_L$ as used in Assumptions~\ref{as:varL1_id} and \ref{as:Equadquad} are known. These assumptions are not very restrictive and are comparable to assuming iid standard normal noise in the discrete time multivariate GARCH process, which is very common \cite[eq. (10.6)]{Francq11}.

\subsection{Generalized Method of Moments (GMM) estimator}\label{se:gmm}

In order to estimate the parameter $\theta_0 \in \Theta$, we compare the sample moments (based on a sample of log-prices) to the model moments (based on the expressions \eqref{eq:EG1}, \eqref{eq:acvgg} and \eqref{eq:vg1g1}, provided they are well defined). More specifically, based on the observations $(\bsg_i)_{i = 1}^n$ and a fixed $r < n$, the sample moments are defined as
\begin{equation}\label{eq:defkn}
\hat{k}_{n,r} =  \frac{1}{n} \sum_{i=1}^{n-r} D_i =  \frac{1}{n} \sum_{i=1}^{n-r}
\begin{pmatrix}
\vect(\bm{G}_i \bm{G}_i^{\ast}) \\
\vect(\vect(\bm{G}_i \bm{G}_i^{\ast})\vect(\bm{G}_{i} \bm{G}_{i}^{\ast})^*) \\
\vdots \\
\vect(\vect(\bm{G}_i \bm{G}_i^{\ast})\vect(\bm{G}_{i+r} \bm{G}_{i+r}^{\ast})^*)
\end{pmatrix}.
\end{equation}
The used number of lags of the true autocovariance function $r$ needs to be chosen in such a way that the model parameters are identifiable and also to ensure a good fit of the autocovariance structure to the data. For each $\theta \in \Theta$, let 
\begin{equation}\label{eq:defEk1}
k_{\theta,r} = 
\begin{pmatrix}
\E_\theta \vect(\bm{G}_1 \bm{G}_1^{\ast}) \\
\E_\theta \vect(\vect(\bm{G}_1 \bm{G}_1^{\ast})\vect(\bm{G}_1 \bm{G}_1^{\ast})^*)  \\
\vdots \\
\E_\theta \vect(\vect(\bm{G}_1 \bm{G}_1^{\ast})\vect(\bm{G}_{1+r} \bm{G}_{1+r}^{\ast})^*) 
\end{pmatrix},
\end{equation}
where the expectations are explicitly given by \eqref{eq:EG1}, \eqref{eq:acvgg} and \eqref{eq:vg1g1} by replacing $A,B$ and $C$ by $A_\theta,B_\theta$ and $C_\theta$, respectively. Then, the GMM estimator of $\theta_0$ is given by
\begin{equation}\label{eq:defGMM}
\hat{\theta}_n = \argminA_{\theta \in \Theta} \Big\{ (\hat{k}_{n,r} - k_{\theta,r})^T\Omega (\hat{k}_{n,r} - k_{\theta,r}) \Big\},
\end{equation}
where $\Omega$ is a positive definite weight matrix. The matrix $\Omega$ may be depend on the data but should converge in probability to a positive definite matrix of constants.

\subsection{Asymptotic properties: general case}\label{se:apgmmG}

Additionally to Assumptions~\textbf{a}, \textbf{b} and \textbf{c} we need assumptions for proving consistency and asymptotic normality of $\hat{\theta}_n$. These are mainly related to identifiability of the model parameters, stationarity, strong mixing and existence of certain moments of $(\bsg_i)_{i \in \N}$.

\begin{assumptiond}[Parameter space and log-price process]
\mbox{}
\begin{enumerate}[label=$(d.\arabic*)$]

\item \label{as:comp} The parameter space $\Theta$ is a compact subset of $\R^q$.

\item \label{as:theInt} The true parameter $\theta_0$ lies in the interior of $\Theta$.

\item \label{as:ident}[Identifiability]. Let $r > 1$ be fixed. For any $\theta \not= \tilde{\theta} \in \Theta$ we have $k_{\theta,r} \not= k_{\tilde{\theta},r}$.

\item \label{as:diffABC} The map $\theta \mapsto (A_\theta,B_\theta,C_\theta)$ is continuously differentiable.

\item \label{as:mixing} The sequence $(\bsg_i)_{i \in \N}$ is strictly stationary and exponentially $\alpha$-mixing.

\end{enumerate}

\end{assumptiond}

\begin{assumptione}[Moments]
\mbox{}
\begin{enumerate}[label=$(e.\arabic*)$]
\item \label{as:momANC} There exists a positive constant $\delta > 0$ such that $\E \|\bsg_1\|^{8 + \delta} < \infty$.

\end{enumerate}

\end{assumptione}

Assumption~\textbf{e} can be written in terms of moments of $L$ and $Y_0$ (see Lemma~\ref{le:momG1}). We are now ready to state the strong consistency of the empirical moments in \eqref{eq:defkn}. 
\begin{lemma}\label{le:ConsM}
If Assumptions \textbf{a}, \textbf{b}, \textbf{c} and \ref{as:mixing} hold, then $\hat{k}_{n,r} \stas k_{\theta_0}$ as $\nto$.
\end{lemma}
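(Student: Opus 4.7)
The plan is to reduce the statement to Birkhoff's ergodic theorem applied to the vector-valued process $(D_i)_{i\in\N}$ defined in \eqref{eq:defkn}.

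First, I would observe that each $D_i$ is a fixed measurable function of the block $(\bsg_i,\bsg_{i+1},\dots,\bsg_{i+r})$. Since $(\bsg_i)_{i\in\N}$ is strictly stationary by Assumption~\ref{as:mixing}, the sequence $(D_i)_{i\in\N}$ is strictly stationary as well. Strong $\alpha$-mixing implies ergodicity of the generating shift, so $(\bsg_i)$ is ergodic, and consequently $(D_i)$, being a factor, is also ergodic.

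Second, I would verify the integrability condition $\E\|D_1\|<\infty$ required by the ergodic theorem. The entries of $D_1$ are polynomial in the components of $\bsg_1,\dots,\bsg_{1+r}$ of degree at most four; the highest-order blocks are $\vect(\bsg_1\bsg_1^\ast)\vect(\bsg_{1+h}\bsg_{1+h}^\ast)^\ast$ for $0\le h\le r$, whose norms are bounded by $c\|\bsg_1\|^2\|\bsg_{1+h}\|^2$. By Cauchy--Schwarz and stationarity,
\begin{equation*}
\E\bigl(\|\bsg_1\|^2\|\bsg_{1+h}\|^2\bigr)\le \bigl(\E\|\bsg_1\|^4\bigr)^{1/2}\bigl(\E\|\bsg_{1+h}\|^4\bigr)^{1/2}=\E\|\bsg_1\|^4.
\end{equation*}
By Lemma~\ref{le:momG1}(b) applied with $p=2$, the bound $\E\|\bsg_1\|^4<\infty$ follows from Assumption~\ref{as:El14fin} ($\E\|L_1\|^4<\infty$) together with Assumption~\ref{as:Y0E4} ($\E\|Y_0\|^4<\infty$, which in particular implies $\E\|Y_0\|^2<\infty$). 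The lower-order block $\E\|\vect(\bsg_1\bsg_1^\ast)\|$ is handled analogously (indeed, $\E\|\bsg_1\|^2<\infty$ already follows from Proposition~\ref{pr:SecStrV0}). Hence $\E\|D_1\|<\infty$, and by stationarity $\E D_1=k_{\theta_0,r}$ by construction of $k_{\theta,r}$ in \eqref{eq:defEk1}.

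Third, Birkhoff's ergodic theorem now yields
\begin{equation*}
\frac{1}{n-r}\sum_{i=1}^{n-r}D_i\stas \E D_1=k_{\theta_0,r}\quad\text{as }n\to\infty.
\end{equation*}
Finally, since $(n-r)/n\to 1$ and $r$ is a fixed finite integer,
\begin{equation*}
\hat k_{n,r}=\frac{n-r}{n}\cdot\frac{1}{n-r}\sum_{i=1}^{n-r}D_i\stas k_{\theta_0,r},
\end{equation*}
which is the claim. No step is genuinely difficult here; the only thing to be careful about is the integrability of the fourth-order cross terms, which is the reason Assumptions \textbf{a} and \textbf{c} are invoked (rather than just strong mixing), and this is dispatched by a single application of Cauchy--Schwarz together with Lemma~\ref{le:momG1}.
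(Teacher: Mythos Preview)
Your proof is correct and follows essentially the same route as the paper's: deduce ergodicity of $(\bsg_i)$ from strict stationarity and $\alpha$-mixing, verify $\E\|D_1\|<\infty$ via Lemma~\ref{le:momG1} with $p=2$, and apply Birkhoff's ergodic theorem. You supply a few details the paper omits (the Cauchy--Schwarz step and the $(n-r)/n$ normalization), but nothing in the argument differs in substance.
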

\begin{proof}
It follows from \ref{as:mixing} that the log-price process $(\bsg_i)_{i \in \N}$ is ergodic and since both \\ $\E \|\vect(\bm{G}_1 \bm{G}_1^{\ast})\|$ and $\E \|\vect(\bm{G}_1 \bm{G}_1^{\ast}) \vect(\bm{G}_{1+h} \bm{G}_{1+h}^{\ast})^*\|$ are finite (Lemma~\ref{le:momG1} with $p=2$ under \ref{as:El14fin} and \ref{as:YtsecSt}), we can apply Birkhoff's ergodic theorem (\cite[Theorem 4.4]{Krengel85}) to conclude the result.
\end{proof}

Next, we state the weak consistency property of the GMM estimator.
\begin{theorem}\label{th:conGMM}
If Assumptions \textbf{a},\textbf{b}, \textbf{c}, \ref{as:comp}, \ref{as:ident}-\ref{as:mixing} hold, then the GMM estimator defined in \eqref{eq:defGMM} is weakly consistent. 
\end{theorem}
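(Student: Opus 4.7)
The plan is to apply a standard consistency argument for minimum‑distance (M‑)estimators in the style of Newey and McFadden (1994), Theorem~2.1. Define the sample and population criterion functions
\begin{equation*}
Q_n(\theta) := (\hat{k}_{n,r}-k_{\theta,r})^{T} \Omega (\hat{k}_{n,r}-k_{\theta,r}),
\qquad
Q(\theta) := (k_{\theta_0,r}-k_{\theta,r})^{T} \Omega_0 (k_{\theta_0,r}-k_{\theta,r}),
\end{equation*}
where $\Omega_0$ denotes the positive definite probability limit of $\Omega$. The usual three ingredients must be verified: (i) uniform convergence $\sup_{\theta\in\Theta}|Q_n(\theta)-Q(\theta)| \stp 0$; (ii) continuity of $Q$ on $\Theta$; and (iii) $\theta_0$ is the unique minimiser of $Q$.

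For (ii) and the prerequisite continuity of $\theta\mapsto k_{\theta,r}$, note that the closed‑form expressions \eqref{eq:EG1}, \eqref{eq:acvgg} and \eqref{eq:vg1g1} are built from Kronecker products, matrix exponentials, and inverses of $\calb_\theta,\calc_\theta,(A_\theta\otimes A_\theta)$. Under Assumption~\textbf{b} (imposed on the whole parameter space $\Theta$) these inverses exist and depend continuously on $(A_\theta,B_\theta,C_\theta)$; combining with \ref{as:diffABC} yields continuity of $\theta\mapsto k_{\theta,r}$, hence of $Q$. The compactness assumption \ref{as:comp} then gives boundedness of $k_{\theta,r}$ on $\Theta$.

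For (i), Lemma~\ref{le:ConsM} supplies $\hat{k}_{n,r}\stas k_{\theta_0,r}$, and $\Omega\stp\Omega_0$ by assumption. Since $\theta\mapsto k_{\theta,r}$ does not depend on the sample and $\Theta$ is compact, the difference $Q_n(\theta)-Q(\theta)$ can be written as a quadratic form whose coefficients are functions of the two data‑dependent quantities $\hat{k}_{n,r}$ and $\Omega$ multiplied by bounded continuous functions of $\theta$. A direct triangle‑inequality estimate of the type
\begin{equation*}
\sup_{\theta\in\Theta}|Q_n(\theta)-Q(\theta)| \;\le\; c_1\,\|\hat{k}_{n,r}-k_{\theta_0,r}\|\cdot\sup_{\theta\in\Theta}\|k_{\theta,r}\| + c_2\,\|\Omega-\Omega_0\|\cdot\sup_{\theta\in\Theta}\|k_{\theta_0,r}-k_{\theta,r}\|^{2} + (\text{similar terms})
\end{equation*}
then shows that the supremum is $o_{P}(1)$, giving uniform convergence without any stochastic equicontinuity argument.

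For (iii), observe that $Q(\theta)\ge 0$ since $\Omega_0$ is positive definite, with equality if and only if $k_{\theta,r}=k_{\theta_0,r}$; the identifiability hypothesis \ref{as:ident} then forces $\theta=\theta_0$. Combining (i)–(iii) with compactness of $\Theta$ and invoking the standard consistency theorem for extremum estimators concludes that $\hat\theta_n\stp\theta_0$. The main conceptual obstacle is genuinely step (iii), but it has been offloaded into the identifiability assumption \ref{as:ident}; what remains is the technical point of step (ii) — checking that the closed‑form moment map is globally continuous on $\Theta$ — which is straightforward once Assumption~\textbf{b} is in force uniformly over $\Theta$.
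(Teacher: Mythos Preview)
Your proof is correct and follows essentially the same route as the paper: both invoke a standard GMM/extremum-estimator consistency theorem (the paper cites M\'aty\'as (1999), you cite Newey--McFadden (1994)), verify identifiability via \ref{as:ident}, and obtain uniform convergence of the criterion from Lemma~\ref{le:ConsM}. The paper's version is slightly terser---it treats $\Omega$ as non-random and simply observes that $\sup_{\theta}\|(\hat{k}_{n,r}-k_{\theta,r})-(k_{\theta_0,r}-k_{\theta,r})\|=\|\hat{k}_{n,r}-k_{\theta_0,r}\|$ is independent of $\theta$---whereas you expand the quadratic form explicitly and accommodate a data-dependent $\Omega\stp\Omega_0$.
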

\begin{proof}
We check Assumptions~1.1-1.3 in \cite{matyas99GMM} that ensure weak consistency of the GMM estimator in \eqref{eq:defGMM}. Assumption~1.1 is satisfied due to our identifiability condition \ref{as:ident}. It follows from \eqref{eq:defGMM} combined with Lemma~\ref{le:ConsM} that 
\begin{equation*}
\sup_{\theta \in \Theta} \| \hat{k}_{n,r} - k_{\theta,r} - (k_{\theta_0,r} - k_{\theta,r}) \| =  \| \hat{k}_{n,r}  - k_{\theta_0,r}  \|\stas 0, \quad \nto,
\end{equation*}
which is Assumption~1.2 of \cite{matyas99GMM}. Since the weight matrix $\Omega$ in \eqref{eq:defGMM} is non-random, their Assumption~1.3 is automatically satisfied, completing the proof.
\end{proof}

In order to prove asymptotic normality of the GMM estimator, we need some auxiliary results.
\begin{lemma}\label{le:contDifk}
If Assumptions \textbf{a}, \textbf{b},  \textbf{c}, \ref{as:comp} and \ref{as:diffABC} hold, then the map $\Theta \mapsto k_{\theta,r}$ in \eqref{eq:defEk1} is continuously differentiable.
\end{lemma}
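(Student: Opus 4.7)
The plan is to verify that each component of $k_{\theta,r}$ defined in \eqref{eq:defEk1} is a composition of $C^1$ functions of $\theta$. The formulas for these components are given explicitly by \eqref{eq:EG1}, \eqref{eq:acvgg}, and \eqref{eq:vg1g1}, so the proof reduces to checking that each building block is $C^1$ on $\Theta$ and that all inverses and matrix exponentials appearing are well-defined on the whole parameter space.

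First, I would assemble the elementary building blocks. By \ref{as:diffABC}, the maps $\theta \mapsto A_\theta$, $\theta \mapsto B_\theta$, $\theta \mapsto C_\theta$ are $C^1$. Since the Kronecker product is bilinear and matrix multiplication and addition are smooth, the maps $\theta \mapsto \calb_\theta$ and $\theta \mapsto \calc_\theta$ defined by \eqref{eq:def:Calb} are $C^1$ as polynomial expressions in $A_\theta$, $B_\theta$ (the constants $\sigma_L$ and $\rho_L$ and the fixed operators $\bsQ$, $\calk_d$ do not depend on $\theta$). Likewise $\theta \mapsto (A_\theta \otimes A_\theta)$ and $\theta \mapsto \cala_\theta$ are $C^1$.

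Next I would invoke that matrix inversion is a $C^\infty$ diffeomorphism on $GL_n(\R)$, and that the matrix exponential is real-analytic on $M_{n}(\R)$. Under \ref{as:BCcurlInv} (part of Assumptions~\textbf{b} which hold throughout) the spectra of $\calb_\theta$ and $\calc_\theta$ lie in the open left half-plane for every $\theta \in \Theta$, so in particular both matrices are invertible on all of $\Theta$; hence $\theta \mapsto \calb_\theta^{-1}$ and $\theta \mapsto \calc_\theta^{-1}$ are $C^1$. Since $A_\theta \in GL_d(\R)$ on $\Theta$ and $(A_\theta \otimes A_\theta)^{-1} = A_\theta^{-1} \otimes A_\theta^{-1}$, the map $\theta \mapsto ((A_\theta \otimes A_\theta)^{\ast})^{-1}$ is $C^1$. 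Similarly $\theta \mapsto e^{\calb_\theta \Delta h}$ is $C^1$ for each fixed $h \in \{0,1,\dots,r\}$.

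Finally, I would read off the claim from the formulas. The expression for $\E_\theta \vect(V_0)$ in \eqref{eq:EV} is a composition of the above $C^1$ maps, hence $C^1$; plugging into \eqref{eq:EG1} shows $\theta \mapsto \E_\theta \vect(\bsg_1)$ and the covariance-of-squared-returns quantities inherit the same regularity. The formula for $\var(\vect(V_0))$ in Proposition~\ref{pr:SecStrV0} is an algebraic combination of the same ingredients, and it enters \eqref{eq:acvgg} and \eqref{eq:vg1g1} through sums, products, Kronecker products, matrix inverses, and matrix exponentials, all of which preserve $C^1$-regularity. Since each block of $k_{\theta,r}$ is $C^1$ and the stacking into a column vector is linear, the full map is $C^1$. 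I expect no genuine obstacle beyond bookkeeping; the only point requiring care is to use \ref{as:BCcurlInv} (together with the standing hypothesis that $A_\theta$ is invertible) to ensure that $\calb_\theta$, $\calc_\theta$, and $A_\theta \otimes A_\theta$ never become singular on $\Theta$, so that the inverses in the closed-form expressions are uniformly well-defined.
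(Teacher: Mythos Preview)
Your proposal is correct and follows the same overall strategy as the paper: identify the building blocks $A_\theta$, $A_\theta^{-1}$, $\calb_\theta$, $\calb_\theta^{-1}$, $\calc_\theta^{-1}$, $e^{\calb_\theta\Delta h}$, $C_\theta$ as $C^1$ in $\theta$, and then compose. The one genuine difference is your treatment of the matrix exponential. You invoke the real-analyticity of $M\mapsto e^{M}$ and conclude $C^1$ of $\theta\mapsto e^{\calb_\theta\Delta h}$ directly via the chain rule. The paper instead writes out the Wilcox integral formula
\[
\partialthetai e^{-\alpha\calb_\theta}=-\int_0^\alpha e^{-(\alpha-u)\calb_\theta}\Big(\partialthetai\calb_\theta\Big)e^{-u\calb_\theta}\,\diff u,
\]
and then uses dominated convergence---with the compactness assumption \ref{as:comp} supplying the uniform bounds $\sup_{\theta\in\Theta}\|\calb_\theta\|<\infty$ and $\sup_{\theta\in\Theta}\|\partialthetai\calb_\theta\|<\infty$---to check continuity of this derivative. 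Your route is shorter and shows that \ref{as:comp} is in fact not needed for the lemma; the paper's route has the minor advantage of giving an explicit integral expression for the partial derivatives, should one want them later.
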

\begin{proof}
The the map $\Theta \mapsto k_{\theta,r}$ depends on the moments given in \eqref{eq:EG1}, \eqref{eq:acvgg} and \eqref{eq:vg1g1}. These moments are given in terms of products and Kronecker products involving the quantities $A_\theta$, $A_\theta^{-1}$, $\calb_\theta$, $\calb_\theta^{-1}$, $e^{-\alpha \calb_\theta}$, $\alpha > 0$, $C_\theta$, $\calc_\theta$ and $\calc_\theta^{-1}$. From \ref{as:diffABC} we obtain the continuous differentiability of $\calb_\theta$,$\calb_\theta^{-1}$, $\calc_\theta$,$\calc_\theta^{-1}$ and $A_\theta^{-1}$ on $\Theta$. Let $i \in \{1,\dots,q\}$ be fixed. According to (2.1) in \cite{Wilcox67}, the matrix exponential is differentiable and
\begin{equation}\label{eq:difEbe}
\partialthetai e^{-\alpha \calb_\theta} = - \int_0^\alpha e^{-(\alpha-u)\calb_\theta} \bigg( \partialthetai \calb_\theta \bigg) e^{-u \calb_\theta} \diff u.
\end{equation}
Using the definition of $\calb_\theta$ in \eqref{eq:def:Calb} combined with \ref{as:comp} and \ref{as:diffABC} gives
\begin{equation*}\label{eq:bsupnb}
\supthe \|\calb_\theta\| \leq 2 \bigg( \supthe \|B_\theta\| \bigg) \|I_d\| + \sigma_L \bigg( \supthe \|A_\theta\|^2 \bigg) < \infty.
\end{equation*}
Additionally, an application of the chain rule to $\partialthetai \calb_\theta$ combined with \ref{as:comp} and \ref{as:diffABC} gives $\supthe \| \partialthetai \calb_\theta \| < \infty$ and, therefore,
\begin{equation}\label{eq:bsupndb}
\supthe \bigg\| e^{-(\alpha-u)\calb_\theta} \bigg( \partialthetai \calb_\theta \bigg) e^{-u \calb_\theta} \bigg\| \leq \supthe e^{(|\alpha-u| + |u|)\|\calb_\theta\|} \bigg( \supthe \bigg\| \partialthetai \calb_\theta \bigg\| \bigg), \quad u \in [0,\alpha].
\end{equation}
Thus, the continuous differentiability of the map in \eqref{eq:difEbe} follows by dominated convergence with dominating function as in \eqref{eq:bsupndb}. Another application of the chain rule shows that the map $\theta \mapsto k_{\theta,r}$ is continuously differentiable on $\Theta$. 
\end{proof}

\begin{lemma}\label{le:asnmom}
Assume that Assumptions~\textbf{a},\textbf{b}, \textbf{c}, \ref{as:mixing} and \ref{as:momANC} hold and let 
\begin{equation}\label{eq:defSigma}
\Sigma_{\theta_0}  =  \E (F_1 F_1^\ast) + \sum_{i=1}^{\infty} \E  \big\{(F_1 F_{1+i}^\ast) + \E   (F_1 F_{1+i}^\ast)^\ast \big\} 
\end{equation}

with $F_i = D_i - k_{\theta_0,r}$ and $D_i$ as defined in \eqref{eq:defkn}. Then for $r \in \N_0$
\begin{equation*}\label{eq:asn}
\sqrt{n}(\hat{k}_{n,r} - k_{\theta_0,r}) \std \mathcal{N}(0,\Sigma_{\theta_0}), \quad n \rightarrow \infty.
\end{equation*}
\end{lemma}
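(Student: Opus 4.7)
The plan is to reduce the statement to a central limit theorem for strictly stationary $\alpha$-mixing random vectors applied to the sequence $F_i = D_i - k_{\theta_0,r}$, $i \in \N$.

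First, I would rewrite the quantity of interest as a partial-sum expression. Writing $n = (n-r) + r$, we have
\begin{equation*}
\sqrt{n}\bigl(\hat{k}_{n,r} - k_{\theta_0,r}\bigr)
= \frac{1}{\sqrt{n}}\sum_{i=1}^{n-r} F_i \;-\; \frac{r}{\sqrt{n}}\,k_{\theta_0,r}.
\end{equation*}
The second term is deterministic and tends to $0$ since $r$ is fixed. Moreover,
\begin{equation*}
\frac{1}{\sqrt{n}}\sum_{i=1}^{n-r} F_i
= \frac{1}{\sqrt{n}}\sum_{i=1}^{n} F_i \;-\; \frac{1}{\sqrt{n}}\sum_{i=n-r+1}^{n} F_i,
\end{equation*}
where the last sum involves a fixed number $r$ of summands with finite second moment (see the moment verification below), so by Chebyshev's inequality it converges to $0$ in probability. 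By Slutsky's lemma it therefore suffices to show
\begin{equation*}
\frac{1}{\sqrt{n}}\sum_{i=1}^{n} F_i \;\std\; \mathcal{N}(0,\Sigma_{\theta_0}).
\end{equation*}

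Next, I would verify the hypotheses of a CLT for strictly stationary $\alpha$-mixing sequences (e.g., Ibragimov's theorem, see Theorem~18.5.3 in Ibragimov and Linnik or Theorem~1.7 in Bradley's survey on strong mixing conditions). \emph{Stationarity and mean-zero}: $(\bsg_i)_{i\in\N}$ is strictly stationary by \ref{as:mixing}, and since $D_i$ is a measurable function of the finite window $(\bsg_i,\dots,\bsg_{i+r})$, so is $(F_i)$; stationarity gives $\E F_1 = k_{\theta_0,r} - k_{\theta_0,r} = 0$. \emph{Mixing}: the sequence $(F_i)$ inherits exponential $\alpha$-mixing from $(\bsg_i)$ since the $\sigma$-algebras $\sigma(F_1,\dots,F_k)$ and $\sigma(F_{k+m},F_{k+m+1},\dots)$ are contained respectively in $\sigma(\bsg_1,\dots,\bsg_{k+r})$ and $\sigma(\bsg_{k+m},\bsg_{k+m+1},\dots)$, so the mixing coefficients of $(F_i)$ are bounded by $\alpha_{\bsg}(m-r)$ for $m > r$, which still decays exponentially. \emph{Moment condition}: each coordinate of $D_i$ is a polynomial of degree $4$ in the components of $\bsg_i,\dots,\bsg_{i+r}$, so by repeated applications of H\"older's inequality and Assumption~\ref{as:momANC},
\begin{equation*}
\E\|F_1\|^{2+\eta} \;\leq\; c\bigl(1 + \E\|\bsg_1\|^{4(2+\eta)}\bigr) < \infty
\qquad\text{for }\eta = \delta/4 > 0.
\end{equation*}
Since exponential mixing implies $\sum_{k\geq 1} \alpha(k)^{\eta/(2+\eta)} < \infty$, the Ibragimov CLT applies componentwise (and hence jointly via the Cram\'er--Wold device).

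Finally, I would identify the limiting covariance. By Davydov's covariance inequality, for $i > r$,
\begin{equation*}
\bigl\|\E(F_1 F_{1+i}^\ast)\bigr\| \;\leq\; c\,\alpha_{\bsg}(i-r)^{\eta/(2+\eta)}\,\|F_1\|_{2+\eta}^{2},
\end{equation*}
which, together with exponential mixing, gives absolute convergence of the series in \eqref{eq:defSigma}. Stationarity yields
\begin{equation*}
\var\!\left(\frac{1}{\sqrt{n}}\sum_{i=1}^{n} F_i\right)
= \E(F_1 F_1^\ast) + \sum_{i=1}^{n-1}\!\Bigl(1-\tfrac{i}{n}\Bigr)\bigl\{\E(F_1 F_{1+i}^\ast)+\E(F_1 F_{1+i}^\ast)^\ast\bigr\},
\end{equation*}
which converges to $\Sigma_{\theta_0}$ by dominated convergence using the summable bound above, so the Ibragimov CLT delivers precisely the claimed limiting law.

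The main obstacle I anticipate is bookkeeping rather than conceptual: one must carefully track that the mixing rate for $(F_i)$ is only the mixing rate of $(\bsg_i)$ shifted by the fixed constant $r$ (harmless thanks to the exponential rate), and one must quote a form of the $\alpha$-mixing CLT that accommodates $\R^{p}$-valued summands with finite $(2+\eta)$-th moment and absolutely summable long-run covariance; the Cram\'er--Wold reduction to the scalar case handles this cleanly.
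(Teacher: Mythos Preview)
Your proposal is correct and follows essentially the same approach as the paper: both reduce to the scalar case via Cram\'er--Wold, verify that $(F_i)$ inherits exponential $\alpha$-mixing from $(\bsg_i)$ with a harmless shift by $r$, check the $(2+\eta)$-th moment from \ref{as:momANC}, and apply Ibragimov--Linnik's $\alpha$-mixing CLT. Your write-up is in fact a bit more detailed than the paper's (the Slutsky reduction from $n-r$ to $n$ summands and the explicit Davydov bound for the absolute summability of the long-run covariance), but the core argument is the same.
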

\begin{proof}
For the asymptotic normality of \eqref{eq:defkn} we use the Cram\'er-Wold device and show that
\begin{equation*}
\sqrt{n}\Big( \frac{1}{n} \sum_{i=1}^{n-r} \lambda^\ast F_i \Big) \std \mathcal{N}(0,\lambda^\ast \Sigma_{\theta_0} \lambda),\quad\nto,
\end{equation*}
for all vectors $\lambda \in \R^{d^2 + (r+1)d^4}$. Denote by $\alpha_{\bsg}$ the mixing coefficients of $(\bsg_i)_{i \in \N}$. Since each $F_i$ is a measurable function of $\bsg_i,\dots,\bsg_{i+r}$ it follows from \ref{as:mixing} and Remark~1.8 of \cite{Bradley07} that $(\lambda^* F_i)_{i \in \N}$ is $\alpha$-mixing with mixing coefficients satisfying $\alpha_{F}(n) \leq \alpha_{\bsg}(n-(r+1))$ for all $n \geq r + 2$. Therefore, $\sum_{n=0}^{\infty} (\alpha_{F}(n))^{\frac{\epsilon}{2 + \epsilon}} < \infty$ for all $\epsilon > 0$. 
From \ref{as:momANC} we obtain $\E \| \lambda^\ast F_1 \|^{2+\epsilon/4} < \infty$ for some $\epsilon > 0$. 
Thus, the CLT for $\alpha$-mixing sequences applies, see e.g. \cite[Theorem~18.5.3]{Ibragimov71}, so that 
\begin{equation*}
\sqrt{n}\Big( \frac{1}{n} \sum_{i=1}^{n-r} \lambda^\ast F_i \Big) \std \mathcal{N}(0,\zeta),\quad\nto,
\end{equation*}
where
\begin{equation*}
\zeta = \E \lambda^\ast F_1 F_1^\ast \lambda  + 2\sum_{i=1}^{\infty} \E \lambda^\ast F_1 F_{1+i}^\ast \lambda.
\end{equation*}
Since $\lambda^\ast F_1 F_{1+i}^\ast \lambda = \lambda^\ast (F_1 F_{1+i}^\ast)^\ast \lambda$  we get \eqref{eq:defSigma} after rearranging the above equation.
\end{proof}

\begin{theorem}\label{th:NormGMM}
Assume that Assumptions~\textbf{a},\textbf{b}, \textbf{c}, \textbf{d} and \ref{as:momANC} hold and that the matrix $\Sigma$ in \eqref{eq:defSigma} is positive definite. Then the GMM estimator defined in \eqref{eq:defGMM} is asymptotically normal with covariance matrix
\begin{equation}\label{eq:acmGMM}
(\mathcal{J}_{\theta_0})^{-1}\mathcal{I}_{\theta_0}(\mathcal{J}_{\theta_0})^{-1},
\end{equation}
where
$\mathcal{J}_{\theta_0} = (\nabla_{\theta} k_{\theta_0,r})^\top \Omega (\nabla_{\theta} k_{\theta_0,r})$ and 
$\mathcal{I}_{\theta_0} = (\nabla_{\theta} k_{\theta_0,r})^\top\Omega\Sigma_{\theta_0}\Omega(\nabla_{\theta} k_{\theta_0,r})$.

\end{theorem}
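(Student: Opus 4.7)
The plan is to verify the standard sufficient conditions for asymptotic normality of a GMM estimator, e.g.\ those stated as Assumptions~1.1--1.6 in \cite{matyas99GMM}, so that their asymptotic normality result applies directly and yields the sandwich covariance \eqref{eq:acmGMM}. Consistency of $\hat\theta_n$ is already established in Theorem~\ref{th:conGMM}, and \ref{as:theInt} places $\theta_0$ in the interior of $\Theta$; consequently, with probability tending to one, $\hat\theta_n$ lies in the interior and the first-order condition
\begin{equation*}
(\nabla_{\theta} k_{\hat\theta_n,r})^{\ast} \Omega \bigl( \hat k_{n,r} - k_{\hat\theta_n,r}\bigr) = 0
\end{equation*}
is satisfied. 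Continuous differentiability of $\theta \mapsto k_{\theta,r}$ on the compact set $\Theta$ is given by Lemma~\ref{le:contDifk}; the asymptotic normality of the sample moments is exactly Lemma~\ref{le:asnmom}.

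Next I would carry out the usual mean-value expansion component-wise: for each coordinate of $k_{\theta,r}$, write
\begin{equation*}
k_{\hat\theta_n,r} - k_{\theta_0,r} = \bigl( \nabla_{\theta} k_{\bar\theta_n,r}\bigr)(\hat\theta_n - \theta_0),
\end{equation*}
where $\bar\theta_n$ lies on the segment joining $\hat\theta_n$ and $\theta_0$ (the segment may depend on the coordinate, but this causes no difficulty). Substituting into the first-order condition and solving for $\sqrt n(\hat\theta_n - \theta_0)$ produces
\begin{equation*}
\sqrt n(\hat\theta_n - \theta_0) = \bigl[(\nabla_{\theta} k_{\hat\theta_n,r})^{\ast}\Omega (\nabla_{\theta} k_{\bar\theta_n,r})\bigr]^{-1}(\nabla_{\theta} k_{\hat\theta_n,r})^{\ast}\Omega \sqrt n\bigl(\hat k_{n,r} - k_{\theta_0,r}\bigr),
\end{equation*}
valid on the event where the bracketed matrix is invertible. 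By consistency of $\hat\theta_n$ (and hence of $\bar\theta_n$) together with continuity of $\nabla_\theta k_{\theta,r}$ in $\theta$, both $\nabla_{\theta} k_{\hat\theta_n,r}$ and $\nabla_{\theta} k_{\bar\theta_n,r}$ converge in probability to $\nabla_{\theta} k_{\theta_0,r}$, so the bracketed matrix converges in probability to $\mathcal{J}_{\theta_0}$. Combining Lemma~\ref{le:asnmom} (which gives $\sqrt n(\hat k_{n,r}-k_{\theta_0,r}) \std \mathcal{N}(0,\Sigma_{\theta_0})$) with Slutsky's theorem and the continuous mapping theorem yields
\begin{equation*}
\sqrt n(\hat\theta_n - \theta_0) \std \mathcal{N}\bigl(0,(\mathcal{J}_{\theta_0})^{-1}\mathcal{I}_{\theta_0}(\mathcal{J}_{\theta_0})^{-1}\bigr).
\end{equation*}

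The main obstacle is ensuring that $\mathcal{J}_{\theta_0}$ is invertible so the above inversion makes sense; this is implicit in the theorem via positive definiteness of $\Sigma_{\theta_0}$ and $\Omega$, but it additionally requires that $\nabla_{\theta} k_{\theta_0,r}$ has full column rank. Under the identifiability assumption \ref{as:ident}, if the rank were deficient, the image of $\theta \mapsto k_{\theta,r}$ would fail to separate directions locally near $\theta_0$, contradicting identifiability in a neighborhood; this point deserves a brief argument or should be stated as part of the hypotheses. Beyond this, the argument is the routine delta-method-style manipulation sketched above, with all probabilistic content delivered by Theorem~\ref{th:conGMM}, Lemmas~\ref{le:contDifk} and \ref{le:asnmom}, and Slutsky's theorem.
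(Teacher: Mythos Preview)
Your approach is essentially the paper's: verify the standard hypotheses for the GMM asymptotic normality theorem in \cite{matyas99GMM}, using consistency (Theorem~\ref{th:conGMM}), continuous differentiability of $\theta\mapsto k_{\theta,r}$ (Lemma~\ref{le:contDifk}), and the CLT for the sample moments (Lemma~\ref{le:asnmom}). The paper simply cites Theorem~1.2 of \cite{matyas99GMM} (checking their Assumptions~1.7--1.9) rather than writing out the mean-value expansion you sketch, but the content is the same.

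One caution on the final paragraph: your tentative argument that identifiability \ref{as:ident} forces $\nabla_\theta k_{\theta_0,r}$ to have full column rank is not correct in general --- global injectivity of a $C^1$ map does not imply a nondegenerate Jacobian (e.g.\ $x\mapsto x^3$ on $\R$). Neither your write-up nor the paper proves this rank condition; in the paper it is absorbed into the cited framework of \cite{matyas99GMM} as an implicit standing assumption, and that is how you should treat it as well.
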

\begin{proof}
We check Assumptions~1.7-1.9 of Theorem~1.2 in \cite{matyas99GMM}. Since by Lemma~\ref{le:contDifk} the map $\theta \mapsto k_{\theta,r}$ is continuously differentiable, their Assumption~1.7 is valid. Now, for any sequence $\tilde{\theta}_n$ such that $\tilde{\theta}_n \stp \theta_0$ as $\nto$, it follows from the continuous mapping theorem by the continuity of the map $\Theta \mapsto \partialtheta k_{\theta,r}$ in Lemma~\ref{le:contDifk} that $\partialtheta (\hat{k}_{n,r} - k_{\theta_n}) \stp (k_{\theta_0} - \partialtheta k_{\theta_0})$ as $\nto$. Therefore, Assumption~1.8 in \cite{matyas99GMM} is also satisfied. Since Lemma~\ref{le:asnmom} implies Assumption~1.9, we conclude the result.
\end{proof}


\begin{remark}
In order to apply the results of Section~\ref{se:apgmmG} we need to check Assumption~\textbf{c}, model identifiability \ref{as:ident}, strong mixing of the log-price returns sequence \ref{as:mixing} and existence of certain moments of its stationary distribution (Assumption~\textbf{e}) . 
In Sections~\ref{se:mixing} and \ref{se:identif} we give sufficient conditions for identifiability of the model parameters, strict stationarity and strong mixing. Then we use these results to derive in Section~\ref{se:areEA} more palpable conditions under which Theorems~\ref{th:conGMM} and \ref{th:NormGMM} can be applied.
\end{remark}

\subsection{Sufficient conditions for strict stationarity and strong mixing}\label{se:mixing}

Sufficient conditions for the existence of a unique stationary distribution of $(Y_t)_{t \in \R^+}$, geometric ergodicity and for the finiteness of moments of order $p$ of the stationary distribution have recently been given in \cite{Stelzer17}. We state these conditions in the next theorem, which are conditions (i), (iv) and (v) of Theorem~4.3 in \cite{Stelzer17}.

\begin{theorem}[{Geometric Ergodicity - \cite[Theorem~4.3]{Stelzer17}}]\label{th:331Vest} Let $Y$ be a MUCOGARCH volatility process which is $\mu$-irreducible with the support of $\mu$ having non-empty interior and aperiodic. Assume that one of the following conditions is satisfied:
\begin{itemize}

\item[(i)] setting \(p=1\) there exists \(\Xi \in \mathbb{S}_{d}^{++}\) such that
\begin{equation}\label{eq:cond41}
\Xi B+B^{\top} \Xi+\sigma_L A^{\top} \Xi A  \in-\mathbb{S}_{d}^{++},
\end{equation}
\item[(ii)] there exist $p \in[1, \infty)$ and $\Xi \in \mathbb{S}_{d}^{++}$ such that
\begin{equation}\label{eq:thcii}
\int_{\mathbb{R}^{d}}\left(2^{p-1}\left(1+K_{\Xi, A}\|y\|_{2}^{2}\right)^{p}-1\right) \nu_{L}(d y)+ p K_{\Xi, B} <0,
\end{equation}
where 
$$
K_{\Xi, B} =\max _{X \in \mathbb{S}_{d}^{+}, \operatorname{tr}(X)=1} \frac{\operatorname{tr}\left(\left(\Xi B+B^{\top} \Xi\right) X\right)}{\operatorname{tr}(\Xi X)} \text { and } K_{\Xi, A} =\max _{X \in \mathbb{S}_{d}^{+}, \operatorname{tr}(X)=1} \frac{\operatorname{tr}\left(A^{\top} \Xi A X\right)}{\operatorname{tr}(\Xi X)},
$$

\item[(iii)] there exist $p \in[1, \infty)$ and $\Xi \in \mathbb{S}_{d}^{++}$ such that
\begin{equation}\label{eq:thciii}
\max \left\{2^{p-2}, 1\right\} K_{\Xi, A} \int_{\mathbb{R}^{d}}\|y\|_{2}^{2}\left(1+\|y\|_{2}^{2} K_{\Xi, A}\right)^{p-1} \nu_{L}(d y)+K_{\Xi, B}<0
\end{equation}
where $K_{\Xi, B}, K_{\Xi, A}$ are as in (ii).
\end{itemize}
Then a unique stationary distribution for the MUCOGARCH volatility process $Y$ exists, $Y$ is positive Harris recurrent, geometrically ergodic and its stationary distribution has a finite $p$-th moment.
\end{theorem}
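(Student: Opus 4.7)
The plan is to apply the Foster--Lyapunov drift criterion for continuous-time Markov processes. Since $\mu$-irreducibility with open support and aperiodicity are already assumed, what remains is to (a) exhibit a Lyapunov function $V_p: \S_d^+ \to [1,\infty)$ whose extended generator satisfies a drift inequality of the form $\mathcal{A} V_p(y) \leq -c V_p(y) + b \mathbf{1}_K(y)$ for some compact $K \subset \S_d^+$ and constants $c, b > 0$, and (b) verify that all compact subsets of $\S_d^+$ are petite. Classical Meyn--Tweedie theory then delivers positive Harris recurrence, geometric ergodicity, and finiteness of the $p$-th moment of the invariant distribution, the last of which arises because the Lyapunov function behaves like $(\operatorname{tr}(\Xi y))^p$ at infinity.

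For step (a) I would take $V_p(y) = (1 + \operatorname{tr}(\Xi y))^p$. Exploiting that $Y$ is of finite variation with explicit dynamics \eqref{eq:dYt}, the extended generator acts on smooth $f$ as
\[
(\mathcal{A} f)(y) = \langle \nabla f(y),\, By + yB^* \rangle + \int_{\R^d} \bigl[ f\bigl(y + A y^{1/2} xx^* y^{1/2} A^*\bigr) - f(y) \bigr]\, \nu_L(dx),
\]
which I would evaluate on $V_p$. In case (i) with $p=1$, Assumption~\ref{as:varL1_id} ensures that the jump part of $\operatorname{Var}(L_1)$ equals $\sigma_L I_d$, so the jump integral contributes $\sigma_L \operatorname{tr}(A^* \Xi A\, y)$ and one obtains $(\mathcal{A} V_1)(y) = \operatorname{tr}\bigl((\Xi B + B^*\Xi + \sigma_L A^* \Xi A)\, y\bigr)$, which by \eqref{eq:cond41} is bounded above by $-\lambda \operatorname{tr}(\Xi y)$ for some $\lambda > 0$. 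For cases (ii) and (iii) I would apply the elementary bounds $(1 + s + t)^p \leq 2^{p-1}\bigl((1+s)^p + t^p\bigr)$, respectively $(1+s+t)^p - (1+s)^p \leq p \max\{1, 2^{p-2}\}\, t (1+s+t)^{p-1}$, to control the jump increment. The extremal constants $K_{\Xi, A}$ and $K_{\Xi, B}$ are designed precisely to bound $\operatorname{tr}(A^* \Xi A\, y)$ and $\operatorname{tr}((\Xi B + B^*\Xi)\, y)$ against $\operatorname{tr}(\Xi y)$ uniformly in $y \in \S_d^+$; plugging these in yields $(\mathcal{A} V_p)(y) \leq \Lambda V_p(y) + R(y)$, where $\Lambda$ is the bracketed quantity in \eqref{eq:thcii} or \eqref{eq:thciii} and $R$ is a lower-order remainder. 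Negativity of $\Lambda$, combined with the fact that $R(y)/V_p(y) \to 0$ outside a sufficiently large compact set, gives the required drift inequality.

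The main obstacle I anticipate lies in step (b), the petite-set verification. This needs a weak Feller property for the transition semigroup of $Y$ on $\S_d^+$ (extracted from continuous dependence of solutions of \eqref{eq:dYt} on the initial condition) together with a form of local controllability that, combined with aperiodicity and the non-empty interior of the support of the irreducibility measure $\mu$, lets the standard Meyn--Tweedie machinery conclude that all compact sets are petite. A secondary hurdle is showing $V_p$ lies in the extended domain of $\mathcal{A}$, which requires $\int \|x\|^{2p} \nu_L(dx) < \infty$; this is implicit in the finiteness of the integrals in \eqref{eq:thcii} and \eqref{eq:thciii}. Once both points are secured, the conclusion follows from the standard drift-to-ergodicity correspondence.
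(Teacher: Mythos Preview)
The paper does not prove this theorem at all: it is quoted verbatim as Theorem~4.3 of \cite{Stelzer17}, with the surrounding text merely remarking that the listed conditions are (i), (iv) and (v) of that result. So there is no in-paper argument to compare against; your proposal is in effect a sketch of what the cited reference does, and in broad strokes the Foster--Lyapunov route via $V_p(y)=(1+\operatorname{tr}(\Xi y))^p$ combined with Meyn--Tweedie drift criteria is indeed the standard and correct approach.

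One concrete correction to your sketch: in the MUCOGARCH volatility SDE \eqref{eq:dYt} the jump increment involves $V_{t-}^{1/2}=(C+Y_{t-})^{1/2}$, not $Y_{t-}^{1/2}$. Consequently your generator formula should read
\[
(\mathcal{A} f)(y) = \langle \nabla f(y),\, By + yB^* \rangle + \int_{\R^d} \bigl[ f\bigl(y + A (C+y)^{1/2} xx^* (C+y)^{1/2} A^*\bigr) - f(y) \bigr]\, \nu_L(dx).
\]
For $p=1$ this produces the additional constant $\sigma_L\operatorname{tr}(A^*\Xi A\,C)$, which is harmless (it goes into the $b$ of the drift inequality), but for $p>1$ the presence of $C$ inside the square root means the jump bound must be stated in terms of $\operatorname{tr}(\Xi(C+y))$ rather than $\operatorname{tr}(\Xi y)$; the constants $K_{\Xi,A}$ are set up so that $\operatorname{tr}(A^*\Xi A\,(C+y))\le K_{\Xi,A}\operatorname{tr}(\Xi(C+y))$, and the $C$-contribution again ends up in the lower-order remainder. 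This does not break your argument, but the computation as written is not quite right.
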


A consequence of Theorem~\ref{th:331Vest} is that the process $Y$ is exponentially $\beta$-mixing. This implies $\alpha$-mixing of the log-price process as we state next. For more details on mixing conditions we refer to \cite{Bradley07}. 

\begin{corollary}\label{co:expBmix}
If $Y$ is strictly stationary and exponentially $\beta$-mixing, then the log-price process $(\bsg_i)_{i \in \N}$ is stationary, exponentially $\alpha$-mixing, and as a consequence also ergodic.
\end{corollary}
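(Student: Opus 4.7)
The plan is to push the strict stationarity and the exponential mixing of the continuous-time Markov process $Y$ down to the discrete-time sequence $(\bsg_i)_{i\in\N}$, using that each return depends on only one block of data. By existence and pathwise uniqueness of the strong solution to \eqref{eq:Vt}--\eqref{eq:dYt} restarted at time $(i-1)\Delta$, there is a fixed measurable map $\Phi$ such that
$$\bsg_i = \Phi\bigl(Y_{(i-1)\Delta},\,(L_t-L_{(i-1)\Delta})_{t\in[(i-1)\Delta,\,i\Delta]}\bigr).$$
Since $Y$ is strictly stationary by hypothesis and $L$ has stationary and independent increments, the block sequence $\xi_i := (Y_{(i-1)\Delta},\,(L_{(i-1)\Delta+s}-L_{(i-1)\Delta})_{s\in[0,\Delta]})$ is strictly stationary, and hence so is $(\bsg_i)_{i\in\N}=(\Phi(\xi_i))_{i\in\N}$.

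For exponential $\alpha$-mixing, fix $k,n\in\N$ and let $\calf_t^L := \sigma(L_s : s\leq t)\vee\sigma(Y_0)$ be the filtration to which both $L$ and $Y$ are adapted. Two inclusions are key: first, $\sigma(\bsg_1,\dots,\bsg_k)\subseteq \calf_{k\Delta}^L$, because $V_{s-}=C+Y_{s-}$ is $\calf^L$-predictable; second, $\sigma(\bsg_{k+n},\bsg_{k+n+1},\dots)$ is contained in $\sigma(Y_{(k+n-1)\Delta})\vee\sigma(L_t-L_{(k+n-1)\Delta}:t\geq (k+n-1)\Delta)$, and the latter $\sigma$-algebra of future L\'evy increments is independent of $\calf_{(k+n-1)\Delta}^L\supseteq \calf_{k\Delta}^L$. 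I would then integrate out those independent future increments, reducing the dependence question to bounding the coupling between $\calf_{k\Delta}^L$ and $\sigma(Y_{(k+n-1)\Delta})$; by the Markov property of $Y$ with respect to $(\calf_t^L)$ and the standard total-variation characterisation of $\beta$-mixing for a geometrically ergodic Markov process, this is controlled by $\beta_Y((n-1)\Delta)$. Using the universal inequality $\alpha\leq\beta$, one obtains $\alpha_{\bsg}(n)\leq \beta_Y((n-1)\Delta)$, which decays exponentially by hypothesis.

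The ergodicity claim then comes for free, since any strictly stationary $\alpha$-mixing sequence is ergodic (\cite{Bradley07}). The step I expect to be the most delicate is the second inclusion together with the coupling argument that turns the continuous-time $\beta$-mixing of $Y$ with respect to $(\calf_t^L)$ into an $\alpha$-mixing bound for the sampled return sequence $(\bsg_i)$; one has to be a little careful in separating the two sources of future randomness, namely the evolution of $Y$ from time $(k+n-1)\Delta$ onward and the independent L\'evy increments on the ``gap'' interval $((k\Delta,\,(k+n-1)\Delta]$. Everything else reduces to the stationarity and independence of L\'evy increments, $\calf^L$-adaptedness of $Y$, and the textbook implications $\alpha\leq\beta$ and mixing implies ergodicity.
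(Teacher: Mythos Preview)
Your proposal is correct and follows essentially the same approach as the paper: the paper's proof simply invokes the strong Markov property of $Y$ (from \cite[Theorem~4.4]{Stelzer10}) and defers to the argument of \cite[Theorem~3.4]{Haug07}, which is precisely the block-structure and Markov-reduction argument you have written out in detail. Your explicit bound $\alpha_{\bsg}(n)\le\beta_Y((n-1)\Delta)$ is exactly the output of that argument.
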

\begin{proof}
Since $Y$ is an exponentially $\beta$-mixing, homogeneous strong Markov process \cite[Theorem~4.4]{Stelzer10}, and driven only by the discrete part of the quadratic variation of $L$, the proof follows by the same arguments as for Theorem~3.4 in \cite{Haug07}.
\end{proof}

Next, we state a result which gives sufficient conditions for the irreducibility of the MUCOGARCH volatility process $Y$ process, which is one of the sufficient conditions for the geometric ergodicity result in Theorem~\ref{th:331Vest}.

\begin{theorem}[{Irreducibility and Aperiodicity - \cite[Theorem~5.1 and Corollary ~5.2]{Stelzer10}}]\label{th:Th51JS}
Let $Y$ be a MUCOGARCH volatility process driven by a L\'evy process whose discrete part is a compound Poisson process $L$ with $A \in G L_{d}(\mathbb{R})$ and  $\Re(\sigma(B))<0$. If the jump distribution of L has a non-trivial absolutely continuous component equivalent to the Lebesgue measure on $\R^d$ restricted to an open neighborhood of zero, then $Y$ is irreducible w.r.t. the Lebesgue measure restricted to an open neighborhood of zero in $
\mathbb{S}_{d}^{+}$ and aperiodic.

\end{theorem}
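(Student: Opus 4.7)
The plan is to exploit the contractive nature of the deterministic flow of $Y$ between jumps together with the rich jump structure inherited from the compound Poisson driver. Between consecutive jumps of $L$, \eqref{eq:dYt} reduces to the linear matrix ODE $\dot{Y}_t = B Y_t + Y_t B^*$ with solution $Y_t = e^{Bt} Y_0 e^{B^*t}$. Since $\Re(\sigma(B)) < 0$, this flow contracts $Y$ toward $0$ exponentially fast, and because the inter-arrival times of a compound Poisson process have a strictly positive density on $\R^+$, the event that no jump occurs on $[0,T]$ has positive probability for every $T>0$. Hence, from any $Y_0 \in \mathbb{S}_d^+$ the process reaches an arbitrarily small neighborhood of $0$ with positive probability.

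To upgrade this into irreducibility with respect to Lebesgue measure on an open neighborhood of $0$ in $\mathbb{S}_d^+$, I would analyze an $N$-step transition kernel. At each jump of $L$ of size $x$ the volatility receives the rank-one increment $A V_{t-}^{1/2} x x^* V_{t-}^{1/2} A^*$. Once $Y_{t-}$ is small, $V_{t-}^{1/2}$ is close to the positive definite $C^{1/2}$, and since $A \in GL_d(\R)$, the map $x \mapsto A V_{t-}^{1/2} x$ is a linear isomorphism of $\R^d$. I would then build a scheme of $N \geq d(d+1)/2$ jumps interspersed with controllable waiting times $s_1,\dots,s_N$ and show, by a Jacobian/submersion argument, that the smooth map $(x_1,\dots,x_N,s_1,\dots,s_N) \mapsto Y_{\text{final}}$ is locally surjective onto an open subset of $\mathbb{S}_d^+$ at a suitable base configuration. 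Combined with the hypothesis that the jump distribution has an absolutely continuous component equivalent to Lebesgue measure on a neighborhood of $0 \in \R^d$, together with the positivity of the waiting-time density, a change-of-variables argument then yields that the $N$-step kernel has a component absolutely continuous with respect to Lebesgue measure on an open subset of $\mathbb{S}_d^+$, giving the required $\mu$-irreducibility. Aperiodicity is then essentially free: because the waiting times are absolutely continuous on $(0,\infty)$, the set of times $t$ for which the transition kernel from a reachable point charges a fixed open neighborhood of the target is an interval rather than a discrete arithmetic progression, ruling out any period strictly greater than one.

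The main obstacle is the submersion step: one must select the base configuration so that the tangent directions produced by perturbing each $(x_i,s_i)$, after conjugation by the intervening flow $Y \mapsto e^{Bs} Y e^{B^*s}$, span all of $\mathbb{S}_d$. This exploits $A \in GL_d(\R)$, which ensures that every rank-one symmetric matrix is reachable as a jump increment, together with the spectral properties of $B$, which make the conjugated rank-one directions vary enough to fill $\mathbb{S}_d$. The bookkeeping is complicated by the implicit dependence of each $V_{t-}^{1/2}$ on the preceding trajectory and by the need to remain within the positive semidefinite cone throughout the chosen path; these are essentially the technical points carried out in detail in \cite{Stelzer10}.
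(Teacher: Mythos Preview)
The paper does not prove this theorem at all: it is quoted verbatim from \cite[Theorem~5.1 and Corollary~5.2]{Stelzer10} and no argument is given in the present paper. There is therefore no proof here to compare your proposal against.

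That said, your sketch correctly identifies the architecture of such a proof: the exponential contraction of the linear flow $Y\mapsto e^{Bt}Ye^{B^*t}$ between jumps (using $\Re(\sigma(B))<0$), the positive probability of long inter-jump waiting times, the local surjectivity of a multi-jump map onto an open set in $\mathbb{S}_d^+$ via a submersion argument exploiting $A\in GL_d(\R)$ and the absolutely continuous jump component, and finally aperiodicity from the continuous waiting-time distribution. You also correctly flag the delicate point, namely verifying that the tangent directions generated by perturbing the jump sizes and waiting times span $\mathbb{S}_d$ at a well-chosen base configuration, and you acknowledge that this is precisely what is carried out in the cited reference. So your proposal is a faithful high-level summary of the argument in \cite{Stelzer10}, but it is not something the present paper attempts.
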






\subsection{Sufficient conditions for identifiability}\label{se:identif}

In this Section we investigate the identifiability of the model parameters from the model moments, i.e., we investigate the injectivity of the map $\theta \mapsto k_{\theta,r}$ on an appropriate compact set $\Theta$. Recall that we can divide the this map into the composition of $\theta \mapsto (A_\theta,B_\theta,C_\theta) \mapsto k_{\theta,r}$. Injectivity of $\theta \mapsto (A_\theta,B_\theta,C_\theta)$ holds if e.g. it simply maps the entries of $\theta$ to the entries of the matrices $(A_\theta,B_\theta,C_\theta)$. Thus, we only need to investigate the injectivity of the map $(A_\theta,B_\theta,C_\theta) \mapsto k_{\theta,r}$. As we will see, there will appear some restrictions on the matrices $A_\theta,B_\theta$, which are related to the fact that we need to take the logarithm of a matrix exponential, and we need to ensure this is well defined. We will omit $\theta$ from the notation, except when explicitly needed. We start with the identifiability of the matrix $C$.
\begin{lemma}\label{le:idC}
Assume that Assumptions \ref{as:Elzero}-\ref{as:Equadquad}, \ref{as:BCcurlInv} and \ref{as:YtsecSt} hold and that $\sigma(B) \subset \{ z \in \C: \Re(z)<0 \}$. If the matrices $A$ and $B$ are known, then $\E(\bsg_1 \bsg_1^\ast)$ uniquely determines $C$. 
\end{lemma}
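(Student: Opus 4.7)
The plan is to write the first moment equation from Proposition~\ref{pr:SecStrV0} as a linear equation in $\vect(C)$ and then to verify that the associated linear operator on $\R^{d^2}$ is invertible under the stated spectral hypothesis on $B$.

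Combining the formulae for $\var(\bsg_1)$ and $\E(\vect(Y_0))$ in Proposition~\ref{pr:SecStrV0} and applying $\vect$ yields
\begin{equation*}
\vect\bigl(\E(\bsg_1\bsg_1^\ast)\bigr) \;=\; (\sigma_L+\sigma_W)\Delta\bigl[I_{d^2} - \sigma_L\,\calb^{-1}(A\otimes A)\bigr]\vect(C).
\end{equation*}
Thus it suffices to prove that $M:=I_{d^2}-\sigma_L\calb^{-1}(A\otimes A)$ is invertible, in which case $\vect(C)$ is recovered uniquely from $\vect(\E(\bsg_1\bsg_1^\ast))$ by multiplying with $M^{-1}/((\sigma_L+\sigma_W)\Delta)$.

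The key algebraic step is to use the definition \eqref{eq:def:Calb} of $\calb$ to write $\sigma_L(A\otimes A)=\calb-(B\otimes I+I\otimes B)$, so that
\begin{equation*}
M \;=\; I_{d^2}-\calb^{-1}\bigl[\calb-(B\otimes I+I\otimes B)\bigr] \;=\; \calb^{-1}(B\otimes I+I\otimes B).
\end{equation*}
Since $\calb$ is invertible by \ref{as:BCcurlInv}, the invertibility of $M$ reduces to that of $B\otimes I+I\otimes B$. Its spectrum consists of the sums $\lambda_i+\lambda_j$ with $\lambda_i,\lambda_j\in\sigma(B)$, and the assumption $\sigma(B)\subset\{z\in\C:\Re(z)<0\}$ gives $\Re(\lambda_i+\lambda_j)<0$, so $0\notin\sigma(B\otimes I+I\otimes B)$, and $M$ is invertible. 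This concludes the argument.

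There is essentially no obstacle here beyond the algebraic identity that expresses $M$ as $\calb^{-1}(B\otimes I+I\otimes B)$; once this is observed, the spectral condition on $B$ does the rest. Note that it is the extra hypothesis $\sigma(B)\subset\{\Re(z)<0\}$, which is strictly stronger than the bare $\sigma(\calb)\subset\{\Re(z)<0\}$ required in \ref{as:BCcurlInv}, that is used to rule out a zero eigenvalue of $B\otimes I+I\otimes B$.
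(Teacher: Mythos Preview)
Your proof is correct and follows essentially the same approach as the paper: both combine \eqref{eq:EV} and \eqref{eq:EG1}, use the identity $\sigma_L(A\otimes A)=\calb-(B\otimes I+I\otimes B)$, and then invoke $\sigma(B\otimes I+I\otimes B)=\sigma(B)+\sigma(B)\subset\{\Re(z)<0\}$ to obtain invertibility. The only cosmetic difference is that the paper writes down the inverse formula $\vect(C)=(\sigma_L+\sigma_W)^{-1}\Delta^{-1}(B\otimes I+I\otimes B)^{-1}\calb\,\vect(\E(\bsg_1\bsg_1^\ast))$ directly, whereas you first express the forward map and then argue it is invertible; the content is identical.
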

\begin{proof}
Since $\sigma(B\otimes I + I \otimes B) = \sigma(B) + \sigma(B) \subset \{ z \in \C: \Re(z)<0 \}$, the matrix $B\otimes I + I \otimes B$ is invertible. The rest of the proof follows by noting that from \eqref{eq:EV} and \eqref{eq:EG1} it follows that
\begin{equation*}\label{eq:EG1forC}
\vect(C)  = (\sigma_L+\sigma_W)^{-1} \Delta^{-1} (B\otimes I + I \otimes B)^{-1}\calb\vect(  \E(\bsg_1 \bsg_1^\ast)).
\end{equation*}
\end{proof}
For the identification of the matrices $A$ and $B$ we need to use the second-order structure of the squared returns process in Lemma~\ref{le:acovgg}. We first state three auxiliary results, which provide conditions such that we can identify the components of the autocovariance function in \eqref{eq:acvgg}.

\begin{lemma}
Assume that $B \in M_d(\R)$ is diagonalizable with $S \in GL_d(\C)$ such that $S^{-1}BS$ is diagonal. If
\begin{equation}\label{eq:baainv}
\bigg| \frac{\sigma_L - \sigma_W}{2\sigma_L}\bigg| \|A \otimes A \|_{S}< -2 \max\{ \Re(\sigma(B))\},
\end{equation}
with
\begin{equation}\label{eq:defXYn}
\|X\|_{S} = \|(S^{-1} \otimes S^{-1}) X (S \otimes S))\|_2, \quad 
X \in M_{d^2}(\R), S \in GL_d(\C),
\end{equation}
then the matrix 
\begin{equation}\label{eq:bm2a}
(\sigma_W + \sigma_L)(\calb^{\ast})^{-1} - 2((A \otimes A)^{\ast})^{-1}
\end{equation}
is invertible.
\end{lemma}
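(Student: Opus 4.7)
My plan is to algebraically reduce the question of invertibility of the matrix in \eqref{eq:bm2a} to that of a perturbation of $M:=B\otimes I+I\otimes B$, and then apply a Neumann-series argument exploiting the diagonalisability of $B$.

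First, since both $A\otimes A$ and $\calb$ are invertible (by (b.1) and Assumption \ref{as:BCcurlInv}), I would multiply the matrix in \eqref{eq:bm2a} on the left by $\calb^{\ast}$ and on the right by $(A\otimes A)^{\ast}$; this operation does not affect invertibility and yields $(\sigma_W+\sigma_L)(A\otimes A)^{\ast}-2\calb^{\ast}$. Substituting $\calb^{\ast}=M^{\ast}+\sigma_L(A\otimes A)^{\ast}$ and transposing then reduces the task to showing that $M+\tfrac{\sigma_L-\sigma_W}{2}(A\otimes A)$ is invertible. Crucially, the $\sigma_L(A\otimes A)$ contribution inside $\calb$ cancels in this reduction, leaving only the smaller perturbation coefficient $\tfrac{\sigma_L-\sigma_W}{2}$ in front of $A\otimes A$.

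Next, using $B=SDS^{-1}$ and setting $T:=S\otimes S$, I would observe that $T^{-1}MT=D\otimes I+I\otimes D=:\tilde D$ is a diagonal matrix with entries $\lambda_i+\lambda_j$ for $\lambda_i,\lambda_j\in\sigma(B)$. Since $\Re(\lambda_i+\lambda_j)<0$, both $\tilde D$ and $M$ are invertible, and I can factor
\begin{equation*}
M+\tfrac{\sigma_L-\sigma_W}{2}(A\otimes A)=M\bigl[I+\tfrac{\sigma_L-\sigma_W}{2}M^{-1}(A\otimes A)\bigr].
\end{equation*}
Writing $N:=T^{-1}(A\otimes A)T$, the definition \eqref{eq:defXYn} gives $\|N\|_2=\|A\otimes A\|_S$, and a submultiplicative estimate after conjugation by $T$ yields $\|M^{-1}(A\otimes A)\|_S=\|\tilde D^{-1}N\|_2\le\|\tilde D^{-1}\|_2\,\|A\otimes A\|_S$. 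Since $\tilde D$ is diagonal, $\|\tilde D^{-1}\|_2=1/\min_{i,j}|\lambda_i+\lambda_j|$, and the elementary inequality $|\lambda_i+\lambda_j|\ge-\Re(\lambda_i+\lambda_j)\ge-2\max\Re\sigma(B)$ gives $\|\tilde D^{-1}\|_2\le1/(-2\max\Re\sigma(B))$.

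Finally, combining these bounds with the hypothesis \eqref{eq:baainv} yields $\bigl\|\tfrac{\sigma_L-\sigma_W}{2}\tilde D^{-1}N\bigr\|_2<1$, so the Neumann series for $[I+\tfrac{\sigma_L-\sigma_W}{2}\tilde D^{-1}N]^{-1}$ converges in the 2-norm; hence the bracketed operator, and therefore the original matrix \eqref{eq:bm2a}, is invertible. The main subtlety is the opening reduction: one must recognise the correct two-sided multiplication so that the $\sigma_L(A\otimes A)$ contribution inside $\calb$ cancels, leaving $M$ as the dominant part and a perturbation whose size is directly controlled by \eqref{eq:baainv}; once this cancellation is in hand, the rest is a standard diagonalisation-and-Neumann perturbation argument.
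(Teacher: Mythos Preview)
Your proposal is correct and follows essentially the same route as the paper. Both arguments first reduce the question to the invertibility of $M+\tfrac{\sigma_L-\sigma_W}{2}(A\otimes A)$ with $M=B\otimes I+I\otimes B$ (the paper via the identity ``$X^{-1}+Y^{-1}$ non-singular $\Leftrightarrow X+Y$ non-singular'' from \cite{Bernstein05}, you via explicit two-sided multiplication by $\calb^{\ast}$ and $(A\otimes A)^{\ast}$), and both then diagonalise $M$ by $S\otimes S$. The only difference is the final perturbation step: the paper invokes the Bauer--Fike theorem to place all eigenvalues of the perturbed matrix in the open left half-plane, whereas you use a Neumann-series bound on $\tilde D^{-1}N$ to get invertibility directly. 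These two devices yield the same sufficient inequality on $\|A\otimes A\|_S$, so the approaches are equivalent for the purposes of the lemma; Bauer--Fike happens to give the extra information that the eigenvalues have negative real part, but that is not used here.
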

\begin{proof}
From \cite[fact~2.16.14]{Bernstein05}, $X^{-1} + Y^{-1}$ is non-singular if and only if $X + Y$ is non-singular and $X,Y$ are non-singular. Setting $X = \frac{\calb}{(\sigma_L + \sigma_W)}, Y = -\frac{1}{2}(A \otimes A)$ and using the definition of $\calb$ in \eqref{eq:def:Calb} we get
$$
X + Y = \frac{1}{(\sigma_L+\sigma_W)}\bigg((B \otimes I + I \otimes B) + \frac{(\sigma_L-\sigma_W)}{2}(A \otimes A)\bigg).
$$
Since $B$ is diagonalizable, we can use \cite[Proposition~7.1.6]{Bernstein05} to obtain
$$
B \otimes I + I \otimes B = (S \otimes S )( S^{-1}BS \otimes I ) (S^{-1} \otimes S^{-1}),
$$
which guarantees that $B \otimes I + I \otimes B$ is also diagonalizable. Now we rewrite the first equation on p. 106 in \cite{Stelzer10} with the matrix $\calb$ replaced by $(B \otimes I + I \otimes B) + \frac{(\sigma_L-\sigma_W)}{2}(A \otimes A)$ and apply the Bauer-Fike Theorem \cite[Theorem~6.3.2]{horn1991topics} to see that \eqref{eq:baainv} implies that all eigenvalues of $(X+Y)(\sigma_L+\sigma_W)$ are in $\{ z \in \C: \Re(z)<0 \}$ and, therefore, $X + Y$ is invertible.
\end{proof}

\begin{lemma}\label{le:auxLem}
If $A \in \md$ is such that $A_{(1,1)},\dots,A_{(1,{j-1})} = 0$ and $A_{(1,j)} > 0$ for some $j \in \{1,\dots,d\}$, then the map $X\mapsto AXA^T$ for $X \in \S_d$ identifies $A$.
\begin{proof}
Assume first that $A_{(1,1)} > 0$. For each $i \in \{1,\dots,d\}$, let $e_i$ be the $i$th column unit vector in $\R^d$ and define the matrix $E^{(i,j)} = e_i e_j ^T$. The first line of the matrix $A E^{(1,1)} A^T$ is
\begin{equation}\label{eq:1line}
(A_{(1,1)}^2 , A_{(1,1)}A_{(2,1)}, \dots, A_{(1,1)}A_{(d,1)}).
\end{equation}
Since $A_{(1,1)} > 0$, \eqref{eq:1line} allows us to identify first $A_{(1,1)}$ and then $A_{(2,1)},\dots,A_{(d,1)}$. Now, for each $k \in \{2,\dots,d\}$, note that $E^{(1,k)} + E^{(k,1)}$ is symmetric. Simple calculations reveal that the first line of the matrix $A (E^{(1,k)} + E^{(k,1)}) A^T$ is
\begin{equation}\label{eq:1line2}
(2 A_{(1,1)}A_{(1,k)}, A_{(1,1)}A_{(2,k)} + A_{(1,k)}A_{(2,1)}, \dots, A_{(1,1)}A_{(d,k)} + A_{(1,k)}A_{(d,1)}).
\end{equation}
Since $A_{(1,1)} > 0$, we identify $A_{(1,k)}$ from the first entry of \eqref{eq:1line2}. Now, since also
$A_{(2,1)},\dots,A_{(d,1)}$ are already known, we can identify $A_{(2,k)},\dots,A_{(d,k)}$. Thus, all entries of A can be identified. The cases $A_{(1,j)} > 0$ for some $j > 1$ follow similarly.
\end{proof}
\end{lemma}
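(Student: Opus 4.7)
The plan is to probe the map $X \mapsto AXA^T$ at carefully chosen symmetric basis matrices of $\S_d$, namely $X = E^{(j,j)} = e_j e_j^T$ and $X = E^{(j,k)} + E^{(k,j)}$ for $k \neq j$. Evaluating at these inputs produces outer products (or sums of outer products) of the columns $A_j, A_k$ of $A$, and reading off only the first rows yields linear equations that, combined with the sign hypothesis $A_{(1,j)} > 0$, fix the unknown entries one column at a time.

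First I would compute $A E^{(j,j)} A^T = A_j A_j^T$, whose first row is $(A_{(1,j)}^2, A_{(1,j)} A_{(2,j)}, \dots, A_{(1,j)} A_{(d,j)})$. Since $A_{(1,j)} > 0$, the $(1,1)$-entry identifies $A_{(1,j)} = \sqrt{A_{(1,j)}^2}$ uniquely, and dividing each subsequent entry by this known positive value recovers $A_{(2,j)}, \dots, A_{(d,j)}$; thus the full column $A_j$ is determined. Next, for each $k \neq j$, evaluating at $E^{(j,k)} + E^{(k,j)}$ yields $A_j A_k^T + A_k A_j^T$, whose first row equals $A_{(1,j)} A_k^T + A_{(1,k)} A_j^T$. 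The $(1,1)$-entry $2 A_{(1,j)} A_{(1,k)}$ determines $A_{(1,k)}$ (in particular reaffirming $A_{(1,k)}=0$ for $k < j$), and the remaining first-row entries $A_{(1,j)} A_{(m,k)} + A_{(1,k)} A_{(m,j)}$ for $m \geq 2$ are linear equations in the single unknown $A_{(m,k)}$ with the known nonzero coefficient $A_{(1,j)}$, so column $A_k$ can be recovered entry by entry.

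The vanishing condition $A_{(1,1)} = \cdots = A_{(1,j-1)} = 0$ is not strictly required for the reconstruction itself but fixes a canonical pivot position $j$ so that the lemma has unambiguous content when comparing two admissible matrices; the strict positivity $A_{(1,j)} > 0$ is essential, since otherwise the map cannot distinguish $A$ from $-A$. I foresee no genuine technical obstacle: the key observation is that the cross-term probe $E^{(j,k)} + E^{(k,j)}$, rather than $E^{(k,k)}$ (which would leave a sign ambiguity on $A_k$), lets the already-known column $A_j$ anchor both the sign and the magnitude of each subsequent column through a single linear equation per entry.
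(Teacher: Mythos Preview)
Your proposal is correct and follows essentially the same approach as the paper's proof: probe the map at $E^{(j,j)}$ to recover the $j$-th column of $A$ from the first row of $A_jA_j^T$, then at the symmetric matrices $E^{(j,k)}+E^{(k,j)}$ to recover the remaining columns via the first row of $A_jA_k^T+A_kA_j^T$. The only cosmetic difference is that the paper writes out the case $j=1$ explicitly and then asserts that the general case is analogous, whereas you treat the general pivot index $j$ directly.
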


\begin{lemma}\label{le:IdeExp}
Assume that the Assumptions \textbf{a},\textbf{b} and \textbf{c} and the conditions of Lemma~\ref{le:auxLem} hold, that the matrix in \eqref{eq:bm2a} is invertible, that $\sigma(\calb) \subset \{ z \in \C: -\pi < \Im(z)\Delta < \pi, \Re(z)<0 \}$ and that $\var(\vech(V_0))$ is invertible. Define $M = (e^{\calb \Delta})^{-1}\acov_{\theta,\bm{G}\bm{G}^{\ast}}(1)$. Then, $\acov_{\theta,\bm{G}\bm{G}^{\ast}}(1)$ and
$\acov_{\theta,\bm{G}\bm{G}^{\ast}}(2)$ uniquely identify $\calb$ and $M$.
\end{lemma}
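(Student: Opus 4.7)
The plan is to recover $e^{\calb\Delta}$ from the matrix relation $\acov_{\bm{G}\bm{G}^{\ast}}(2) = e^{\calb\Delta}\acov_{\bm{G}\bm{G}^{\ast}}(1)$, invert via the principal matrix logarithm under the strip hypothesis on $\sigma(\calb)$, and then recover $M$ from its definition.

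First I would analyse the range/kernel structure of $\acov_{\bm{G}\bm{G}^{\ast}}(1)$. From \eqref{eq:acvgg}, write $\acov_{\bm{G}\bm{G}^{\ast}}(1) = P\,\var(\vect(V_0))\,Q$ with
\[
P = (\sigma_L+\sigma_W)e^{\calb\Delta}\calb^{-1}(I_{d^2}-e^{-\calb\Delta}),\quad Q = (e^{\calb^\ast\Delta}-I_{d^2})[(\sigma_W+\sigma_L)(\calb^\ast)^{-1}-2((A\otimes A)^\ast)^{-1}].
\]
Both $P$ and $Q$ are invertible: $e^{\calb\Delta}$ always, $\calb$ by Assumption~b.2, $I_{d^2}-e^{-\calb\Delta}$ because the strip hypothesis on $\sigma(\calb)$ ensures $\sigma(\calb\Delta)\cap 2\pi i\Z=\emptyset$, and the bracket in $Q$ by the lemma's assumption on \eqref{eq:bm2a}. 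Since $\vect(V_0)=D_d\vech(V_0)$ with $D_d$ the duplication matrix of full column rank and $\var(\vech(V_0))$ invertible, $\var(\vect(V_0))$ has rank $d(d+1)/2$, with range $\vect(\S_d)$ and kernel $\vect(\S_d)^{\perp}$. A direct vec-identity check on $X\mapsto BX+XB^\ast+\sigma_L AXA^\ast$ shows that $\calb$, and hence $P$, $Q$, and $e^{\calb\Delta}$, preserve the orthogonal splitting $\R^{d^2}=\vect(\S_d)\oplus\vect(\S_d)^{\perp}$. It follows that the restriction $\acov_{\bm{G}\bm{G}^{\ast}}(1)|_{\vect(\S_d)}:\vect(\S_d)\to\vect(\S_d)$ is a bijection.

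Second, since $e^{\calb\Delta}$ preserves $\vect(\S_d)$, the identity $\acov_{\bm{G}\bm{G}^{\ast}}(2)=e^{\calb\Delta}\acov_{\bm{G}\bm{G}^{\ast}}(1)$ yields
\[
e^{\calb\Delta}|_{\vect(\S_d)} = \acov_{\bm{G}\bm{G}^{\ast}}(2)|_{\vect(\S_d)}\,\bigl(\acov_{\bm{G}\bm{G}^{\ast}}(1)|_{\vect(\S_d)}\bigr)^{-1},
\]
uniquely computable from the data. The strip assumption $\sigma(\calb)\Delta\subset\{-\pi<\Im(z)<\pi,\ \Re(z)<0\}$ ensures that the eigenvalues $e^{\lambda\Delta}$ of $e^{\calb\Delta}|_{\vect(\S_d)}$ satisfy $|e^{\lambda\Delta}|\in(0,1)$ and $\arg(e^{\lambda\Delta})\in(-\pi,\pi)$, so they avoid the branch cut of the principal matrix logarithm on the closed non-positive real axis. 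Applying the principal logarithm therefore returns $\calb\Delta|_{\vect(\S_d)}$ uniquely; combined with the parametric form of $\calb$ and its block-diagonality with respect to the above splitting, this identifies $\calb$ as a full element of $\mdtwodtwo$. Finally, with $\calb$ in hand, $M=(e^{\calb\Delta})^{-1}\acov_{\bm{G}\bm{G}^{\ast}}(1)$ is immediate from the definition.

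I expect the main obstacle to be the rank deficiency of $\acov_{\bm{G}\bm{G}^{\ast}}(1)$ on $\R^{d^2}$ (it has rank only $d(d+1)/2$ because $V$ is symmetric-valued): the inversion has to be carried out on the invariant subspace $\vect(\S_d)$, which requires checking carefully that every factor in \eqref{eq:acvgg} respects the decomposition $\R^{d^2}=\vect(\S_d)\oplus\vect(\S_d)^{\perp}$. The second delicate point is ensuring uniqueness of the matrix logarithm — this is exactly what the strip condition $-\pi<\Im(\sigma(\calb))\Delta<\pi$ is designed to give.
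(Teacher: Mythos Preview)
Your proposal is correct and follows essentially the same route as the paper: both arguments recognise that $\acov_{\bm{G}\bm{G}^\ast}(1)$ is only invertible as an operator on $\S_d$ (equivalently on $\vect(\S_d)$), recover $e^{\calb\Delta}$ there via $\acov_{\bm{G}\bm{G}^\ast}(2)[\acov_{\bm{G}\bm{G}^\ast}(1)]^{-1}$, and then take the unique principal logarithm under the strip hypothesis. Your explicit verification that each factor in \eqref{eq:acvgg} preserves the splitting $\vect(\S_d)\oplus\vect(\S_d)^\perp$ is exactly the concrete counterpart of the paper's abstract remark that the $d^2\times d^2$ matrices represent linear operators on $\S_d$; the only place where you are slightly less precise than the paper is the passage from $\calb|_{\vect(\S_d)}$ back to the full $\calb\in M_{d^2}(\R)$, which the paper handles by citing \cite[Proposition~3.1]{PigorschetStelzer2007b} together with Lemma~\ref{le:auxLem} rather than by invoking ``the parametric form of $\calb$''.
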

\begin{proof}
Since $M$ is given in terms of $\calb$ and $\acov_{\theta,\bm{G}\bm{G}^{\ast}}(1)$, we only need to identify $\calb$. Observe that we are using the $\vect$ operator only for convenience, as it interacts nicely with tensor products of matrices and thus gives nicely looking formulae. However, the volatility and ``squared returns'' processes take values in $\mathbb{S}_d$ which is a $d(d+1)/2$-dimensional vector space, whereas the $\vect$ operator assumes values in a $d^2$-dimensional vector space. Instead of using the $\vech$ operator and cumbersome notation, we take an abstract point of view. The variance of a random element of $\mathbb{S}_d$ is a symmetric positive semi-definite linear operator from  $\mathbb{S}_d$ to itself. Likewise, the autocovariance of $\bm{G}_1\bm{G}_1^{\ast}$ and $\bm{G}_{1+h}\bm{G}_{1+h}^{\ast}$ is a linear operator from $\mathbb{S}_d$ to itself. The condition that $\var(\vech(V_0))$ is invertible is equivalent to the invertibility of the linear operator, which is the variance of $V_0$. Similarly all other $d^2\times d^2$ matrices in
$$e^{\calb \Delta h} \calb^{-1}(I_{d^2} - e^{-\calb \Delta})(\sigma_L + \sigma_W) \var (\vect (V_0))  
  (e^{\calb^{\ast}\Delta} - I_{d^2})[ (\sigma_W + \sigma_L)(\calb^{\ast})^{-1} - 2((A \otimes A)^{\ast})^{-1} ]$$  are representing linear operators from  $\mathbb{S}_d$ to itself. Under the assumptions made, the above product involves only invertible linear operators. Hence $\acov_{\theta,\bm{G}\bm{G}^{\ast}}(h)$ is invertible (over $\mathbb{S}_d$) for every $h>0$. Thus,
$$
e^{\calb \Delta } =\acov_{\theta,\bm{G}\bm{G}^{\ast}}(2) [\acov_{\theta,\bm{G}\bm{G}^{\ast}}(1)]^{-1}.
$$
By the assumptions on the eigenvalues of $\calb$ there is a unique logarithm for $e^{\calb \Delta }$ (see \cite[Section 6.4]{horn1991topics} or \cite[Lemma 3.11]{Schlemm12}), so $\calb \Delta$ and thus $\calb$ is identified. Finally, note that the matrices in the $\vect$ representations are uniquely identified by the employed linear operators on $\mathbb{S}_d$ due to \cite[Proposition 3.1]{PigorschetStelzer2007b} and Lemma~\ref{le:auxLem}.
\end{proof}

\begin{lemma}
[Identifiability of $A$, $B$ and $C$]\label{le:IdABFrCB} For all $\theta \in \Theta$, assume the conditions of Lemma~\ref{le:IdeExp}, $\sigma(B_\theta) \subset \{ z \in \C: \Re(z)<0 \}$ and that the entries of the matrices $A_\theta$ and $B_\theta$ satisfy: for some $k \not= l \in \{1,\dots,d\}$,  $A_{(k,l),\theta} > 0$, $A_{(k,l),\theta} \not= A_{(l,k),\theta}$ and $B_{(k,l),\theta} = B_{(l,k),\theta}$. Then $k_{\theta,2}$ uniquely identifies $A_\theta$, $B_\theta$ and $C_\theta$.
\end{lemma}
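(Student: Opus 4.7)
The plan is to split the identification into three steps matching the three matrix parameters: (i) recover $\calb_\theta$ from the autocovariance structure of the squared returns, (ii) decompose $\calb_\theta$ entrywise into $A_\theta$ and $B_\theta$ using the structural hypotheses at positions $(k_0,l_0)$ and $(l_0,k_0)$, and (iii) apply Lemma~\ref{le:idC} to identify $C_\theta$ from the first moment of $\bsg_1$.

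For step (i), from the definition \eqref{eq:defEk1} with $r=2$ one reads off $\E\vect(\bsg_1\bsg_1^*)$ together with the cross moments $\E[\vect(\bsg_1\bsg_1^*)\vect(\bsg_{1+h}\bsg_{1+h}^*)^*]$ for $h\in\{0,1,2\}$; after subtracting the appropriate outer products of first moments (and transposing if necessary) one obtains $\acov_{\bm{G}\bm{G}^*}(1)$ and $\acov_{\bm{G}\bm{G}^*}(2)$. Lemma~\ref{le:IdeExp} then delivers $\calb_\theta$.

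For step (ii), dropping $\theta$ for brevity and writing $\calb = B\otimes I_d + I_d\otimes B + \sigma_L A\otimes A$ in terms of individual matrix entries gives, for $i,j,k,l\in\{1,\ldots,d\}$,
\begin{equation*}
\calb_{(i-1)d+k,\,(j-1)d+l} \;=\; B_{(i,j)}\delta_{kl} + \delta_{ij}B_{(k,l)} + \sigma_L A_{(i,j)}A_{(k,l)}.
\end{equation*}
When $i\neq j$ and $k\neq l$ both Kronecker deltas vanish, so the entry equals $\sigma_L A_{(i,j)}A_{(k,l)}$. Taking $(i,j,k,l)=(k_0,l_0,k_0,l_0)$ yields $\sigma_L A_{(k_0,l_0)}^2$, and the positivity $A_{(k_0,l_0)}>0$ fixes the sign; varying $(k,l)$ with $k\neq l$ then recovers every off-diagonal entry of $A$. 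For the diagonal entries of $A$ we use the key subtraction trick: for each $k\in\{1,\ldots,d\}$,
\begin{align*}
\calb_{(k_0-1)d+k,\,(l_0-1)d+k} &= B_{(k_0,l_0)} + \sigma_L A_{(k_0,l_0)}A_{(k,k)},\\
\calb_{(l_0-1)d+k,\,(k_0-1)d+k} &= B_{(l_0,k_0)} + \sigma_L A_{(l_0,k_0)}A_{(k,k)},
\end{align*}
and the hypothesis $B_{(k_0,l_0)}=B_{(l_0,k_0)}$ makes the $B$-terms cancel under subtraction, leaving $\sigma_L(A_{(k_0,l_0)}-A_{(l_0,k_0)})A_{(k,k)}$; the hypothesis $A_{(k_0,l_0)}\neq A_{(l_0,k_0)}$ then lets us solve for $A_{(k,k)}$. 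With $A$ fully determined, $B_{(i,j)}$ for $i\neq j$ follows from $B_{(i,j)} = \calb_{(i-1)d+k,\,(j-1)d+k} - \sigma_L A_{(i,j)}A_{(k,k)}$ (any fixed $k$), and the diagonal entries from $B_{(i,i)} = \tfrac12\bigl(\calb_{(i-1)d+i,\,(i-1)d+i} - \sigma_L A_{(i,i)}^2\bigr)$.

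For step (iii), with $A_\theta$ and $B_\theta$ identified the spectral hypothesis $\sigma(B_\theta)\subset\{z:\Re(z)<0\}$ makes $B_\theta\otimes I_d + I_d\otimes B_\theta$ invertible, and Lemma~\ref{le:idC} identifies $C_\theta$ from $\E\vect(\bsg_1\bsg_1^*)$, which is the first block of $k_{\theta,2}$. The main obstacle is step (ii): diagonal entries of $A_\theta$ are entangled with off-diagonal $B_\theta$-contributions inside the same entry of $\calb_\theta$, and disentangling them is precisely what the combined symmetry/asymmetry hypotheses on $B_\theta$ and $A_\theta$ at positions $(k_0,l_0)$ and $(l_0,k_0)$ are designed to enable.
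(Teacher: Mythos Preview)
Your proposal is correct and follows essentially the same route as the paper: recover $\calb_\theta$ from the autocovariances via Lemma~\ref{le:IdeExp}, read off $A_\theta$ and $B_\theta$ entrywise from $\calb_\theta$ using the subtraction trick enabled by $B_{(k_0,l_0)}=B_{(l_0,k_0)}$ and $A_{(k_0,l_0)}\neq A_{(l_0,k_0)}$, and then invoke Lemma~\ref{le:idC} for $C_\theta$. The only cosmetic differences are that the paper sets $\sigma_L=1$ without loss of generality, treats $d=2$ and $d>2$ separately, and extracts the diagonal entries $A_{(k,k)}$ from the diagonal blocks $\calb^{(k,k)}$ at off-diagonal inner positions rather than, as you do, from the off-diagonal blocks at diagonal inner positions---but both index choices exploit the same cancellation and are equivalent.
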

\begin{proof}
Recall that we omit $\theta$ in the notation. Assume w.l.o.g that $\sigma_L = 1$. Because of Lemma~\ref{le:idC}, we only need to show the identification of $A$ and $B$.\\ 
Assume first that $d = 2$. Then the matrix $\calb$ from \eqref{eq:def:Calb} is
\begin{footnotesize}
\begin{equation}\label{eq:calb2}
\begin{pmatrix*}[r]
2B_{(1,1)} + A_{(1,1)}^2  & B_{(1,2)} + A_{(1,1)}A_{(1,2)} & B_{(1,2)} + A_{(1,1)}A_{(1,2)} & A_{(1,2)}^2 \\
B_{(2,1)} + A_{(1,1)}A_{(2,1)}  & B_{(1,1)} + B_{(2,2)} + A_{(1,1)}A_{(2,2)} & A_{(1,2)}A_{(2,1)}  & B_{(1,2)} + A_{(1,2)}A_{(2,2)} \\
B_{(2,1)} + A_{(1,1)}A_{(2,1)}  &  A_{(1,2)}A_{(2,1)} & B_{(1,1)} + B_{(2,2)} + A_{(1,1)}A_{(2,2)}  & B_{(1,2)} + A_{(1,2)}A_{(2,2)} \\
A_{(2,1)}^2   &  B_{(2,1)} + A_{(2,1)}A_{(2,2)} & B_{(2,1)} + A_{(2,1)}A_{(2,2)} & 2B_{(2,2)} + A_{(2,2)}^2
\end{pmatrix*}.
\end{equation}
\end{footnotesize}
Using the entry at position $(1,4)$ and the fact that $A_{(1,2)} > 0$ allow us to identify $A_{(1,2)}$. Then, we use the entry at position $(2,3)$ to identify $A_{(2,1)}$. Now, we use the entries at positions $(1,2)$ and $(2,1)$ together with the fact that $A_{(1,2)} \not= A_{(2,1)}$ and $B_{(1,2)} = B_{(2,1)}$ to write $A_{(1,1)} = (\calb_{(1,2)} - \calb_{(2,1)}) / (A_{(1,2)} -A_{(2,1)})$. Similarly we use the entries at positions $(3,4), (4,3)$ to get $A_{(2,2)} = (\calb_{(3,4)} - \calb_{(4,3)}) / (A_{(1,2)} -A_{(2,1)})$. Now, since all the entries of $A$ are known, we can use the entries at positions $(1,1), (1,2)$ and $(2,2)$ to identify the entries of $B$.\\
Now assume that $d > 2$. We assume w.l.o.g. that $A_{(1,2)} > 0$, $A_{(1,2)} \not= A_{(2,1)}$ and $B_{(1,2)} = A_{(2,1)}$. Write the matrix $\calb$ from \eqref{eq:def:Calb} in the following block form:
\begin{equation}\label{eq:Bblocks}
  \calb = B\otimes I + I \otimes B + A \otimes A =  \begin{pmatrix}
  \calb^{(1,1)} & \cdots &  \calb^{(1,d)} \\
  \vdots & \ddots & \vdots \\
  \calb^{(d,1)} & \cdots &  \calb^{(d,d)}
  \end{pmatrix},
\end{equation}
where $\calb^{(i,j)} \in M_{d}(\R)$ for all $i,j=1,\dots,d$. First, we have that
\begin{equation}\label{eq:bcurlblock1d}
   \calb^{(1,2)} =  \begin{pmatrix*}[r]
B_{(1,2)} + A_{(1,2)}A_{(1,1)}  & A_{(1,2)}A_{(1,2)} & A_{(1,2)}A_{(1,3)} & \cdots &   A_{(1,2)}A_{(1,d)} \\
  A_{(1,2)}A_{(2,1)}  & B_{(1,2)} +  A_{(1,2)}A_{(2,2)} & A_{(1,2)}A_{(2,3)} & \cdots &   A_{(1,2)}A_{(2,d)} \\  
  \vdots & \vdots & \vdots & \ddots & \vdots \\
A_{(1,2)}A_{(d,1)}  &   A_{(1,2)}A_{(d,2)} & A_{(1,2)}A_{(d,3)} & \cdots &   B_{(1,2)} + A_{(1,2)}A_{(d,d)}   
  \end{pmatrix*},
\end{equation}
Since $A_{(1,2)} > 0$ we can identify it from \eqref{eq:bcurlblock1d}, because  $ \calb^{(1,2)}_{(1,2)} = A_{(1,2)}^2$. Then we use the off-diagonal entries of the matrix $\calb^{(1,2)}$ in \eqref{eq:bcurlblock1d} together with $A_{(1,2)}$ to identify all the off-diagonal entries of the matrix $A$. Next we identify the diagonal entries of $A$. It follows from \eqref{eq:Bblocks} that  
\begin{equation}\label{eq:sysle}
\systeme*{\calb^{(k,k)}_{(1,2)} = B_{(1,2)} + A_{(k,k)}A_{(1,2)},\calb^{(k,k)}_{(2,1)} = B_{(2,1)} + A_{(k,k)}A_{(2,1)}}, \quad k = 1,\dots,d.
\end{equation}
Since $A_{(1,2)}-A_{(2,1)} \not= 0$ and $B_{(1,2)} = B_{(2,1)}$, the system of equations \eqref{eq:sysle} gives
$$
A_{(k,k)} = (\calb^{(k,k)}_{(1,2)} - \calb^{(k,k)}_{(2,1)}) / (A_{(1,2)} -A_{(2,1)}), \quad k = 1,\dots,d.
$$
Finally, since the matrix $A$ is now completely known, we can use \eqref{eq:Bblocks} to identify all entries of $B$. 
\end{proof}

In Lemma~\ref{le:IdABFrCB} we identify the matrices $A$ and $B$ only from $\calb$ and, therefore, some mild restrictions on the off-diagonal entries of $B$ appear. In order to avoid those restrictions, we could to take the structure of $\E \vect(\vect(\bm{G}_1 \bm{G}_1^{\ast})\vect(\bm{G}_1 \bm{G}_1^{\ast})^*)$ in \eqref{eq:vg1g1} into account when proving identifiability and we expect that one can improve the identification results since more moment conditions are used. However, already in the $2$-dimensional case the results on identification conditions are quite involved, and this has mainly to do with the fact that the linear operator $(\bsQ +K_{d} \bsQ  +  I_{d^2})$ at the right hand side of \eqref{eq:vg1g1} is not one-to-one in the space of matrices of the form $\E \vect(V_0) \vect(V_0)^{\ast}$. In the end, in order to use the moment conditions $\E \vect(\vect(\bm{G}_1 \bm{G}_1^{\ast})\vect(\bm{G}_1 \bm{G}_1^{\ast})^*)$, we need to assume that the matrices $B \otimes I + I \otimes B$ and $A \otimes A$ commute (see \cite[Lemma~3.5.18]{Thiago}). Since commutativity is a quite strong condition, it seems highly preferable to work with the class of MUCOGARCH processes, which are identifiable by Lemma~\ref{le:IdABFrCB}. The exponential decay of the autocovariance function of the model is still quite flexible, because of the interplay between the matrices $A$ and $B$ (see \eqref{eq:calb2}, for instance).

\subsection{Asymptotic properties: general case revisited}\label{se:areEA}

Here, we combine the results of Sections~\ref{se:apgmmG}-\ref{se:identif} to give easily verifiable conditions under which the GMM estimator $\hat{\theta}_n$ will be consistent and asymptotically normal. We assume that the parameter $\theta$ contains the entries of the matrices $(A_\theta,B_\theta,C_\theta)$ so that the map $\theta \mapsto (A_\theta,B_\theta,C_\theta)$ is automatically injective and continuously differentiable on $\Theta$.


First, we define
\beam 
\|x\|_{S} &= & \|(S^{-1} \otimes S^{-1}) x\|_2, \quad x \in \R^{d^2}, S \in GL_d(\C) \label{eq:defxYn}\\ 
K_{2,S} &= & \max_{ X \in \mathbb{S}^+_d, \|X\|_2 = 1} \bigg( \frac{\|X\|_2}{\|\vect(X)\|_{S}} \bigg), \quad S \in GL_d(\C) \label{eq:defK2Y}.
\eeam
Consider now the following group of assumptions:

\begin{assumptionf}[Parameter space]
For all $\theta \in \Theta$ it holds:
\mbox{}
\begin{enumerate}[label=$(f.\arabic*)$]

\item\label{as:par6}  The matrices $B_\theta$ satisfy $\sigma(B_\theta) \subset \{ z \in \C:  \Re(z)<0 \}$.

\item\label{as:par1} The matrix $\calb_\theta$ satisfy $\sigma(\calb_\theta) \subset \{ z \in \C: -\pi < \Im(z)\Delta < \pi, \Re(z)<0 \}$.

\item\label{as:par2}  The matrix $B_\theta \in M_d(\R)$ is diagonalizable with $S_\theta \in GL_d(\C)$ such that $S_\theta^{-1}B_\theta S_\theta$ is diagonal.

\item \label{as:par3}  The entries of the matrices $A_\theta$ and $B_\theta$ satisfy: for some $k \not= l \in \{1,\dots,d\}$,  $A_{(k,l),\theta} > 0$, $A_{(k,l),\theta} \not= A_{(l,k),\theta}$ and $B_{(k,l),\theta} = B_{(l,k),\theta}$.
\item\label{as:par4} The matrix $\var_\theta(\vech(V_0))$ is invertible.
\item\label{as:par5} 
$| \frac{\sigma_L - \sigma_W}{2\sigma_L}| \|A_\theta \otimes A_\theta \|_{S_\theta}< -2 \max\{ \Re(\sigma(B_\theta))\}$ with $S_\theta$ as in \ref{as:par2} and $\|A_\theta \otimes A_\theta \|_{S_\theta}$ as in \eqref{eq:defXYn}.

\item\label{as:par7} There exists \(\Xi_\theta \in \mathbb{S}_{d}^{++}\) such that, condition \eqref{eq:cond41} holds with $A,B$ replaced by $A_\theta,B_\theta$.


\item\label{as:par10} $m(4,\theta) < 0$ where 
\begin{equation}\label{eq:con44}
m(p,\theta)  := \int_{\R^d} ( (1 + \alpha_{\theta} \|\vect(y y^\ast)\|_{S_{\theta}})^p - 1) \nu_L(\diff y) + 2 p \max\{\mathfrak{R}(\sigma(B_{\theta}))\},
\end{equation}
$\alpha_{\theta} = \|S_{\theta}\|_2^2 \|S_{\theta}^{-1}\|_2^2 K_{2,B_{\theta}} \|A_{\theta} \otimes A_{\theta} \|_{S_{\theta}}$ with $K_{2,B_{\theta}}$ as in \eqref{eq:defK2Y}, $\|\vect(y y^\ast)\|_{S_{\theta}}$ as in \eqref{eq:defxYn} and $S_{\theta}$ as in \ref{as:par2}.

\end{enumerate}
\end{assumptionf}

\begin{assumptiong}[MUCOGARCH process at $\theta_0$]
\mbox{}
\begin{enumerate}[label=$(g.\arabic*)$]

\item \label{as:irre} The MUCOGARCH volatility process $Y$ is stationary, $\mu$-irreducible with the support of $\mu$ having non-empty interior and aperiodic.

\item\label{as:par8} $m(p,\theta_0) < 0$ for some $p > 4$.

%

\end{enumerate}

\end{assumptiong}





Assumption~\ref{as:par6}-\ref{as:par5} collect the needed identifiability assumptions from Section~\ref{se:identif}. Assumption~\ref{as:par7} is a sufficient condition under which we have uniqueness of the stationary distribution of $Y$ and geometric ergodicity (Section~\ref{se:mixing}). For the asymptotic results of the GMM estimator in Theorems~\ref{th:conGMM} and \ref{th:NormGMM}, we need to ensure $\E \|Y_0\|^p < \infty$ for appropriate $p > 1$, and this would require checking Assumptions \ref{eq:thcii} or \ref{eq:thciii} with $p > 1$. However, this imposes strong conditions on the L\'evy process \cite[Remark~4.4]{Stelzer17}. Instead, we require diagonalizability of the matrix $B_\theta$ (\ref{as:par2}), which is not a very restrictive assumption, and check \eqref{eq:con44} to ensure $\E \|Y_0\|^p < \infty$ \cite[Theorem~4.5]{Stelzer10}, which is less restrictive.


In view of the above assumptions and the results of Sections~\ref{se:gmm}-\ref{se:identif} we have the following consistency result.

\begin{corollary}[Consistency of the GMM estimator - $L$ has paths of infinite variation]\label{co:cons2} 
Suppose that assumptions \textbf{a}, \textbf{b}, \ref{as:comp},\ref{as:par6}-\ref{as:par10} and \ref{as:irre} hold. Then the GMM estimator defined in \eqref{eq:defGMM} is weakly consistent.
\end{corollary}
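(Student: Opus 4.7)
The plan is simply to verify each hypothesis of Theorem~\ref{th:conGMM}, namely Assumptions \textbf{a}, \textbf{b}, \textbf{c}, \ref{as:comp} and \ref{as:ident}--\ref{as:mixing}. Of these, \textbf{a}, \textbf{b} and \ref{as:comp} are directly included in the corollary. Moreover, since $\theta$ parametrises the entries of $(A_\theta,B_\theta,C_\theta)$ by construction, the map $\theta \mapsto (A_\theta,B_\theta,C_\theta)$ is essentially a linear coordinate projection and hence continuously differentiable, which gives \ref{as:diffABC}. It remains to establish Assumption~\textbf{c}, the identifiability \ref{as:ident} and the mixing \ref{as:mixing}, which I would address one at a time.

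For Assumption~\textbf{c}, I would aim at the stronger form \ref{as:Y0E4}. By \ref{as:par7}, condition (i) of Theorem~\ref{th:331Vest} with $p=1$ is satisfied at $\theta_0$; combined with the irreducibility and aperiodicity provided by \ref{as:irre}, this already yields a unique stationary distribution for $Y$, positive Harris recurrence and geometric ergodicity. To upgrade the stationary moment from $p=1$ to $p=4$ I would invoke \cite[Theorem~4.5]{Stelzer10}: diagonalisability of $B_{\theta_0}$ from \ref{as:par2} together with $m(4,\theta_0)<0$ from \ref{as:par10} yields $\E\|Y_0\|^4<\infty$, which is precisely \ref{as:Y0E4}. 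Avoiding items (ii)--(iii) of Theorem~\ref{th:331Vest} here is the point of including \ref{as:par2} and \ref{as:par10}, since those alternative items would impose quite restrictive conditions on $\nu_L$ (cf.\ the discussion after Assumption~\textbf{g}).

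For the mixing condition \ref{as:mixing}, geometric ergodicity of the time-homogeneous strong Markov process $Y$ (cf.\ \cite[Theorem~4.4]{Stelzer10}) yields exponential $\beta$-mixing of $Y$ in the usual way; Corollary~\ref{co:expBmix} then transfers this to strict stationarity and exponential $\alpha$-mixing of $(\bsg_i)_{i \in \N}$, as required.

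Finally, for identifiability \ref{as:ident}, injectivity of $\theta \mapsto (A_\theta,B_\theta,C_\theta)$ holds by construction, so it suffices to prove injectivity of $(A_\theta,B_\theta,C_\theta) \mapsto k_{\theta,r}$ for some $r\geq 2$; this is exactly Lemma~\ref{le:IdABFrCB} applied with $r=2$. Its hypotheses are the assumptions of Lemma~\ref{le:IdeExp} (spectral condition on $\calb_\theta$ from \ref{as:par1}; diagonalisability from \ref{as:par2} combined with \ref{as:par5} for invertibility of the matrix in \eqref{eq:bm2a}; invertibility of $\var_\theta(\vech(V_0))$ from \ref{as:par4}), together with \ref{as:par3} on the entries of $A_\theta,B_\theta$ and \ref{as:par6} on the spectrum of $B_\theta$, all of which are collected in Assumption~\textbf{f}. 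The main bookkeeping issue, and the one delicate spot in the proof, is matching the entry conditions of \ref{as:par3} (for a generic pair of indices $k\neq l$) with the positional condition in Lemma~\ref{le:auxLem} (formulated for the first row of $A$); this is resolved up to a relabelling of coordinates, after which Lemma~\ref{le:IdABFrCB} applies verbatim. Once \textbf{c}, \ref{as:ident}, \ref{as:diffABC} and \ref{as:mixing} are in place, an appeal to Theorem~\ref{th:conGMM} closes the argument.
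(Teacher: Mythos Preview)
Your proposal is correct and follows exactly the route the paper intends: the paper does not give a separate proof of this corollary but simply presents it as the combination of the preceding results, and you have spelled out precisely how Assumptions~\textbf{f} and \ref{as:irre} feed into Theorem~\ref{th:331Vest}, \cite[Theorem~4.5]{Stelzer10}, Corollary~\ref{co:expBmix} and Lemma~\ref{le:IdABFrCB} to deliver the hypotheses of Theorem~\ref{th:conGMM}. Your flagging of the index-relabelling needed to reconcile \ref{as:par3} with the first-row formulation in Lemma~\ref{le:auxLem} is the one genuinely delicate point, and you handle it the same way the paper does in the proof of Lemma~\ref{le:IdABFrCB}.
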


If the paths of the driving L\'evy process are of finite variation, we can relax even more the conditions from Corollary~\ref{co:cons2}. Before we state this result, we give the definition of asymptotic second-order stationarity which will be used in its proof. A stochastic process $X \in \S_d$ is said to be asymptotically second-order stationary with mean $\mu \in \R^{d^2}$, variance $\Sigma \in \S_{d^2}^+$ and autocovariance function $f: \R^+ \mapsto M_{d^2}(\R)$ if it has finite second moments and
\begin{equation*}
    \begin{split}
& \lim _{t \rightarrow \infty} \E\left(X_{t}\right)=\mu, \quad\quad \lim _{t \rightarrow \infty} \var\left(\vect\left(X_{t}\right)\right)=\Sigma \\
        & \lim _{t \rightarrow \infty} \sup _{h \in \mathbb{R}^{+}}\left\{\left\|\cov\left(\vect\left(X_{t+h}\right), \vect\left(X_{t}\right)\right)-f(h)\right\|\right\}=0.
    \end{split}
\end{equation*}

\begin{corollary}[Consistency of the GMM estimator - $L$ has paths of finite variation]\label{co:cons2fv}
Suppose that assumptions \textbf{a}, \textbf{b}, \ref{as:comp}, \ref{as:par6}, \ref{as:par1}, \ref{as:par3}-\ref{as:par7}, \ref{as:irre} hold and that $L$ has paths of finite variation. Then, the GMM estimator defined in \eqref{eq:defGMM} is weakly consistent.
\end{corollary}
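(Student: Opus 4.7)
The plan is to verify the assumptions of Theorem~\ref{th:conGMM} in the finite-variation setting, exploiting the relaxation of moment requirements for $L$ of finite variation that is noted in the Remark following Lemma~\ref{le:acovgg}. Assumptions \textbf{a}, \textbf{b} and \ref{as:comp} are given by hypothesis, and \ref{as:diffABC} is automatic because $\theta$ simply collects the entries of the triple $(A_\theta,B_\theta,C_\theta)$.

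First I would verify strict stationarity and mixing of the log-return sequence, i.e.\ \ref{as:mixing}. Assumption \ref{as:par7} is exactly condition (i) of Theorem~\ref{th:331Vest} at every $\theta$ with $p=1$; combined with the irreducibility/aperiodicity hypothesis \ref{as:irre}, it yields a unique stationary distribution of the MUCOGARCH volatility $Y$ with $\E\|Y_0\|<\infty$, together with geometric ergodicity and exponential $\beta$-mixing. Corollary~\ref{co:expBmix} transports this to $(\bsg_i)_{i\in\N}$, giving strict stationarity and exponential $\alpha$-mixing, which is precisely \ref{as:mixing}.

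Next I would handle identifiability \ref{as:ident} by applying Lemma~\ref{le:IdABFrCB} at every $\theta\in\Theta$. Its hypotheses are supplied one-to-one by the hypotheses of the corollary: \ref{as:par6} gives $\sigma(B_\theta)\subset\{\Re z<0\}$, \ref{as:par1} gives the spectral condition on $\calb_\theta$ that guarantees a unique matrix logarithm, \ref{as:par4} gives invertibility of $\var_\theta(\vech V_0)$, the combination of \ref{as:par2} and \ref{as:par5} yields invertibility of the matrix \eqref{eq:bm2a} via the lemma that follows \eqref{eq:baainv}, and \ref{as:par3} supplies both the structural condition on $A_\theta$ required by Lemma~\ref{le:auxLem} and the off-diagonal entry conditions $A_{(k,l),\theta}>0$, $A_{(k,l),\theta}\neq A_{(l,k),\theta}$, $B_{(k,l),\theta}=B_{(l,k),\theta}$ used in the identification of $A_\theta$ and $B_\theta$ from $\calb_\theta$.

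The main obstacle is Assumption \textbf{c}: we only have $\E\|Y_0\|<\infty$, whereas \ref{as:YtsecSt} and the proof of Lemma~\ref{le:ConsM} (which goes through Lemma~\ref{le:momG1} with $p=2$) would in general require $\E\|Y_0\|^2<\infty$. This is precisely where the finite-variation hypothesis on $L$ becomes essential. When $L$ has paths of finite variation, the dynamics \eqref{eq:dYt} and the product $\bsg_i\bsg_i^\ast$ can be analysed pathwise as absolutely convergent sums of jump contributions plus ordinary Lebesgue integrals, so the second-order expressions on which the GMM method rests can be computed via the compensation formula and controlled using only $\int\|x\|^{k}\nu_L(dx)$ for $k\le 4$, finite by \ref{as:El14fin}. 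This is the content of the Remark after Lemma~\ref{le:acovgg}: the formulas \eqref{eq:acvgg} and \eqref{eq:vg1g1} remain valid, and in particular $\E\|\vect(\bsg_1\bsg_1^\ast)\|$ and $\E\|\vect(\bsg_1\bsg_1^\ast)\vect(\bsg_{1+h}\bsg_{1+h}^\ast)^\ast\|$ are finite without appealing to \ref{as:Y0E4} or \ref{as:El18fin}. With these moments in hand, Birkhoff's ergodic theorem applied to the strictly stationary, $\alpha$-mixing sequence $(\bsg_i)$ gives $\hat k_{n,r}\stas k_{\theta_0,r}$ exactly as in Lemma~\ref{le:ConsM}, and the remaining consistency argument of Theorem~\ref{th:conGMM} (matching the sample moments to the model moments via the identifiable, continuously differentiable map $\theta\mapsto k_{\theta,r}$ on the compact set $\Theta$) goes through unchanged, completing the proof.
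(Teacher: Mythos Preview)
Your verification of \ref{as:mixing} via \ref{as:par7}, \ref{as:irre}, Theorem~\ref{th:331Vest}(i) and Corollary~\ref{co:expBmix} is correct, and so is your treatment of identifiability through Lemma~\ref{le:IdABFrCB}. The gap is in the moment step.

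You correctly identify that Theorem~\ref{th:331Vest}(i) only gives $\E\|Y_0\|<\infty$, while \ref{as:YtsecSt} (and the very formulas \eqref{eq:acvgg}--\eqref{eq:vg1g1}) need $\E\|Y_0\|^2<\infty$. Your resolution, however, misreads the Remark after Lemma~\ref{le:acovgg}: finite variation of $L$ lets one drop \ref{as:El18fin} and \ref{as:Y0E4} (the \emph{fourth} moment of $Y_0$), but \ref{as:YtsecSt} --- second-order stationarity, hence $\E\|Y_0\|^2<\infty$ --- is still required. Indeed, the right-hand sides of \eqref{eq:vg1g1} and \eqref{eq:E} contain $\E\vect(V_0)\vect(V_0)^\ast$ and $\var(\vect(V_0))$ explicitly; in the proof, the term $\E I_4$ and the quantity $\int_0^\Delta b_s\,\diff s$ are expressed through these second moments. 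So one cannot ``control using only $\int\|x\|^k\nu_L(\diff x)$'': a second moment of the stationary volatility is genuinely needed, and your argument does not supply it.

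The paper closes this gap by a different mechanism. It starts $Y$ from a deterministic matrix $D\in\S_d^+$ and invokes \cite[Theorem~4.20(ii)]{Stelzer10} (available under \ref{as:varL1_id}, \ref{as:Equadquad} and the spectral assumptions on $B_\theta,\calb_\theta,\calc_\theta$) to obtain \emph{asymptotic} second-order stationarity of this non-stationary copy; in particular $\sup_{t\ge 0}\E\|Y_t\|^2<\infty$ via the identity $\E\|Y_t\|_2^2=\operatorname{tr}(\var(Y_t))+\|\E Y_t\|_2^2$. Geometric ergodicity from Theorem~\ref{th:331Vest}(i) then gives $Y_t\Rightarrow Y_0$ (the stationary law), and the uniform second-moment bound plus \cite[Theorem~25.11]{billingsley} yields $\E\|Y_0\|^2<\infty$. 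With \ref{as:YtsecSt} now in hand, Lemma~\ref{le:momG1} ($p=2$), Remark~\ref{re:new}, Lemma~\ref{le:ConsM} and Theorem~\ref{th:conGMM} apply. This ``bootstrap from a deterministic start'' is the missing idea in your proposal.
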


\begin{proof}
Let $D \in \mathbb{S}_{d}^{+}$ be a constant matrix, and consider a MUCOGARCH process $(Y_t)_{t \in \R^+}$ solving \eqref{eq:dYt} have starting value $D$. Then, a combination of Assumptions \ref{as:varL1_id}, \ref{as:Equadquad} with the fact that the starting value $D$ is non-random and the hypothesis imposed on the matrices $B_\theta, \calb_\theta, \calc_\theta$ allow us to apply Theorem~4.20(ii) in \cite{Stelzer10} to conclude that the process $(Y_t)_{t \in \R^+}$ is asymptotically second-order stationary. Additionally, Theorem~\ref{th:331Vest}(i) ensures that the process $(Y_t)_{t \in \R^+}$ has a unique stationary distribution, is geometrically ergodic and its stationary distribution has finite first moment, i.e., $\E \|Y_0\| < \infty$. Since $Y_t \in \S_d^+$, and $\text{tr}(Y^* Y)$ (with $\text{tr}$ denoting the usual trace functional) defines a scalar product on $\S_d$ via $\text{tr}(Y_t^* Y_t) =  \vect(Y_t^*) \vect(Y_t)$ it follows that
\begin{equation}\label{eq:naosei}
\begin{split}
\E \|Y_t\|_2^2  = & \text{tr}(Y_t^* Y_t) = \vect(Y_t^*) \vect(Y_t) = \sum_{i,j}^d \E Y_{t,ij}^2 =  \sum_{i,j}^d \var(Y_{t,ij})  + \sum_{i,j}^d  (\E Y_{t,ij})^2\\
& \text{tr} (\var( Y_t)) +  \|\E (Y_t) \|_2^2, \quad t > 0.
\end{split}
\end{equation}
Since both maps $t \mapsto \E \| Y_t \|$ and $t \mapsto \var(Y_t) $ are continuous (\cite[eqs. (4.7) and (4.16)]{Stelzer10}), it follows from  \eqref{eq:naosei} that $\limsup_{t \geq 0} \E \| Y_t \|^2 < \infty$. Since Theorem~\ref{th:331Vest}(i) implies convergence of the transition probabilities in total variation, which in turns implies weak convergence (e.g. \cite[Exercise 13.2.2]{Klenke13Prob}), we have that $Y_t \std Y_0$ as $\tto$, with $Y_0$ being the stationary version of $Y$. Hence, we can use the continuous mapping theorem and \cite[Theorem~25.11]{billingsley} to conclude that $\E \|Y_0\|^2 < \infty$. Finally, the result follows by an application of Lemma~\ref{le:momG1}, Theorem~\ref{th:conGMM},  Corollary~\ref{co:expBmix} and Remark~\ref{re:new}.
\end{proof}

Recall that for the asymptotic normality result, we need to ensure that the stationarity distribution of the MUCOGARCH volatility process has more than $4$ moments (cf. \ref{as:momANC}). This is summarized in the next corollary.

\begin{corollary}[Asymptotic normality of the GMM estimator]\label{co:anorm2}
If Assumptions~\textbf{a}, \textbf{b}, \ref{as:comp}, \ref{as:theInt}, \ref{as:diffABC}, \textbf{f} and \textbf{g} hold, then the GMM estimator defined in \eqref{eq:defGMM} is asymptotically normal with covariance matrix as in \eqref{eq:acmGMM}.
\end{corollary}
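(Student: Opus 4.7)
The plan is to derive Corollary~\ref{co:anorm2} by verifying every hypothesis of Theorem~\ref{th:NormGMM}. Assumptions \textbf{a}, \textbf{b}, \ref{as:comp}, \ref{as:theInt}, and \ref{as:diffABC} are assumed directly in the corollary, so the remaining items to establish are Assumption~\textbf{c}, identifiability \ref{as:ident}, strict stationarity with exponential $\alpha$-mixing \ref{as:mixing}, the moment bound \ref{as:momANC}, and positive definiteness of $\Sigma_{\theta_0}$ (which, as in Theorem~\ref{th:NormGMM} itself, must be taken as an additional standing requirement).

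First I would use \ref{as:par7}, which is exactly the $p=1$ case of Theorem~\ref{th:331Vest}(i), together with the irreducibility and aperiodicity in \ref{as:irre}, to obtain a unique stationary distribution for the volatility $Y$ and geometric ergodicity, hence exponential $\beta$-mixing. Corollary~\ref{co:expBmix} transfers this to strict stationarity and exponential $\alpha$-mixing of $(\bsg_i)_{i \in \N}$, establishing \ref{as:mixing}. For moments I would then apply \cite[Theorem~4.5]{Stelzer10} to \ref{as:par8}, which yields $\E \|Y_0\|^p < \infty$ for some $p > 4$; in particular $\E \|Y_0\|^4 < \infty$, so Assumption~\textbf{c} holds. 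The same inequality $m(p,\theta_0) < 0$ forces $\int_{\|y\| > 1} \|y\|^{2p} \nu_L(\diff y) < \infty$ (the integrand in \eqref{eq:con44} grows like $\|y\|^{2p}$), hence $\E \|L_1\|^{2p} < \infty$; Lemma~\ref{le:momG1} then delivers $\E \|\bsg_1\|^{2p} < \infty$ with $2p > 8$, i.e.\ \ref{as:momANC} with $\delta = 2p - 8 > 0$.

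For identifiability \ref{as:ident} I would invoke Lemma~\ref{le:IdABFrCB} with $r = 2$: its hypotheses are supplied by \ref{as:par6} and \ref{as:par1} (spectral conditions on $B_\theta$ and $\calb_\theta$), \ref{as:par2} (diagonalizability of $B_\theta$), \ref{as:par3} (entry conditions on $A_\theta, B_\theta$), \ref{as:par4} (invertibility of $\var_\theta(\vech(V_0))$), and the operator-norm bound \ref{as:par5}, which through the unlabelled lemma immediately preceding Lemma~\ref{le:auxLem} yields invertibility of the matrix in \eqref{eq:bm2a}. With every condition of Theorem~\ref{th:NormGMM} in place, the asymptotic normality with covariance matrix \eqref{eq:acmGMM} follows.

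The principal obstacle I anticipate is the extraction of a strictly positive $\delta$ in \ref{as:momANC}: one has to pull simultaneously from the single abstract inequality $m(p,\theta_0) < 0$ both a stationary $p$-th moment of $Y$ and a $2p$-th moment of $L_1$, and then chain them through Lemma~\ref{le:momG1} in such a way that the threshold $8$ is strictly exceeded. Apart from this tail analysis, the argument is essentially bookkeeping, each item of \textbf{f} and \textbf{g} being the direct source of exactly one hypothesis of Theorem~\ref{th:NormGMM}.
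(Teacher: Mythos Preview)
Your proposal is correct and follows the same strategy as the paper: verify the hypotheses of Theorem~\ref{th:NormGMM}. The paper's own proof is terser---it only spells out the moment step, invoking \cite[Theorem~4.5]{Stelzer10} together with \cite[Proposition~4.1]{LM05OU} under \ref{as:par8} to obtain $\E\|Y_0\|^p<\infty$ for some $p>4$, and then declares that ``the rest of the proof is just an application of Theorem~\ref{th:NormGMM}''; your explicit verification of mixing (via Theorem~\ref{th:331Vest}(i) and Corollary~\ref{co:expBmix}), identifiability (via Lemma~\ref{le:IdABFrCB}), and the L\'evy moment $\E\|L_1\|^{2p}<\infty$ (from the $\|y\|^{2p}$ growth of the integrand in \eqref{eq:con44}) fills in steps the paper leaves implicit. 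Your observation that positive definiteness of $\Sigma_{\theta_0}$ is required by Theorem~\ref{th:NormGMM} but is not listed among the corollary's hypotheses is accurate; the paper's proof does not address this point either.
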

\begin{proof}
By the same arguments of \cite[Theorem~4.5]{Stelzer10} combined with \cite[Proposition~4.1]{LM05OU} and \ref{as:par8}, it follows that $\E \|Y_0\|^p < \infty$ for some $p > 4$. The rest of the proof is just an application of Theorem~\ref{th:NormGMM}. 
\end{proof}

\begin{remark}
The advantage of Corollaries~\ref{co:cons2}-\ref{co:anorm2} is that Assumption~\textbf{f} can be checked numerically and Assumption~\textbf{g} holds true if e.g., the L\'evy process $L$ is a compound Poisson process with jump distribution having a density which is strictly positive around zero (see Theorem~\ref{th:Th51JS}).

If \ref{as:irre} holds, the stationary distribution of $Y$ is automatically a maximum irreducibility measure. All maximal irreducibility measures are equivalent and thus the support of the stationary distribution has a support which has non-empty interior. The latter in turn implies that the variance has to be an invertible operator (non-invertibility is equivalent to the distribution being  concentrated on a proper linear subspace) which is \ref{as:par4} for $\theta_0$.
\end{remark}

In the next section, we investigate the finite sample performance of the estimators in a simulation study.

\section{Simulation study}\label{s:sim}

To assess the performance of the GMM estimator, we will focus on the MUCOGARCH model in dimension $d = 2$. We fix $L_t = L^\disc_t + \sqrt{\sigma_W}W_t$ for $t \in \R^+$ where $L^\disc$ is a bivariate compound Poisson process (CPP), $W$ is a standard bivariate Brownian motion, independent of $L^\disc$ and $\sigma_W \geq 0$ is fixed. We choose $L^\disc$ as a CPP, since it allows to simulate the MUCOGARCH volatility process $V$ exactly. Thus, we only need to approximate the Brownian part of the (log) price process $G$ in \eqref{eq:def:Pt}, which is done by an Euler scheme. Setting $L^\disc$ as a CPP is not a very crucial restriction, since for an infinite activity L\'evy process one would need to approximate it using only finitely many jumps. For example by using a CPP for the big jumps component of $L^\disc$ and an appropriate Brownian motion for its small jumps component (see \cite{cohen2007gaussian}). In applications, a CPP has also been used in combination with the univariate COGARCH(1,1) process for modeling high frequency data (see \cite{Muller10}). The jump distribution of $L^\disc$ is chosen as $N(0, 1/4 I_2)$ and the jump rate is $4$, so that $\var(L_1) = 2 I_2$ and 
$$
\E [ \vect(\qvl^\disc), \vect(\qvl^\disc)^{\ast} ]_1^\disc =  1/4 ( I_{4} + K_{2} + \vect(I_2) \vect(I_2)^{\ast} ).
$$
In this case, the chosen L\'evy process $L$ satisfies Assumptions \textbf{a} from Section~\ref{se:mFor} (with $\sigma_L = 1$ and $\sigma_W > 0$). Based on the identification Lemma~\ref{le:IdABFrCB}, we assume that the model is parameterized with $\theta= (\theta^{(1)},\dots,\theta^{(11)})$, and the matrices $A_\theta, B_\theta$ and $C_\theta$ are defined as:
\begin{equation*}
A_\theta = \begin{pmatrix}
\theta^{(1)} & \theta^{(2)} \\
\theta^{(3)} & \theta^{(4)} \\
\end{pmatrix}, \quad 
B_\theta = \begin{pmatrix}
\theta^{(5)} & \theta^{(6)} \\
\theta^{(6)} & \theta^{(7)} \\
\end{pmatrix} \quad \text{and}  \quad
C_\theta = \begin{pmatrix}
\theta^{(8)} & \theta^{(9)} \\
\theta^{(9)} & \theta^{(10)} \\
\end{pmatrix},
\end{equation*}
with  $ \theta^{(2)} > 0$ and $ \theta^{(2)} \not=  \theta^{(3)}$. Thus, Assumption~\ref{as:diffABC} and \ref{as:par3} are automatically satisfied.
The data used for estimation is a sample of the log-price process $\bsg = (\bsg_i)_{i=1}^n$ as defined in \eqref{eq:def:Gn} with true parameter value $\theta_0 \in \Theta \subset \R^{11}$ observed on a fixed grid of size $\Delta = 0.1$ (the grid size for the Euler approximation of the Gaussian part is $0.01$).

We experiment with two different settings, namely:

\begin{example}\label{ex:1} We fix $\sigma_W = 1$,
\begin{equation}\label{eq:theta0}
A_{\theta_0} = \begin{pmatrix}
0.85 & 0.10 \\
-0.10 & 0.75 \\
\end{pmatrix}, \quad 
B_{\theta_0} = \begin{pmatrix}
-2.43 & 0.05 \\
0.05 & -2.42 \\
\end{pmatrix} \quad \text{and}  \quad
C_{\theta_0} = \begin{pmatrix}
1 & 0.5 \\
0.5 & 1.5 \\
\end{pmatrix}.
\end{equation}
\end{example}
\begin{example}\label{ex:2} We fix $\sigma_W = 0,$
$A_{\theta_0}$ and $C_{\theta_0}$ are as in Example~\ref{ex:1} and 
\begin{equation}\label{eq:theta0}
B_{\theta_0} = \frac{1}{4}\begin{pmatrix}
-2.43 & 0.05 \\
0.05 & -2.42 \\
\end{pmatrix}. 
\end{equation}
\end{example}


For the chosen L\'evy process here, Assumption~\ref{as:irre} is satisfied. In Example~\ref{ex:1}, $\theta_0$ is chosen in such a way that the asymptotic normality of $\hat{\theta}_n$ can be verified. 
Then, in Example~\ref{ex:2} we rescale $B_{\theta_0}$ from Example~\ref{ex:1} in such a way that our sufficient conditions for weak consistency are satisfied, but our sufficient conditions for asymptotic normality in Corollary~\ref{co:anorm2} are not satisfied.


Due to the identifiability Lemma~\ref{le:IdABFrCB} we need to choose $r \geq 2$. For comparison purposes, we perform the estimation for maximum lags $r \in \{2,5,10\}$ and sample sizes $n \in   \{1\,000,  10\,000, 100\,000\}$. The computations are performed with the \texttt{optim} routine in combination with the \texttt{Nelder-Mead} algorithm in R (\cite{Rsoftware}). Initial values for the estimation were found by the \texttt{DEoptim} routine on a neighborhood around the true parameter $\theta_0$.  We only consider estimators based on the identity matrix for the weight matrix $\Omega$ in \eqref{eq:defGMM}. The results are based on $500$ independent samples of MUCOGARCH returns. 


In the following we report the finite sample results of the GMM for Examples~\ref{ex:1} and \ref{ex:2}.

\subsection{Simulation results for Example~\ref{ex:1}}

We can check numerically that the matrices $A_{\theta_0}, B_{\theta_0}$ and $A_{\theta_0}$ are such that Assumptions~\textbf{b} and \ref{as:par1}-\ref{as:par5} hold. Additionally, the eigenvalues of the matrix
$B_{\theta_0} + B_{\theta_0}^* + \sigma_L A_{\theta_0}^* A_{\theta_0}$ are  $-4.067$ and $-4.328$, so it is negative definite and Assumptions~\ref{as:par7} holds. For our choice of $\theta_0$ we have that $B_{\theta_0}$ is diagonalizable with $B_{\theta_0} = S_{\theta_0}D_{\theta_0}S_{\theta_0}^{-1}$, where
\begin{equation*}
S_{\theta_0} = \begin{pmatrix}
-0.671 & -0.741 \\
-0.741 & 0.671 \\
\end{pmatrix} \quad \text{and}  \quad
D_{\theta_0} = \begin{pmatrix}
-2.375 & 0 \\
0 & -2.475 \\
\end{pmatrix}.
\end{equation*}
In addition, for $p = 4.001$,
\begin{equation}\label{eq:conprac}
\int_{\R^2} ( (1 + \alpha_{\theta_0} \|\vect(y y^\ast)\|_{S_{\theta_0}})^p - 1) \nu_L(\diff y) + 2 p \max\{\mathfrak{R}(\sigma(B_{\theta_0}))\} = -0.024 < 0,
\end{equation}
so \ref{as:par8} is also valid. Therefore, all assumptions for applying Corollary~\ref{co:anorm2} can be verified, which imply assumption~\textbf{e}, and ensure asymptotic normality. We also note that the chosen parameters are very close to not satisfying Assumption~\eqref{eq:conprac}. 


We investigate the behavior of the bias and standard deviation in Figure~\ref{fig:bias_and_std}, where we excluded those paths for which the algorithm did not converge successfully (around $10$ percent of the paths of length $n = 1\,000$ and less than $3$ percent for larger $n$). Figure~\ref{fig:bias_and_std} show the estimated absolute values of the bias and standard deviation for different lags $r$ and varying $n$. As expected, they decay when $n$ increases. Additionally, the results favor the choice of maximum lag $r = 10$, which is already expected since using more lags of the autocovariance function usually helps to give a better fit. It is also worth noting that the estimation of the parameters in the matrix $B_{\theta_0}$ is more difficult than the other parameters, specially for $n \in \{1\,000,10\,000\}$.

Figures~\ref{fig:qqplots1} and \ref{fig:qqplots2} assess asymptotic normality though normal QQ-plots. Based on the previous findings we fix $r = 10$, since it gave the best results. This might have to do with the fact that using just a few lags for the autocovariance function ($r=2$ or $r=5$) are not sufficient for a good fit. Here we do not exclude those paths for which the algorithm did not converge (these are denoted by large red points in the normal QQ-plots in Figures~\ref{fig:qqplots1} and \ref{fig:qqplots2}). These plots are clearly in line with the asymptotic normality of the estimators. It is worth noting that the tails corresponding to the estimates of $B_{\theta_0}$ deviate from the ones of a normal distribution for values of $n \in \{1\,000,10\,000\}$, but they get closer to a normal distribution for $n = 100\,000$. The tails of the plots for $A_{(2,1),\hat{\theta}_n}$ in Figure~\ref{fig:qqplots1} is not close to a normal (although the plots show its convergence). This is maybe due to identifiability condition in Lemma~\ref{le:IdABFrCB} which requires $A_{(2,1),\theta} > 0$ but $A_{(2,1),\theta_0} = 0.1$ is very close to the boundary. For $n = 1\,000$, there are very large negative outliers for the estimates of $B_{\theta_0}$, which affects the bias substantially.

 \begin{minipage}{\linewidth}
 \makebox[\linewidth]{
   \includegraphics[width = 17cm, height=15cm]{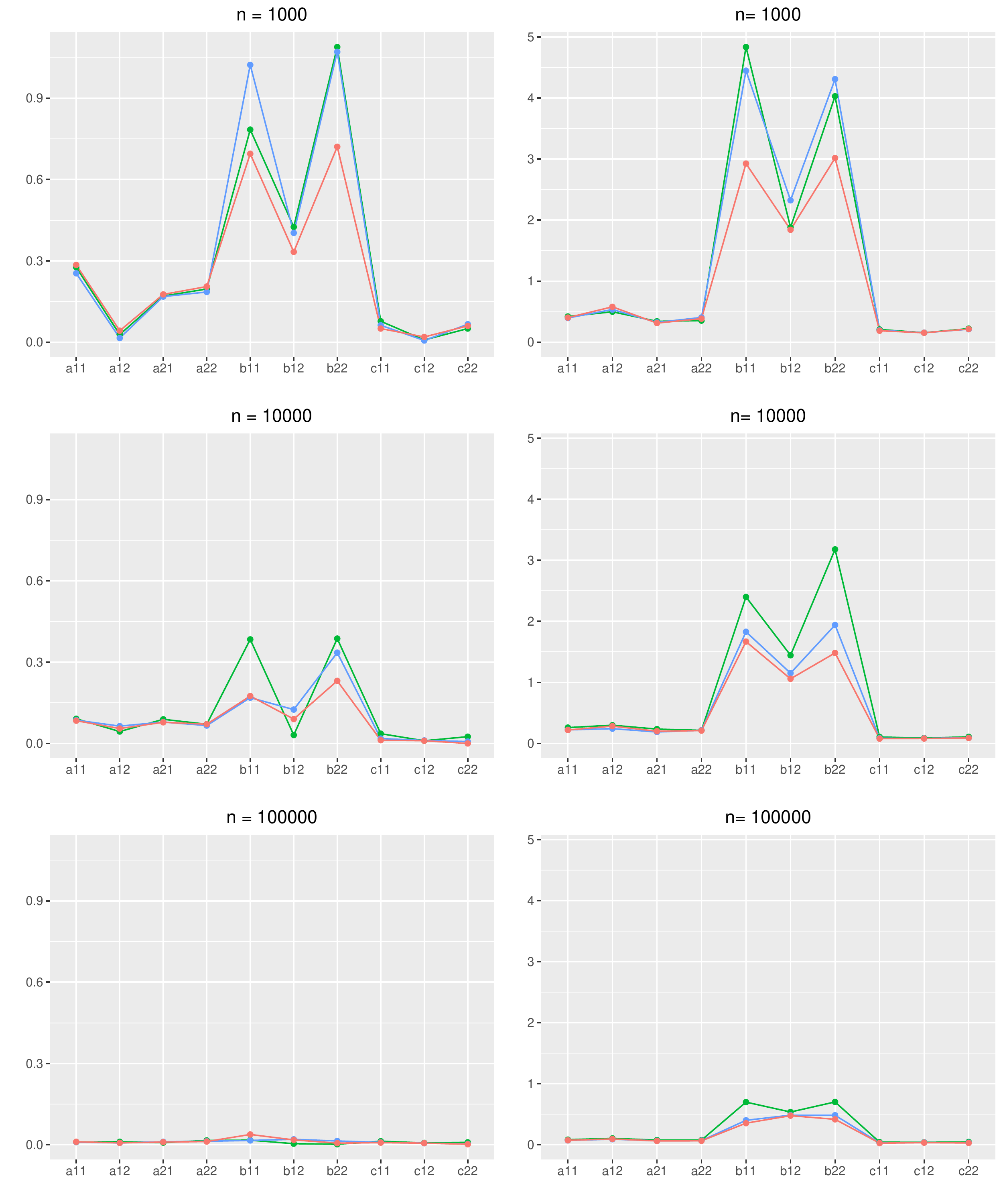}}
 \captionof{figure}{Example 1: Estimated absolute bias (lhs) and standard deviation (rhs) of $\hat{\theta}_{n,r}$. The colors green, blue and red correspond to $r=2,5$ and $10$, respectively.}\label{fig:bias_and_std}
 \end{minipage}


 \begin{minipage}{\linewidth}
 \makebox[\linewidth]{
   \includegraphics[width = 17cm, height=20cm]{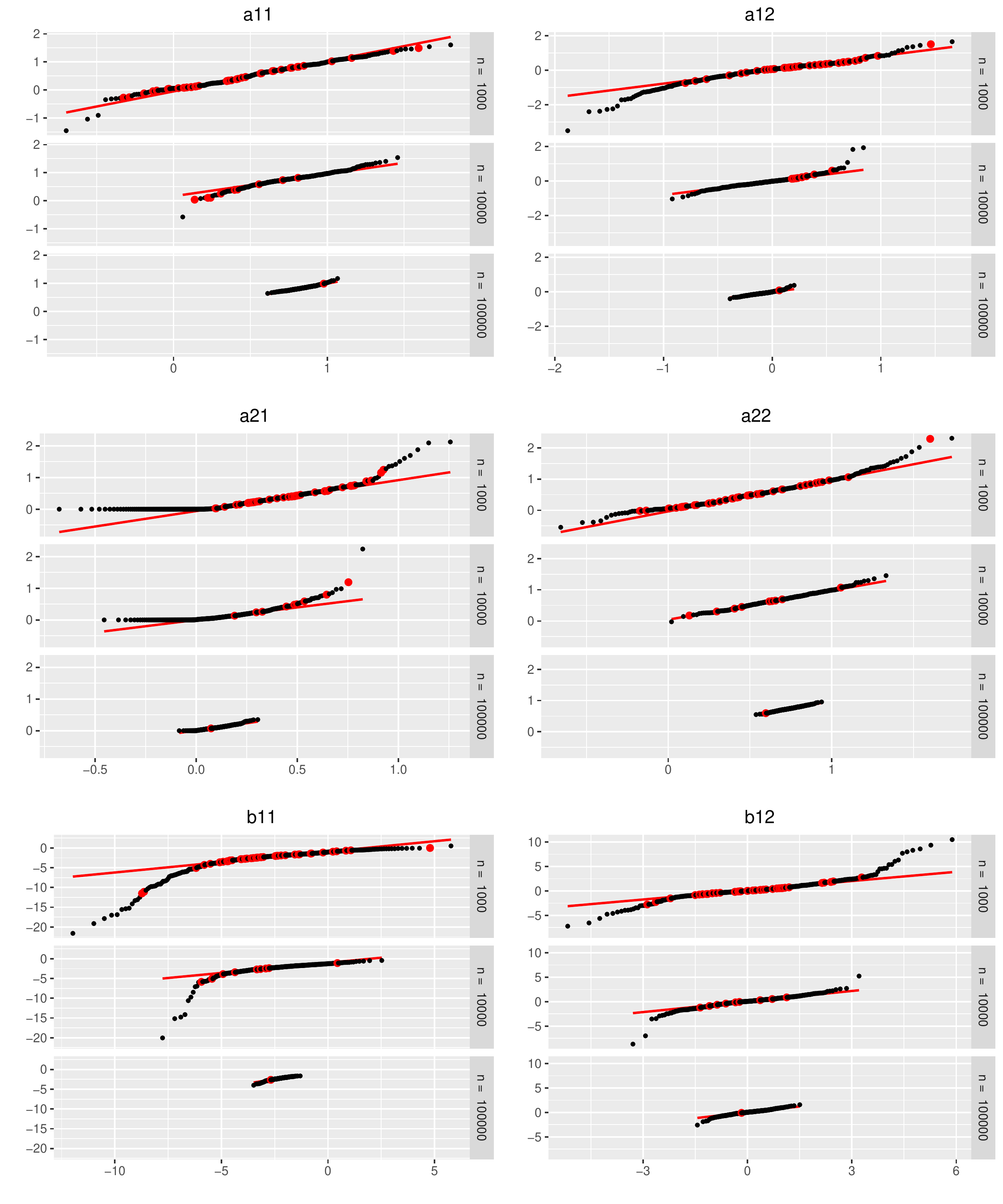}}
 \captionof{figure}{Example 1: Normal QQ-plots of $\hat{\theta}_{n,10}$ for $\theta_0$ as in \eqref{eq:theta0}. The red dots are values for which the algorithm did not converge.}\label{fig:qqplots1}
 \end{minipage}

 \begin{minipage}{\linewidth}
 \makebox[\linewidth]{
   \includegraphics[width = 17cm, height=20cm]{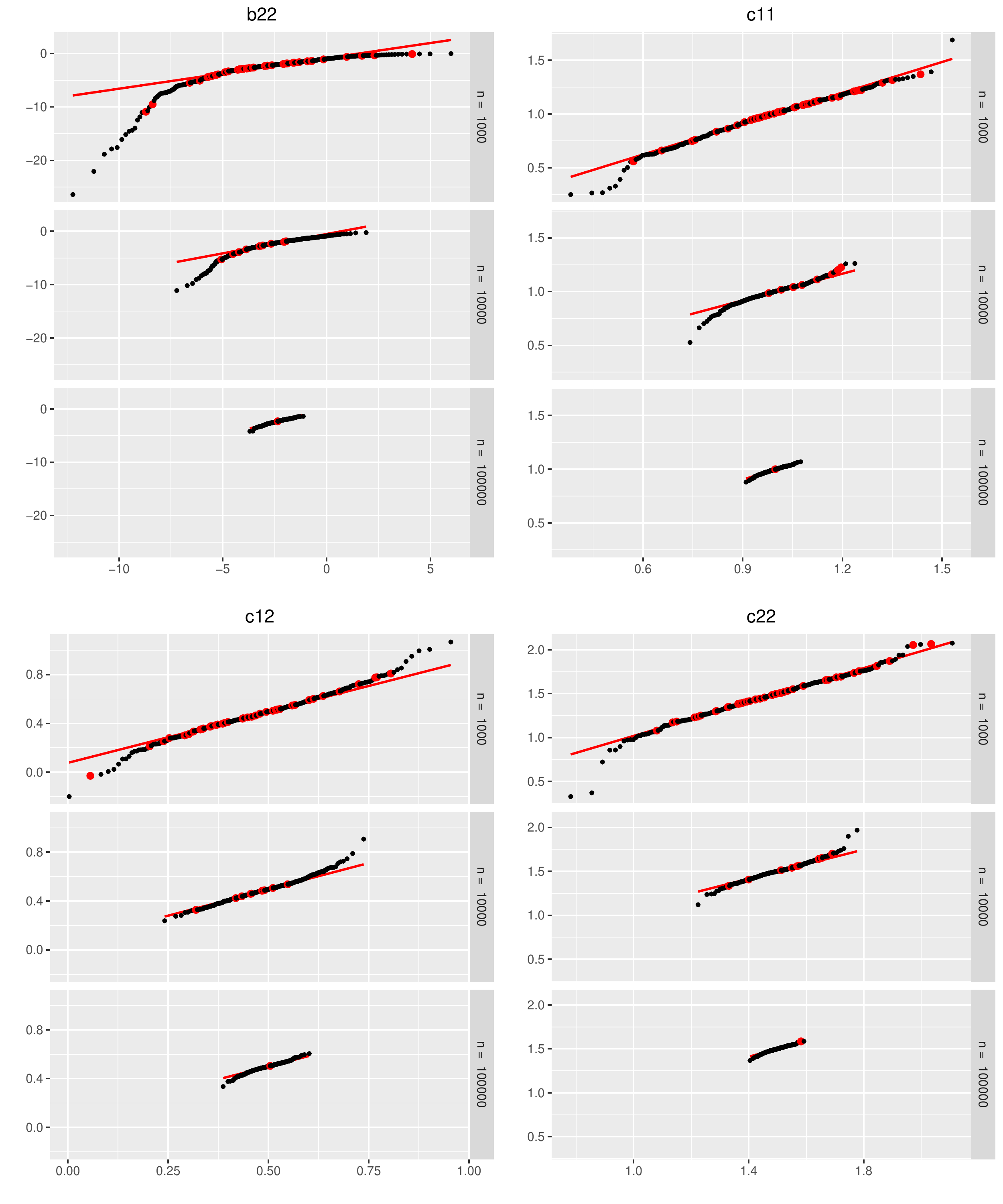}}
 \captionof{figure}{Example 1: Normal QQ-plots of $\hat{\theta}_{n,10}$ for $\theta_0$ as in \eqref{eq:theta0}. The red dots are values for which the algorithm did not converge.}\label{fig:qqplots2}
 \end{minipage}

 
  \begin{minipage}{\linewidth}
 \makebox[\linewidth]{
   \includegraphics[width = 17cm, height=15cm]{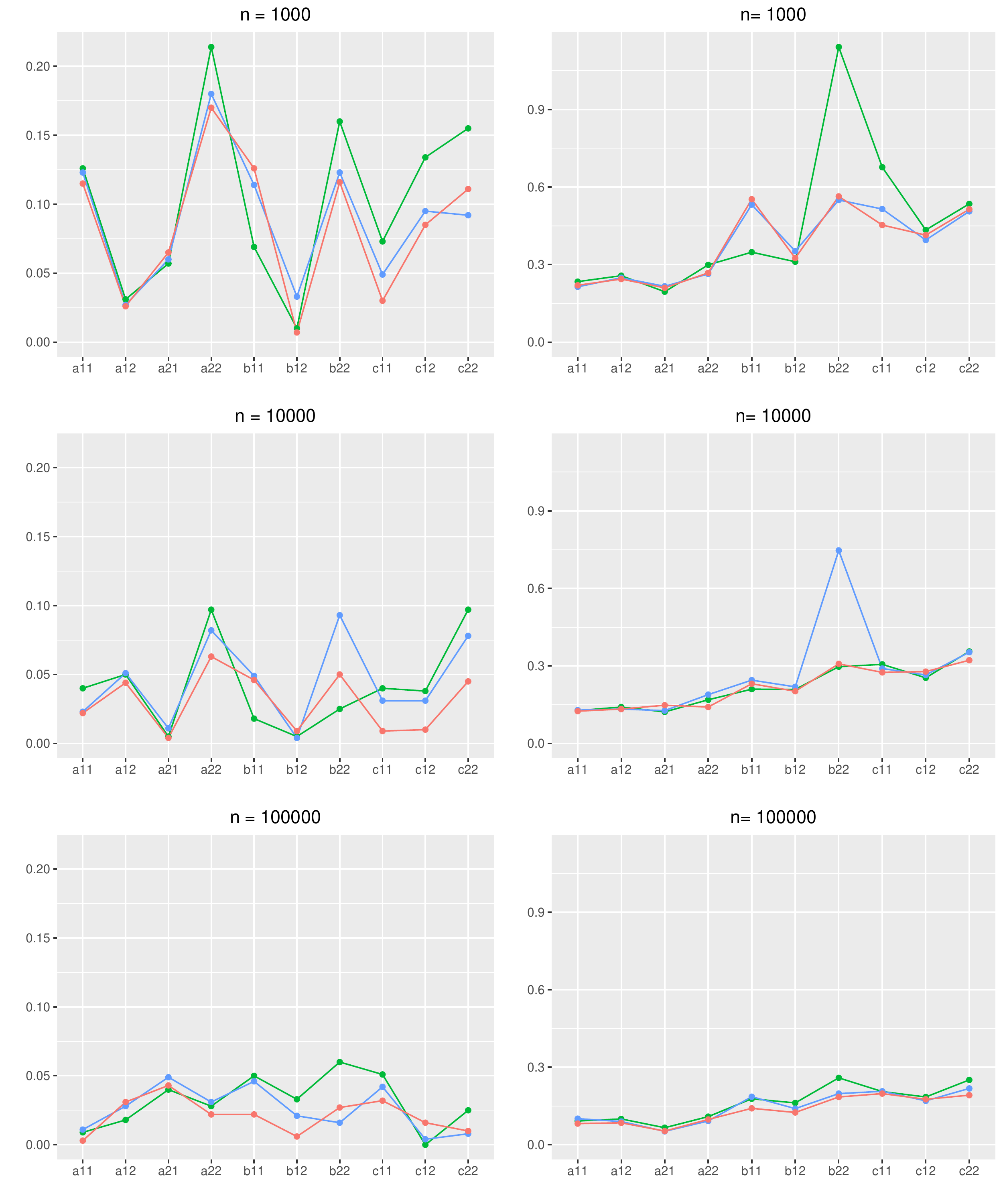}}
 \captionof{figure}{Example 2: Estimated bias and std of $\hat{\theta}_{n,r}$. The colors green, blue and red correspond to $r=2,5$ and $10$, respectively.}\label{fig:bias_2}
 \end{minipage}
 


 \begin{minipage}{\linewidth}
 \makebox[\linewidth]{
   \includegraphics[width = 17cm, height=20cm]{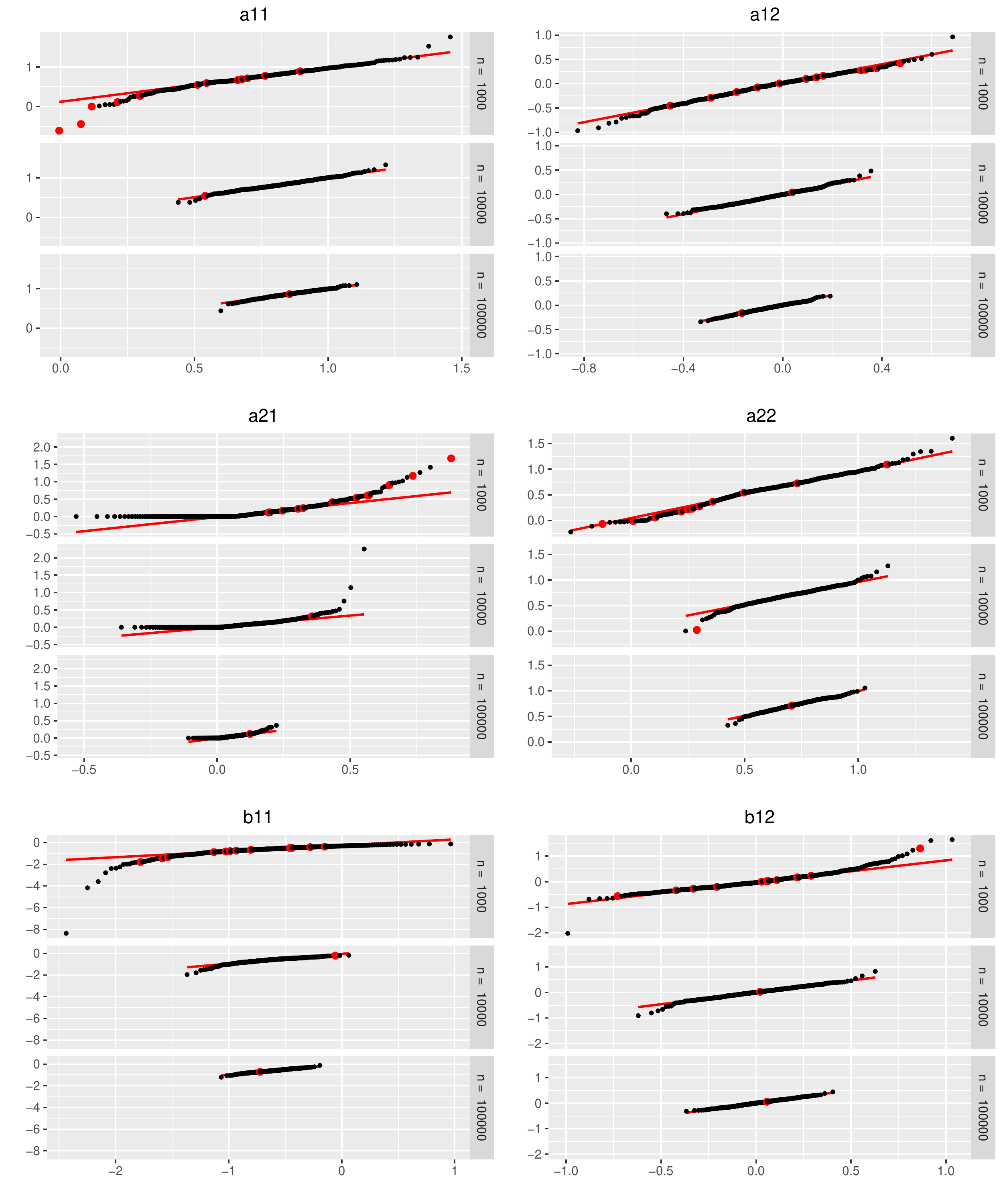}}
 \captionof{figure}{Example 2: Normal QQ-plots of $\hat{\theta}_{n,10}$ for $\theta_0$ as in \eqref{eq:theta0}. The red dots are values for which the algorithm did not converge.}
 \end{minipage}

 \begin{minipage}{\linewidth}
 \makebox[\linewidth]{
   \includegraphics[width = 17cm, height=20cm]{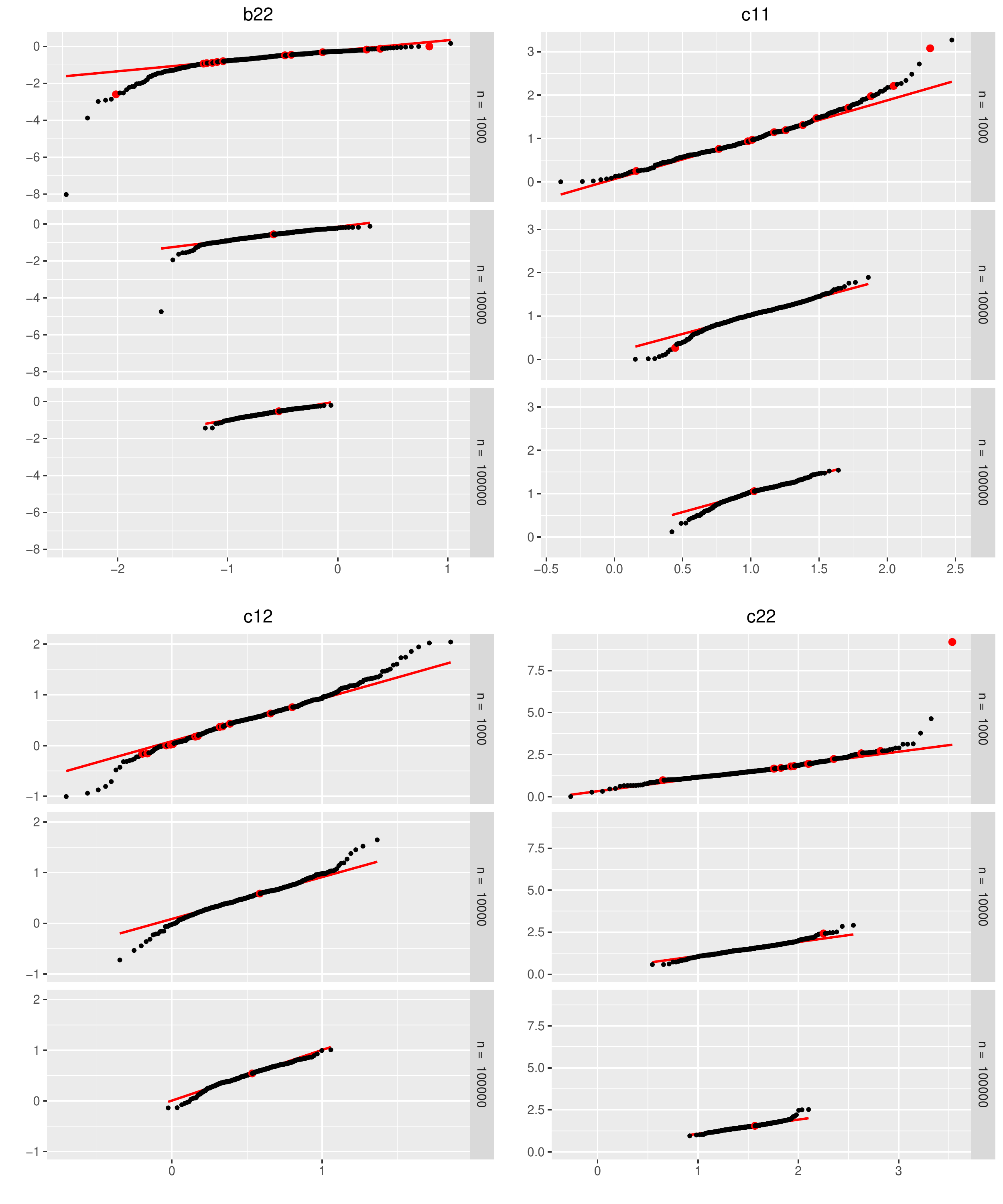}}
 \captionof{figure}{Example 2: Normal QQ-plots of $\hat{\theta}_{n,10}$ for $\theta_0$ as in \eqref{eq:theta0}. The red dots are values for which the algorithm did not converged.}\label{fig:qqplots2_2}
 \end{minipage}

\subsection{Simulation results for Example~\ref{ex:2}}
In this section we analyze the behavior of the GMM estimator when the consistency conditions are valid, but we cannot check the conditions for asymptotic normality. Here, we have
$\sigma(B_{\theta_0}/4 + B_{\theta_0}^*/4 +  \sigma_L A_{\theta_0}^* A_{\theta_0})  = \{-0.594, -0.619\} \in (-\infty,0) + i\R$. Thus, Corollary~\ref{co:cons2fv} applies and gives weak consistency of the GMM estimator. On the other hand, for $p = 4.001$ the integral in \eqref{eq:conprac} is $14.22 > 0$, and thus, we cannot apply Corollary~\ref{co:anorm2} to ensure asymptotic normality.

The results for Example~\ref{ex:2} are given in Figures~\ref{fig:bias_2}-\ref{fig:qqplots2_2}. The estimation of the entries of $B_{\theta_0}$ does not seem to be substantially more difficult than the entries of $A_{\theta_0}$ and $C_{\theta_0}$, as observed in the previous example. Also, the estimated bias and std decreases in general as $n$ grows, showing consistency of the estimators. Also,  the convergence rate seems slow and, therefore, probably smaller than $n^{1/2}$ (the asymptotic normality rate from Theorem~\ref{th:NormGMM}). The QQ-plots for the estimation of the parameters $A_{(2,1)}$, $C_{(1,1)}$ and $C_{(2,1)}$ also show some deviation from the normal distribution.




\section{Real data analysis}\label{se:realdata}

In this Section, we fit the MUCOGARCH model to 5 minutes log-returns of stock prices corresponding to the SAP SE and Siemens AG companies (the data was obtained from the Refinitiv EIKON system). For both datasets we have excluded overnight returns. The resulting bivariate dataset has a total length of 12 135 (from 30-jun-2020 to 15-dez-2020) and is shown in Figure \ref{fig:data}.

 \begin{minipage}{\linewidth}
 \makebox[\linewidth]{
   \includegraphics[width = 15.5cm, height=11cm]{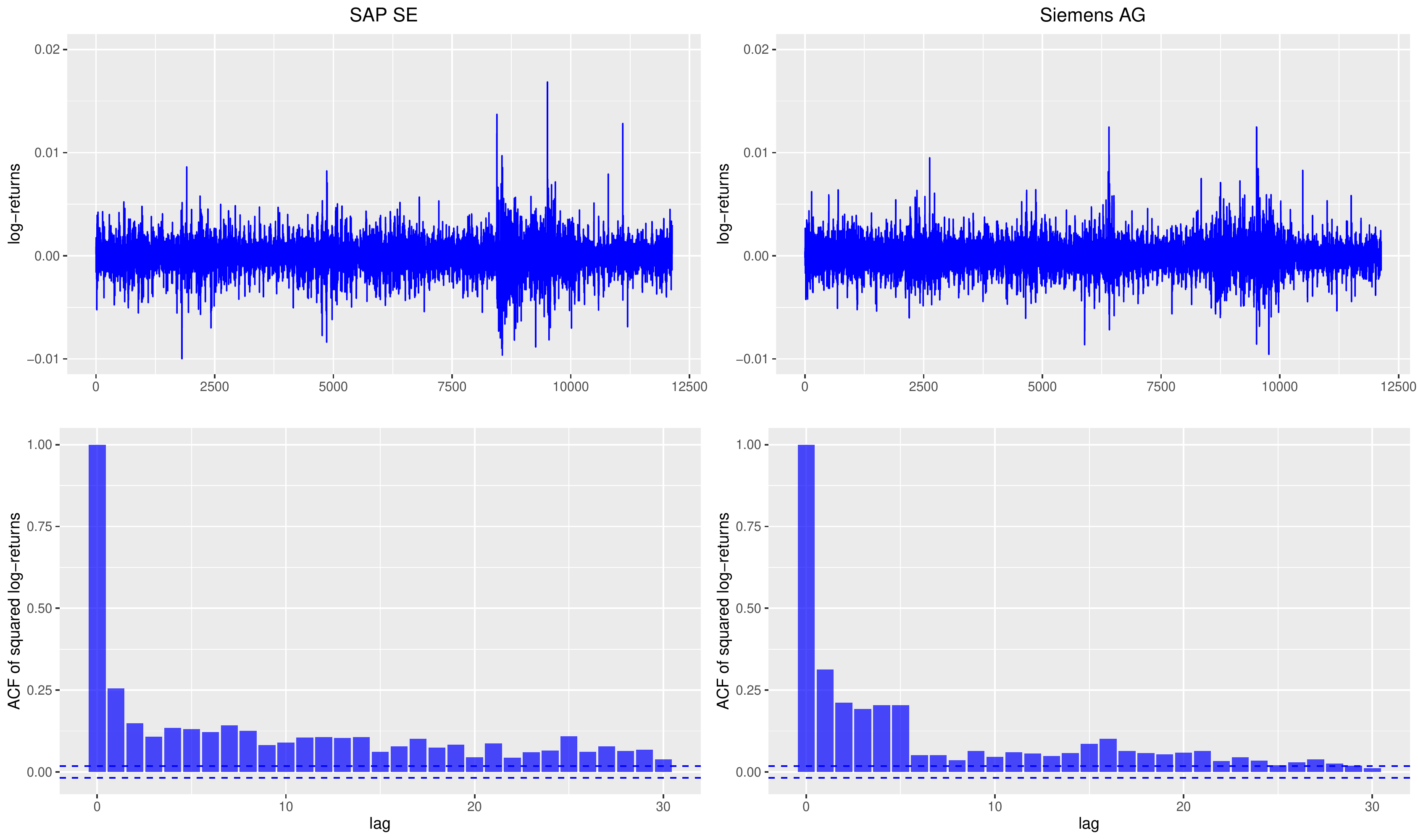}}
 \captionof{figure}{Log-returns (top) and  sample  acf  of  squared log-returns  (bottom)  for SAP SE (left) and Siemens AG (right) exchange rates.}\label{fig:data}
 \end{minipage}
 
  Based on the sample autocorrelation function of the squared log-returns we decided to use $15$ lags to estimate a bivariate MUCOGARCH model. Before moving to the estimation step, we re-scaled the data multiplying it by $1 000$ for numerical reasons. Since the GMM algorithm from \eqref{eq:defGMM} requires a starting value, we first fit a one-dimensional COGARCH to each dataset with the algorithm described in \cite[Algorithm 1]{Haug07} and used them to construct the corresponding 2-dimensional MUCOGARCH describing two independent log-return price process. This will be a MUCOGARCH process whose parameters $A,B,C$ are diagonal matrices and the driving L\'{e}vy process has independent components \cite[Example 4.2]{Stelzer10}. To construct a second step GMM estimator, we use the estimated parameters from the first step to estimate a weight matrix to replace $\Omega$ in \eqref{eq:defGMM}.
  Indeed, as explained in \cite[Section 1.3.3]{matyas99GMM}, and given a consistent estimator $\hat{\Sigma}_{\theta_0}$ of $\Sigma_{\theta_0}$, an asymptotically more efficient estimator can be constructed as 
  \begin{equation}\label{eq:defGMM2}
\hat{\theta}^{(2)}_n = \argminA_{\theta \in \Theta} \Big\{ (\hat{k}_{n,r} - k_{\theta,r})^T \hat{\Sigma}_{\theta_0} (\hat{k}_{n,r} - k_{\theta,r}) \Big\}.
\end{equation}

Given a bivariate dataset of log-price returns of size $n$ following a MUCOGARCH model, we can see from \eqref{eq:defSigma} that a natural estimator for $\Sigma_{\theta_0}$ is
\begin{equation}\label{eq:sighat}
\begin{split}
\hat{\Sigma}_{n,M} & = \frac{1}{n} \sum_{t=1}^n (D_t -\hat{k}_{n,r})(D_t -\hat{k}_{n,r})^* + \\
& \frac{1}{n} \sum_{t=1}^n \sum_{i=1}^M \big\{ (D_t -\hat{k}_{n,r})(D_{t+i} -\hat{k}_{n,r})^* + (D_{t+i} -\hat{k}_{n,r})^*(D_t -\hat{k}_{n,r}) \big\}, \quad M \in \mathbb{N},
 \end{split}
\end{equation}  
where we truncated the infinity sum in $\eqref{eq:defSigma}$. The above estimator is a symmetric matrix, regardless of the values chosen for $M$ and $n$. As done in the proof of Lemma~\ref{le:ConsM}, sufficient conditions for it to be consistent are assumptions \textbf{a}, \textbf{b}, \textbf{c} and \ref{as:mixing}. On the other hand, we cannot guarantee that it is going to be positive semidefinite, which is a condition required to use it as a weight matrix in the estimator from \eqref{eq:defGMM2}. One way around it is to use the simpler estimator (as in \cite[Section 4.1]{stelzer2015moment}) of $\Sigma_{\theta_0}$, which ignores the second summation on the rhs of \eqref{eq:sighat}, namely, 
\begin{equation}\label{eq:sighatBasicCOV}
\hat{\Sigma}_{n,M}^{(\text{basicCOV})} = \frac{1}{n} \sum_{t=1}^n (D_t -\hat{k}_{n,r})(D_t -\hat{k}_{n,r})^*, \quad M \in \mathbb{N},
\end{equation} 
The resulting estimators when applying the 2 step GMM with \eqref{eq:sighatBasicCOV} for the bivariate dataset of 5min log-returns are given by:
\begin{equation}\label{eq:2gmmrealdata}
A = \begin{pmatrix}
0.442 & 0.259 \\
0.054 & 0.054 \\
\end{pmatrix}, \quad 
B = \begin{pmatrix}
-0.146 & -0.014 \\
-0.014 & -0.080 \\
\end{pmatrix} \quad \text{and}  \quad
C = \begin{pmatrix}
0.257 & -0.134 \\
-0.134 & 0.070 \\
\end{pmatrix}.
\end{equation}
The parameters of the Levy process were chosen to be the same from example 1 as they allow us to check consistency and asymptotic normality conditions. According to Corollary~\ref{co:cons2fv}, the estimated parameters given in \eqref{eq:2gmmrealdata} describe a MUCOGARCH model for which the GMM estimators is consistent. For the asymptotic normality, we need to verify assumption (f.8), which requires computing $m(p,\theta_0)$ with $\theta_0$ replaced by a vector formed with the entries of the matrices in \eqref{eq:2gmmrealdata}. This computation results in 2.98, and therefore (f.8) is violated, and we cannot ensure asymptotic normality. 

To assess performance of the GMM estimator used to estimate the MUCOGARCH model for real data,  we perform a simulation study using the values from \eqref{eq:2gmmrealdata} as $\theta_0$. The results are reported in Table~\ref{tb:simReal} and referred by basicCOV. We also compare it with the first step GMM and two other 2 step GMM estimators: one using $\hat{\Sigma}_{n,M}$ from \eqref{eq:sighat}  with $M = 10$ as a weight matrix and another which uses a weighing diagonal matrix formed with the diagonal of $\hat{\Sigma}_{n,M}$. These are referred in Table~\ref{tb:simReal} as fullCOV and diagCOV, respectively. Using fullCOV did not improved the estimates when compared with basicCOV and diagCOV, and that might be to do with the fact that inverting the estimated covariance matrix gave several warnings during the estimation. The estimator based on diagCOV was somehow similar to the 1 step GMM, giving smaller bias and std for the parameters $\theta^{(6)}-\theta^{(8)}$, which correspond to the matrix $B$. The fact that the 2 step GMM did not improved the 1step GMM results, might be because the sample size $n$ used to estimate the covariance matrix is too small. 

\begin{table}[t]
\centering
\begin{small}
\begin{tabular}{rrrrrrr}
\multicolumn{6}{c}{Estimated Bias - GMM} \\
 \hline
& True Value & 1 step & 2 step (BasicCOV) & 2 step ($M=10$) & 2 step (diagCOV) \\ 
 \hline
$\theta^{(1)}$ & 0.442 &  -0.024&-0.094&-0.055&-0.073    \\ 
$\theta^{(2)}$ & 0.259  & 0.016&-0.026&0.040&-0.034   \\ 
$\theta^{(3)}$ & 0.054 & 0.086&0.120&0.121&0.100   \\ 
$\theta^{(4)}$ &0.194 &0.164&0.170&0.151&0.169   \\ 
$\theta^{(5)}$ & -0.146 &0.010&0.014&-0.043&0.031    \\ 
$\theta^{(6)}$ & -0.014 &-0.085&-0.076&-0.091&-0.072    \\ 
$\theta^{(7)}$ & -0.080 &-0.081&-0.077&-0.162&-0.058    \\ 
$\theta^{(8)}$ & 0.257 &0.091&0.129&0.248&0.083 \\ 
$\theta^{(9)}$ & -0.134& 0.147&0.148&0.176&0.148   \\ 
$\theta^{(10)}$ & 0.070 &0.136&0.157&0.315&0.137 \\ 
  \hline \\
  \multicolumn{6}{c}{Estimated Std - GMM} \\
  \hline
& True Value & 1 step & 2 step (BasicCOV) & 2 step ($M=10$) & 2 step (diagCOV) \\ 
 \hline
$\theta^{(1)}$ & 0.442   &  0.101&0.084&0.207&0.100   \\ 
$\theta^{(2)}$ & 0.259   &  0.123&0.112&0.261&0.112   \\ 
$\theta^{(3)}$ & 0.054 &  0.088&0.086&0.169&0.093   \\ 
$\theta^{(4)}$ &0.194 &  0.109&0.108&0.193&0.110    \\ 
$\theta^{(5)}$ & -0.146 &  0.080&0.091&0.402&0.067    \\ 
$\theta^{(6)}$ & -0.014 &  0.073&0.081&0.398&0.065   \\ 
$\theta^{(7)}$ & -0.080 &  0.072&0.085&0.650&0.062    \\ 
$\theta^{(8)}$ & 0.257 &  0.112&0.113&0.909&0.168    \\ 
$\theta^{(9)}$ & -0.134 &  0.093&0.101&0.645&0.139    \\ 
$\theta^{(10)}$ & 0.070 &  0.091&0.103&1.137&0.126   \\ 
  \hline
\end{tabular}
\end{small}
\caption{Estimated bias and std of $\hat{\theta}_{n,r}$ (1 step GMM) and $\hat{\theta}_{n,r}^{(2)}$, the 2 step GMM with 3 different weighting matrices, namely: basicCOV from \eqref{eq:sighatBasicCOV}, fullCOV from \eqref{eq:sighat} with $M = 10$ and diagCOV, a diagonal weighting matrix formed with the diagonal entries of \eqref{eq:sighatBasicCOV}.}\label{tb:simReal}
\end{table}

%
%
\subsubsection*{Acknowledgement}
Both authors take pleasure to thank Claudia Kl\"uppelberg for motivating this work and for helpful comments. Thiago do R\^ego Sousa gratefully acknowledges support from the National Council for Scientific and Technological Development (CNPq - Brazil) and the TUM Graduate School. He also thanks the Institute of Mathematical Finance at Ulm University for its hospitality and takes pleasure to thank Carlos Am\'{e}ndola for helpful discussions.
 
 \appendix 

 \section{Proofs}\label{se:proofs}


\subsection{Auxiliary results}

Several results related to the algebra of multivariate stochastic integrals will be used here, for which we refer to Lemma~2.1 in \cite{Behme2012}. Furthermore, we need the following.

\begin{fact}[{\cite[Lemma~6.9]{Stelzer10} with drift}]\label{fct:59drif} Assume that $(X_t)_{t \in \R^{+}}$ is an adapted cadlag $\md$-valued  process satisfying $\E (\|X_t \|) < \infty$ for all $t \in \R^{+}$, $t \mapsto \E (\|X_t \|)$ is locally bounded and $\ltpr$ is an $\R^d$-valued L\'{e}vy process of finite variation with $\E (\|L_1 \|) < \infty$. Then
$$
\E \int_0^{\Delta} X_{s-} \df L_s = \int_0^{\Delta} \E (X_{s-}) \E(L_1) \df s.
$$
\end{fact}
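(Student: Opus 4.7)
The approach is to reduce the claim to the zero-drift setting of Lemma~6.9 in \cite{Stelzer10} by subtracting the deterministic trend from $L$. Since $L$ has finite variation and $\E\|L_1\| < \infty$, the L\'evy-It\^o decomposition lets us write $L_t = \E(L_1)\, t + M_t$, where $M_t := L_t - \E(L_1)\, t$ is itself a L\'evy process, of finite variation, with zero mean and $\E\|M_1\| < \infty$. Equivalently, $M$ is the compensated pure-jump martingale associated with $L$.

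By linearity of the stochastic integral,
\begin{equation*}
\int_0^{\Delta} X_{s-} \diff L_s = \Big( \int_0^{\Delta} X_{s-} \diff s \Big) \E(L_1) + \int_0^{\Delta} X_{s-} \diff M_s.
\end{equation*}
For the first, pathwise Lebesgue term, the assumption that $s \mapsto \E \|X_s\|$ is locally bounded ensures $\int_0^{\Delta} \E \|X_{s-}\| \diff s < \infty$, so Fubini's theorem yields
\begin{equation*}
\E \Big[ \Big( \int_0^{\Delta} X_{s-} \diff s \Big) \E(L_1) \Big] = \Big( \int_0^{\Delta} \E(X_{s-}) \diff s \Big) \E(L_1).
\end{equation*}

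For the second term, $M$ satisfies the hypotheses of Lemma~6.9 in \cite{Stelzer10} (finite variation, zero mean, $\E\|M_1\| < \infty$), and the process $X$ still fulfills the integrability condition inherited from the statement. That lemma therefore applies to $X$ integrated against $M$ and gives $\E \int_0^{\Delta} X_{s-} \diff M_s = 0$. Adding the two contributions produces the claimed identity.

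The main, and essentially only, subtle point is verifying that the hypotheses of the original Lemma~6.9 transfer to the pair $(X, M)$. This is immediate: subtracting a deterministic drift preserves finite variation and integrability of $L$, so $M$ inherits these, while the local boundedness of $s \mapsto \E\|X_s\|$ is assumed directly and needs no modification.
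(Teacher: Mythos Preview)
Your argument is correct and matches the paper's implicit approach: the paper states this Fact without proof, labeling it simply as ``\cite[Lemma~6.9]{Stelzer10} with drift'', and your decomposition $L_t = \E(L_1)\,t + M_t$ followed by Fubini on the drift term and the original Lemma~6.9 on the zero-mean remainder is precisely the extension that title indicates.
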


\begin{fact}\label{fct:compGen}
Let  $(A_t)_{t \in \R^{+}}$ in $M_{d^2}(\R)$, $(B_t)_{t \in \R^{+}}$ in $\monedtwo$ be adapted caglad processes satisfying $\E \|A_t \| \|B_t \| < \infty$ for all $t \in \R^{+}$, $t \mapsto \E \|A_t\| \|B_t \|$ is locally bounded and $\ltpr$ be an $\R^d$ valued L\'{evy} process satisfying Assumption 5.2 in \cite{Stelzer10}. Then,
$$
\E \int_0^t A_s \df  (\vect([L,L]_s)) B_s = (\sigma_W + \sigma_L) \int_0^t \E [ A_s \vect(I_{d}) B_s ] \df s.
$$
\end{fact}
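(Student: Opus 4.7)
\textbf{Plan for Fact \ref{fct:compGen}.} The strategy is to split the quadratic variation into its continuous and discontinuous parts and treat them separately. Under the usual L\'evy decomposition, $[L,L]_s = \Gamma_L s + [L,L]_s^\disc$, and by Assumption~\ref{as:varL1_id} (which is part of Assumption 5.2 of \cite{Stelzer10}) we have $\Gamma_L = \sigma_W I_d$. Moreover, since $\var(L_1) = \Gamma_L + \int_{\R^d} xx^\ast \nu_L(dx) = (\sigma_W+\sigma_L)I_d$, it follows that $\int_{\R^d} xx^\ast \nu_L(dx) = \sigma_L I_d$. The image L\'evy measure of $[L,L]^\disc$ on $\S_d$ therefore has mean $\sigma_L I_d$, so $([L,L]^\disc_t)_{t\in\R^+}$ is a matrix-valued L\'evy process of finite variation with $\E [L,L]^\disc_1 = \sigma_L I_d$, equivalently $\E \vect([L,L]^\disc)_1 = \sigma_L \vect(I_d)$.

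For the continuous part, the Lebesgue-type integral $\int_0^t A_s \vect(\Gamma_L)\, ds \cdot B_s$ is pathwise well-defined, and swapping expectation and integral by Fubini (justified by the hypothesis that $t\mapsto \E\|A_t\|\|B_t\|$ is locally bounded and $\|\vect(I_d)\|$ is a constant) yields the contribution $\sigma_W \int_0^t \E[A_s \vect(I_d) B_s]\, ds$. For the discontinuous part, I would read $A_s d\vect([L,L]^\disc)_s B_s$ entry-wise using the convention from Section~\ref{s:notation}: the $(i,j)$-entry equals $\sum_{k=1}^{d^2} \int_0^t A_{ik,s-} B_{1j,s-}\, d\vect([L,L]^\disc)_{k,s}$. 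Each scalar integrand $A_{ik,s-}B_{1j,s-}$ is adapted, c\`agl\`ad, satisfies $\E|A_{ik,s-} B_{1j,s-}| \le \E\|A_s\|\|B_s\| < \infty$ with this bound locally bounded in $s$, so Fact~\ref{fct:59drif} applies to each component of the finite variation L\'evy process $\vect([L,L]^\disc)$. This yields, for the discontinuous contribution,
\begin{equation*}
\sum_{k=1}^{d^2} \int_0^t \E[A_{ik,s-}B_{1j,s-}]\,\sigma_L \vect(I_d)_k\, ds = \sigma_L \int_0^t \E[(A_s \vect(I_d) B_s)_{ij}]\,ds.
\end{equation*}

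Summing the continuous and discontinuous contributions and recombining the entries into matrix form gives the claimed identity. The main technical obstacle is essentially bookkeeping, namely verifying that the integrability/local boundedness hypothesis $\E\|A_t\|\|B_t\|<\infty$ (locally bounded) transfers properly to each scalar integrand so that Fact~\ref{fct:59drif} may be invoked component-wise; this is immediate from sub-multiplicativity of the chosen norm but must be stated. A secondary care point is justifying that $[L,L]^\disc$ is of finite variation with integrable mean, which follows from $\int (\|x\|^2 \wedge 1)\nu_L(dx) < \infty$ together with the second-moment assumption from \ref{as:varL1_id}.
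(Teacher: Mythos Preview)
Your argument is correct, but the paper's proof is more compact and takes a slightly different organisational route. Rather than splitting $[L,L]$ into its continuous and discontinuous parts and then arguing entry-wise, the paper observes at once that $\vect([L,L]_s)$ is an $\R^{d^2}$-valued L\'evy process of finite variation with $\E\vect([L,L]_1)=(\sigma_W+\sigma_L)\vect(I_d)$, vectorises the matrix integral via the identity $\vect(A_s\,\diff\vect([L,L]_s)\,B_s)=(B_s^{\,*}\otimes A_s)\,\diff\vect([L,L]_s)$, and then applies Fact~\ref{fct:59drif} a single time to the whole thing (the ``with drift'' version of that fact already absorbs the continuous part, so no separate Fubini step is needed). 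What the paper's route buys is brevity and avoids the component bookkeeping; what your route buys is that it makes completely explicit how the hypothesis $\E\|A_t\|\|B_t\|<\infty$ with local boundedness feeds into Fact~\ref{fct:59drif}, which in the paper is left implicit. Either way the substance is the same compensation argument.
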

\begin{proof}
First notice that $\vect([L,L]_s)$ is an $\R^{d^2}$-valued L\'{e}vy process with finite variation. Then it follows from Fact~\ref{fct:59drif} that
\begin{equation*}
\begin{split}
& \vect \bigg( \E \int_0^t A_s \df  (\vect([L,L]_s)) B_s \bigg) = \E \int_0^t (B_s \otimes A_s) \df  (\vect([L,L]_s) \\ 
& = \int_0^t \E (B_s \otimes A_s)   \E(\vect([L,L]_1)) \df s = (\sigma_W + \sigma_L) \int_0^t \E (B_s \otimes A_s)   \vect(I_{d}) \df s \\
& = (\sigma_W + \sigma_L)  \vect \bigg( \int_0^t \E ( A_s I_d B_s ) \df s \bigg),
\end{split}
\end{equation*}
so the result follows by an application of $\vect^{-1}$.
\end{proof}

\textbf{Proof of Lemma~\ref{le:momG1}}: It follows from \cite[Proposition~4.7]{Stelzer10} (with $k = p$) that $\E\|Y_t\|^{p} < \infty$ for all $t \in \R^+$ and $t \mapsto \E\|Y_t\|^{p}$ is locally bounded. Then an application of \cite[Theorem~66 of Ch. 5]{Protter90} together with the fact that $\E \|L_1\|^{2p} < \infty$ and the definition of $\vtpr$ in \eqref{eq:Vt} gives for all $t > 0$
\begin{equation*}
\begin{split}
\E \|G_t\|^{2p} & =\E \bigg\| \int_0^t V_{s-}^{1/2} \diff L_s \bigg\|^{2p} \leq c \int_0^t \E \| V_{s-}^{1/2}\|^{2p} \diff s \leq c \int_0^t \E \| C +Y_{s-}\|^{p} \diff s.
\end{split}
\end{equation*}

\begin{lemma}\label{le:covYGG}
Assume that Assumptions \ref{as:Elzero}-\ref{as:El14fin}, \textbf{b} and \ref{as:Y0E4} hold. Then,
\begin{equation}\label{eq:lecovYG}
\begin{split}
\quad\quad & \cov(\vect(Y_{\Delta}), \vect(\bm{G}_1 \bm{G}_1^{\ast})) = \cov(\vect(Y_{\Delta}), \vect(G_{\Delta} G_{\Delta}^{\ast})) \\
& =  \var (\vect (V_0))   (e^{\calb^{\ast}\Delta} - I_{d^2}) [ (\sigma_W + \sigma_L)(\calb^{\ast})^{-1} - 2((A \otimes A)^{\ast})^{-1} ], \quad \Delta \geq 0.
\end{split}
\end{equation}
\end{lemma}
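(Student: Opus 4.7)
My plan is to expand both $\vect(Y_\Delta)$ and $\vect(G_\Delta G_\Delta^*)$ into their ``predictable part plus compensated martingale part'' decompositions, and then evaluate the resulting cross-covariances piece by piece. Moving the compensator of $d\vect([L,L]^\disc)$ into the drift turns the SDE \eqref{eq:dYt} into
\begin{equation*}
d\vect(Y_t)=\calb\vect(Y_{t-})\,dt+\sigma_L(A\otimes A)\vect(C)\,dt+(A\otimes A)(V_{t-}^{1/2}\otimes V_{t-}^{1/2})\,dM_t,
\end{equation*}
with $M_s:=\vect([L,L]_s^\disc)-\sigma_L s\vect(I_d)$ a pure-jump martingale, so variation of constants yields
$\vect(Y_\Delta)=e^{\calb\Delta}\vect(Y_0)+\text{(det.)}+\int_0^\Delta e^{\calb(\Delta-s)}(A\otimes A)(V_{s-}^{1/2}\otimes V_{s-}^{1/2})\,dM_s$. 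Likewise, It\^o's product rule applied to $G_tG_t^*$ together with $\vect(V^{1/2}\cdot V^{1/2})=\vect(V)$ gives
\begin{equation*}
\vect(G_\Delta G_\Delta^*)=\int_0^\Delta(G_{s-}\otimes V_{s-}^{1/2}+V_{s-}^{1/2}\otimes G_{s-})\,dL_s+(\sigma_W+\sigma_L)\int_0^\Delta\vect(V_{s-})\,ds+\int_0^\Delta(V_{s-}^{1/2}\otimes V_{s-}^{1/2})\,dM_s.
\end{equation*}

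I would then evaluate three cross-covariances. First, the covariance of $\vect(Y_\Delta)$ with the $dL$-integral in $\vect(G_\Delta G_\Delta^*)$ vanishes: applying It\^o's product rule and using that the expected rate of the joint quadratic variation $[L,\vect([L,L]^\disc)^*]$ is $\int\vect(xx^*)x^*\nu_L(dx)=0$ by Assumption~\ref{as:lp_qv}, one is left with a homogeneous linear integral equation in $\calb$ with zero initial condition. Second, for the covariance with the drift $(\sigma_W+\sigma_L)\int_0^\Delta\vect(V_{s-})\,ds$, Fubini together with the identity $\cov(\vect(Y_0),\vect(V_s))=\var(\vect(V_0))e^{\calb^*s}$ (obtained by transposing $\acov_V(s)=e^{\calb s}\var(\vect(V_0))$ from Proposition~\ref{pr:SecStrV0} and using that $\var(\vect(V_0))$ is symmetric) handles the $e^{\calb\Delta}\vect(Y_0)$ piece of $\vect(Y_\Delta)$, yielding $e^{\calb\Delta}(\sigma_W+\sigma_L)\var(\vect(V_0))(\calb^*)^{-1}(e^{\calb^*\Delta}-I_{d^2})$; the $dM$-piece of $\vect(Y_\Delta)$ contributes through its joint quadratic covariation with $\vect(V_{s-})$, controlled by Assumption~\ref{as:Equadquad}. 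Third, the covariance of the $dM$-piece of $\vect(Y_\Delta)$ with the $dM$-integral in $\vect(G_\Delta G_\Delta^*)$ is given by their joint quadratic variation, whose expected rate is $\rho_L(I_{d^2}+K_d+\vect(I_d)\vect(I_d)^*)$ by Assumption~\ref{as:Equadquad}, producing an integral of the form $\int_0^\Delta e^{\calb(\Delta-s)}(A\otimes A)\rho_L\E[(V_{s-}^{1/2}\otimes V_{s-}^{1/2})(I_{d^2}+K_d+\vect(I_d)\vect(I_d)^*)(V_{s-}^{1/2}\otimes V_{s-}^{1/2})]\,ds$, whose integrand is constant in $s$ under stationarity.

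The hard part will be consolidating the pieces. The $e^{\calb\Delta}$ prefactors coming from the $\vect(Y_0)$-contribution and the expected moment matrix $\rho_L\E[(V^{1/2}\otimes V^{1/2})(I_{d^2}+K_d+\vect(I_d)\vect(I_d)^*)(V^{1/2}\otimes V^{1/2})]$ from the $dM$-cross term must combine into the clean form $\var(\vect(V_0))(e^{\calb^*\Delta}-I_{d^2})\bigl[(\sigma_W+\sigma_L)(\calb^*)^{-1}-2((A\otimes A)^*)^{-1}\bigr]$. This reorganization is driven by the stationary Lyapunov-type identity for $\var(\vect(V_0))$ in Proposition~\ref{pr:SecStrV0}, which relates the above moment matrix to $\calb\var(\vect(V_0))+\var(\vect(V_0))\calb^*$ up to an $(A\otimes A)$-contribution; applying this identity produces both the disappearance of the unwanted $e^{\calb\Delta}$-prefactor and the appearance of the $-2((A\otimes A)^*)^{-1}$ summand in the final expression.
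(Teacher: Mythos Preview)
Your plan is a legitimate strategy, but it takes a different and considerably more laborious route than the paper, and the ``hard part'' you flag at the end is genuinely hard in your setup---harder than your sketch suggests.

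\textbf{What the paper does instead.} The paper never decomposes $\vect(Y_\Delta)$ via variation of constants and never introduces the fourth $L$-moment $\rho_L$ or a Lyapunov identity. After splitting $G_\Delta G_\Delta^*=A_\Delta+A_\Delta^*+C_\Delta$ (your $dL$-integral, its transpose, and the $d[L,L^*]$-integral), the covariance with $A_\Delta$ is shown to vanish exactly as you outline. For $C_\Delta$ the paper uses a single key trick: inverting $A\otimes A$ in the SDE \eqref{eq:dYt} to write
\[
\int_0^\Delta (V_{s-}^{1/2}\otimes V_{s-}^{1/2})\,d\vect([L,L^*]_s^\disc)
=(A\otimes A)^{-1}\Bigl[\vect(Y_\Delta)-\vect(Y_0)-\int_0^\Delta(B\otimes I+I\otimes B)\vect(Y_{s-})\,ds\Bigr].
\]
This converts $\cov(\vect(C_\Delta),\vect(Y_\Delta))$ entirely into second moments of $Y$ at pairs of times, all of which are supplied by $\acov_Y(h)=e^{\calb h}\var(\vect(Y_0))$. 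The algebra then collapses directly to the stated formula without ever seeing $\E[(V^{1/2}\otimes V^{1/2})R(V^{1/2}\otimes V^{1/2})]$.

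\textbf{What your route costs.} Your decomposition produces three nonzero pieces (2a, 2b, 4 in your notation): the $e^{\calb\Delta}\vect(Y_0)$ vs.\ drift term, the $dM$-piece of $Y_\Delta$ vs.\ drift term, and the $dM$ vs.\ $dM$ term. The latter two both contain the matrix $Q:=\E[(V^{1/2}\otimes V^{1/2})R(V^{1/2}\otimes V^{1/2})]$, where $R$ is the expected quadratic-variation rate of $M$. To eliminate $Q$ you need the stationary identity $(A\otimes A)Q(A\otimes A)^*=-\calb\var(\vect Y_0)-\var(\vect Y_0)\calb^*$; note this holds without Assumption~\ref{as:Equadquad}, so your invocation of \ref{as:Equadquad} is unnecessary and in fact exceeds the lemma's hypotheses. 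Even granted this identity, you must still show that the $e^{\calb\Delta}$ prefactor from piece 2a cancels against double-integral contributions from piece 2b and that what remains reorganizes into $\var(\vect V_0)(e^{\calb^*\Delta}-I)[\cdots]$. That cancellation is real but delicate---it does \emph{not} follow from $e^{\calb\Delta}\var=\var e^{\calb^*\Delta}$, which is false in general---and your final paragraph does not indicate how it goes. The paper's inversion trick bypasses all of this.
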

\begin{proof}
Since \ref{as:El14fin}, \ref{as:Elzero} and \ref{as:YtsecSt} hold, we can apply Lemma~\ref{le:momG1} with $p=2$ to conclude that both $\|\vect(Y_\Delta)\|$ and $\|\bm{G}_1 \bm{G}_1^{\ast}\|$ are square integrable random variables and thus, the covariance at the left hand side of \eqref{eq:lecovYG} is finite.
Integration by parts formula \cite[p.~111]{Stelzer10} gives
\begin{equation}\label{eq:ipG1G1}
G_{\Delta} G_{\Delta}^{\ast} = \int_0^{\Delta} V_{s-}^{1/2} \df L_s G_{s-}^{\ast} + \int_0^{\Delta} G_{s-} \df L_s ^{\ast}  V_{s-}^{1/2} + \int_0^{\Delta}  V_{s-}^{1/2}  \df [L,L^{\ast}]_s  V_{s-}^{1/2} := A_{\Delta} + A_{\Delta}^{\ast} + C_{\Delta}.
\end{equation}
It follows from Lemma~\ref{le:momG1}(a) and (b) with $p = 2$ together with the Cauchy-Schwarz inequality that
\begin{equation}\label{eq:FinIntEVG}
\int_0^t \E ( \|V_{s-}^{1/2}\|_2 \|G_{s-}\|_2 )^2 \diff s \leq \int_0^t \big(\E \|V_{s-}\|_2^2 \big)^{1/2} \big(\E \|G_{s-}\|_2^4 \big)^{1/2} \diff s < \infty,
\end{equation}
where the finiteness is due to the fact that the integrand is locally bounded, and thus, also bounded on $(0,t)$. Therefore $(A_t)_{t \in \R^+}$ is a martingale and $A_t \in L^2$ for all $t \geq 0$.
Thus, the integration by parts formula, the formula $\df (\vect(A_s))^{\ast} = \df L_s^{*} (G_{s-}^{\ast} \otimes V_{s-}^{1/2})$ (Lemma~2.1(vi) in \cite{Behme2012}) imply
\begin{equation}\label{eq:EvecYA}
\begin{split}
\quad\,\, & \cov(\vect(Y_{\Delta}), \vect(A_{\Delta})) \\
& =  \E \big( \vect(Y_{\Delta}) ( \vect(A_{\Delta}))^{\ast} \big) - \E (\vect(Y_{\Delta}) )  \E ( \vect(A_{\Delta}))^{\ast} \\
& = \E \bigg(  \int_0^{\Delta} \vect(Y_{s-})  \df ( \vect(A_s))^{\ast} +  \int_0^{\Delta}   \df \vect(Y_{s}) (\vect(A_{s-}))^{\ast} + [\vect(Y),(\vect(A))^{\ast}]_{\Delta} \bigg) - 0 \\
& = 0 + \E \int_0^{\Delta} \df \vect(Y_{s}) (\vect(A_{s-}))^{\ast} + \E (  [\vect(Y),(\vect(A))^{\ast}]_{\Delta}).
\end{split}
\end{equation}
The first expectation in \eqref{eq:EvecYA} vanishes since 
\begin{equation*}
\begin{split}
& \int_0^{\Delta} \E \|\vect(Y_{s-})\|^2 \|G_{s-}\|^2 \|V_{s-}^{1/2}\|^2 \df s \\
& \leq \int_0^{\Delta} (\E \|\vect(Y_{s-})\|^4)^{1/2} ( \E \|G_{s-}\|^8 )^{1/4} ( \E \|V_{s-}^{1/2}\|^8 )^{1/4} \df s < \infty
\end{split}
\end{equation*}
by the generalized H\"older inequality with $(1/2 + 1/4 + 1/4 = 1)$ (see e.g. \cite[Theorem~2.1]{kufner1977function}), Lemma~\ref{le:momG1} and the fact that $\ltpr$ is an $L^2$-martingale. Let $\tilde{C}:= (B \otimes I + I \otimes B)$ and recall from p. 84 in \cite{Stelzer10} that 
\begin{equation}\label{eq:2}
\df \vect(Y_s) = \tilde{C} \vect(Y_{s-}) \df s + (A \otimes A) (\vshalf \otimes \vshalf) \df \vect ([L,L]_s^\disc).
\end{equation}
Using \eqref{eq:2}, the bilinearity of the quadratic covariation process, \cite[eq. (2.1)]{Behme2012}, Lemma~\ref{le:momG1}, Facts~\ref{fct:59drif}, \ref{as:lp_qv}, \eqref{eq:FinIntEVG} and the It\^o isometry we obtain
\beam
 &   & [\vect(Y),(\vect(A))^{\ast}]_{\Delta} \nonumber\\
& = & \bigg[  \int_0^{\cdot} \tilde{C} \vect(Y_{s-}) \df s + \int_0^{\cdot} (A \otimes A) (\vshalf \otimes \vshalf) \df \vect ([L,L]_s^\disc),  \int_0^{\cdot} \df L_s^{*} (G_{s-}^{\ast} \otimes V_{s-}^{1/2}) \bigg]_{\Delta} \nonumber\\
& =  & \int_0^{\Delta} (A \otimes A) (\vshalf \otimes \vshalf) \df[\vect ([L,L]^\disc), L^{\ast} ]_s (G_{s-}^{\ast} \otimes V_{s-}^{1/2})\label{eq:qvVYAa}.
\eeam
Recall that for arbitrary matrices $M \in M_{m,n}(\R)$ and  $N \in M_{k,l}(\R)$ it holds $\|A \otimes B\|_2 = \|A \|_2 \|B\|_2$ \cite[Fact~9.9.61]{Bernstein05}. This together with the H\"older inequality with (3/4 + 1/4 = 1) and Lemma~\ref{le:momG1} with $p=4$ gives
\begin{equation*}
\begin{split}
& \int_0^\Delta  \E \|\vshalf \otimes \vshalf\|_2 \| (G_{s-}^{\ast} \otimes V_{s-}^{1/2})  \|_2 \df s = \int_0^\Delta  \E \|\vshalf\|_2^3 \| G_{s-}^{\ast} \|_2 \df s \\
&  = \int_0^\Delta \E \|V_{s-}\|_2^{3/2} \| G_{s-}^{\ast} \|_2 \df s \leq  \int_0^\Delta (\E \|V_{s-}\|_2^2)^{3/4} (\E \|  G_{s-}^{\ast} \|_2^4)^{1/4} \df s < \infty.
\end{split}
\end{equation*}
Thus, applying expectations at both sides of \eqref{eq:qvVYAa} gives
\begin{equation}\label{eq:qvVYA}
E [\vect(Y),(\vect(A))^{\ast}]_{\Delta}  = 0.
\end{equation}

Let $l_{s}:= \E \vect(Y_{s})(\vect(A_{s}))^{\ast}$ and notice that it follows from Lemma~\ref{le:momG1} and the Cauchy-Schwarz inequality that $\E \|l_s\| < \infty$ and $s \mapsto \E \|l_s\|$ is locally bounded. Use \eqref{eq:2}, \eqref{eq:qvVYA}, the compensation formula, \cite[Proposition~7.1.9]{Bernstein05}, $\E \vect( V_s) \vect(A_{s-}) = l_s$, the It\^o isometry and \ref{as:varL1_id} to get
\begin{equation}\label{eq:ls_int}
\begin{split}
l_{\Delta} & =  \E \int_0^{\Delta} \df \vect(Y_{s}) (\vect(A_{s-}))^{\ast} \\
& = \E \int_{0}^{\Delta}  \big[ \tilde{C} \vect(Y_{s-}) \df s + (A \otimes A) (\vshalf \otimes \vshalf) \df (\vect ([L,L]_s^\disc) \big] (\vect(A_{s-}))^{\ast} \\
& = \tilde{C} \int_{0}^{\Delta} \E \vect(Y_{s-})(\vect(A_{s-}))^{\ast} \df s \\
& \quad\quad+ \sigma_L  \int_{0}^{\Delta} \E [ (A \otimes A ) (\vshalf \otimes \vshalf) \vect(I_d)(\vect(A_{s-}))^{\ast}] \df s \\
& =  (\tilde{C} +  \sigma_L(A \otimes A )  )\int_{0}^{\Delta} l_s \df s.
\end{split}
\end{equation}
Solving the matrix-valued integral equation in \eqref{eq:ls_int} and using that $A_0 = 0$ implies $l_{0} = 0$, gives $l_{s} = 0$ for all $s \geq 0$ (see \cite{Haug07}). Thus, it follows from \eqref{eq:EvecYA}-\eqref{eq:ls_int} that
\begin{equation}\label{eq:cvYAast}
\cov(\vect(Y_{\Delta}), \vect(A_{\Delta})) = 0,
\end{equation}
and, as a consequence $\cov(\vect(Y_{\Delta}), \vect(A_{\Delta}^\ast)) = 0$.
Let $\calv_{s-} := \vshalf \otimes \vshalf$. Then,
\beam
& & \vect(C_\Delta) = \int_0^{\Delta} \calv_{s-}  \df \vect([L,L^{\ast}]_s) = \int_0^{\Delta} \calv_{s-}  \df \vect([L,L^{\ast}]^\disc_s) \label{eq:IVssplit} \\
& & \quad \quad +  \sigma_W\int_0^{\Delta} (\vshalf \otimes \vshalf) \vect(I_d)  \df s =  \int_0^{\Delta} \calv_{s-}  \df \vect([L,L^{\ast}]^\disc_s) + \sigma_W\int_0^{\Delta} \vect(V_{s-})  \df s \nonumber.
\eeam
Using the compensation formula, Fact~\ref{fct:59drif} and the stationarity of $\vspr$ we get 
\begin{equation}\label{eq:EIVdsYd}
\begin{split}
\E \int_0^{\Delta} \calv_{s-}  \df \vect([L,L^{\ast}]_s) & = (\sigma_W + \sigma_L) \izede \E \calv_{s-} \vect(I_d) \df s = \Delta (\sigma_W + \sigma_L) \E \vect(V_0).
\end{split}
\end{equation}
Additionally, it follows from Lemma~\ref{le:momG1} that $\E \|\vect(V_s) \vect(Y_\Delta)^*\| < \infty$ for all $s \geq 0$ and that $s \mapsto \E \|\vect(V_s) \vect(Y_\Delta)^*\|$ is locally bounded. Then,
\begin{equation*}
\begin{split}
\quad\quad & \E \bigg(\int_0^{\Delta} \vect(V_{s-}) \df s \, (\vect(Y_{\Delta}))^{\ast} \bigg) = \izede \E \vect(V_{s-}) (\vect(Y_{\Delta}))^{\ast}  \df s \\
& = \Delta \vect(C) (\E \vect(Y_0))^{\ast} + \izede \E \vect(Y_{s}) (\vect(Y_{\Delta}))^{\ast}  \df s.
\end{split}
\end{equation*}
Now it follows from the invertibility of $(A \otimes A)$ and from the second equation following (3.5) in \cite{Stelzer10} that
\begin{equation}\label{eq:iVsLLd}
\begin{split}
\quad\quad &  \int_0^{\Delta} \calv_{s-}  \df \vect([L,L^{\ast}]^\disc_s) \\
& = (A \otimes A)^{-1} \bigg( \vect(Y_\Delta) - \vect(Y_0) - \int_0^{\Delta} (B\otimes I + I \otimes B) \vect(Y_{s-}) \df s \bigg).
\end{split}
\end{equation}
The representation in \eqref{eq:iVsLLd} gives
\beam
 & & \E \bigg[ \bigg( \int_0^{\Delta} \calv_{s-}  \df \vect([L,L^{\ast}]^\disc_s) \bigg) (\vect(Y_{\Delta}))^{\ast}  \bigg] \nonumber\\
& = & \E \bigg[ (A \otimes A)^{-1}  \Big( \vect(Y_\Delta) - \vect(Y_0) - \int_0^{\Delta} (B\otimes I + I \otimes B) \vect(Y_{s-}) \df s  \Big) (\vect(Y_{\Delta}))^{\ast} \bigg] \nonumber\\
& = & (A \otimes A)^{-1}  \bigg[ \E \vect(Y_\Delta)  (\vect(Y_\Delta))^{\ast} - 
\E \vect(Y_0)  (\vect(Y_\Delta))^{\ast} \nonumber\\
& & \quad -(B\otimes I + I \otimes B) \izede \E \vect(Y_{s-}) (\vect(Y_{\Delta}))^{\ast} \df s  \bigg] \label{eq:EIVsYde}.
\eeam
Using the definition of $C_\Delta$ in \eqref{eq:ipG1G1}, together with \eqref{eq:IVssplit}, \eqref{eq:EIVdsYd} and \eqref{eq:EIVsYde} gives
\beao
 & &  \cov(\vect (C_{\Delta}), \vect(Y_{\Delta})) = (A \otimes A)^{-1}  \bigg[\E \vect(Y_\Delta)  (\vect(Y_{\Delta}))^{\ast} - 
\E \vect(Y_0)  (\vect(Y_{\Delta}))^{\ast} \nonumber\\
& &  -(B\otimes I + I \otimes B) \bigg( \izede \E \vect(Y_{s-}) (\vect(Y_{\Delta}))^{\ast} \df s \bigg)  \bigg] \nonumber\\
& & + \Delta\sigma_W \vect(C) (\E \vect(Y_0))^{\ast} + \sigma_W \izede \E \vect(Y_{s-}) (\vect(Y_{\Delta}))^{\ast} \df s \nonumber\\
& &- \Delta(\sigma_W + \sigma_L) \E \vect(V_0) \E (\vect(Y_{\Delta}))^{\ast} \nonumber\\
& & = [\sigma_W I_{d^2} - (A \otimes A)^{-1}(B\otimes I + I \otimes B) ]  \izede \E \vect(Y_{s}) (\vect(Y_{\Delta}))^{\ast}  \df s \nonumber\\
& & + (A \otimes A)^{-1} \big[ \var(\vect(Y_0)) - \cov( \vect(Y_{0}), \vect(Y_{\Delta})) \big] - \Delta \sigma_L \vect(C) \E  (\vect(Y_{0}))^{\ast} \nonumber\\
& & - \Delta (\sigma_W + \sigma_L) \E \vect(Y_0) \E (\vect(Y_{0}))^{\ast}, \label{eq:cCYd}
\eeao
where the last equality follows from $V_0 = C + Y_0$ and the stationarity of $\yspr$. Using \eqref{eq:EV} it follows first that
\begin{equation}\label{eq:EYsYD}
\begin{split}
 \izede \E \vect(Y_{s}) (\vect(Y_{\Delta}))^{\ast}  \df s & = \izede e^{\calb(\Delta-s)} \var(\vect(Y_0)) \df s + \Delta \E \vect (Y_0) \E ( \vect (Y_0))^{\ast} \\
& = \calb^{-1} (e^{\calb \Delta} - I_{d^2}) \var(\vect(Y_0)) + \Delta \E \vect (Y_0) \E ( \vect (Y_0))^{\ast}, 
 \end{split}
\end{equation}
and second that
\begin{equation}\label{eq:subst}
 \var(\vect(Y_0)) - \cov( \vect(Y_{0}), \vect(Y_{\Delta})) = - (e^{\calb \Delta} - I_{d^2}) \var(\vect(Y_0)).
\end{equation}
Substituting $B\otimes I + I \otimes B = \calb - \sigma_L(A \otimes A)$, using \eqref{eq:EYsYD}, \eqref{eq:subst}, \eqref{eq:def:Calb} and the formula for $\E \vect(Y_0)$ in \eqref{eq:EV} gives
\begin{equation}\label{eq:cCdYdf}
\begin{split}
\quad\quad & \cov(\vect(C_{\Delta}), \vect(Y_{\Delta})) \\
& = \big[\sigma_W I_{d^2} - (A \otimes A)^{-1}(\calb - \sigma_L(A \otimes A)) \big]  \big[ \calb^{-1} (e^{\calb \Delta} - I_{d^2})\var(\vect(Y_0)) \\
& + \Delta \E \vect (Y_0) \E ( \vect (Y_0))^{\ast} \big] \\
& - (A \otimes A)^{-1} (e^{\calb \Delta} - I_{d^2}) \var(\vect(Y_0)) - \Delta \sigma_L \vect(C) \E  (\vect(Y_{0}))^{\ast} \\
& - \Delta (\sigma_W + \sigma_L) \E \vect(Y_0) \E (\vect(Y_{0}))^{\ast}\\
 & =  \big[ (\sigma_W + \sigma_L)\calb^{-1} - 2(A \otimes A)^{-1} \big] (e^{\calb \Delta} - I_{d^2}) \var(\vect(Y_0)) \\
 & \quad\quad - \big[ (A \otimes A)^{-1} \calb \E \vect (Y_0) + \sigma_L \vect(C)   \big] \Delta \E (\vect (Y_0))^{\ast} \\
& =  \big[ (\sigma_W + \sigma_L)\calb^{-1} - 2(A \otimes A)^{-1} \big] (e^{\calb \Delta} - I_{d^2}) \var(\vect(Y_0)) \\
 & \quad\quad - \big[ (A \otimes A)^{-1} \calb (-\sigma_L \calb^{-1} (A \otimes A) \vect(C)) + \sigma_L \vect(C)   \big] \Delta \E (\vect (Y_0))^{\ast} \\
& = \big[ (\sigma_W + \sigma_L)\calb^{-1} - 2(A \otimes A)^{-1} \big] (e^{\calb \Delta} - I_{d^2}) \var(\vect(Y_0)).
\end{split}
\end{equation}

Finally, the result of the Lemma follows from \eqref{eq:ipG1G1}, \eqref{eq:cvYAast}, \eqref{eq:cCdYdf} and the fact that
$$
\cov(\vect(Y_{\Delta}), \vect(G_{\Delta} G_{\Delta}^{\ast})) = (\cov( \vect(G_{\Delta} G_{\Delta}^{\ast}), \vect(Y_{\Delta})) )^{\ast} = 
( \cov(\vect(C_{\Delta}), \vect(Y_{\Delta})))^{\ast}.
$$
\end{proof}

\subsection{Proof of Lemma~\ref{le:acovgg}}

(i) The proof of Lemma~\ref{le:acovgg} (i) follows directly from Lemma~\ref{le:covYGG} combined with (5.7) in \cite{Stelzer10}.\\
(ii) Denoting by $\|\cdot \|_F$ the Frobenius norm we have by Lemma~\ref{le:momG1}(b) with $p = 2$
\beao
\E \|\vect(\bm{G}_1 \bm{G}_1^{\ast}) \vect(\bm{G}_1 \bm{G}_1^{\ast})^{\ast}\|_2 & = & \E \|\vect(\bm{G}_1 \bm{G}_1^{\ast})\|_2^2 = \E \|\bm{G}_1\bm{G}_1^{\ast}\|^2_F \\
& = & tr(\bm{G}_1\bm{G}_1^{\ast}\bm{G}_1\bm{G}_1^{\ast}) = \E \|\bm{G}_1\|_2^4 < \infty.
\eeao
Let $a_s := \vect(G_s G_s^{\ast}), s \in [0,\Delta]$ and use the integration by parts formula to write
\begin{equation}\label{eq:adads}
\begin{split}
a_{\Delta} a_{\Delta}^{\ast} & = \int_0^{\Delta} a_{s-} \df (a_s^{\ast}) +  \int_0^{\Delta} \df a_s  (a_{s-}^{\ast}) + [a,a^{\ast}]_{\Delta} \\
& = \bigg(  \int_0^{\Delta} \df a_s  (a_{s-}^{\ast}) \bigg)^{\ast} +  \int_0^{\Delta} \df a_s  (a_{s-}^{\ast}) +  [a,a^{\ast}]_{\Delta}
\end{split}
\end{equation}
hence we only need to prove that the random variables
\begin{equation*}
  \int_0^{\Delta} \df a_s  (a_{s-}^{\ast}) \quad \text{and} \quad   [a,a^{\ast}]_{\Delta},
\end{equation*}
have finite expectations and compute them in closed form. From \eqref{eq:ipG1G1}, Lemma~2.1(vi) in \cite{Behme2012} and the symmetry of $\vtpr$ it follows that 
\beam
\df a_t & = & \df (\vect(G_t G_t^{\ast})) \nonumber\\
& = & \df \bigg(\vect\bigg(  \int_0^{t} V_{s-}^{1/2} \df L_s G_{s-}^{\ast} + \int_0^{t} G_{s-} \df L_s ^{\ast}  V_{s-}^{1/2} + \int_0^{t}  V_{s-}^{1/2}  \df [L,L^\ast]_s  V_{s-}^{1/2}  \bigg) \bigg)\nonumber\\
& = & \df \bigg(   \int_0^{t} (G_{s-}\otimes \vshalf) \df L_s  + \int_0^{t} (\vshalf \otimes G_{s-}) \df L_s + \int_0^{t}  (\vshalf \otimes \vshalf)  \df \vect([L,L^{\ast}]_s)    \bigg) \nonumber\\
& = & (G_{t-}\otimes \vthalf + \vthalf \otimes G_{t-}) \df L_t + (\vthalf \otimes \vthalf)  \df \vect([L,L^{\ast}]_t), \quad t \geq 0 \label{eq:dat}. 
\eeam
By the sub-multiplicative property of $\|\cdot\|_2$, the generalized H\"older inequality with $(1/4 + 1/4 + 1/2 = 1)$ we have
\beam
& & \int_0^{\Delta} \E \|G_{s-}\otimes \vshalf \|_2^2 \| a_{s-} \|_2^2 \df s = \int_0^{\Delta} \E \|G_{s-}\|_2^2  \|\vshalf \|_2^2 \| \vect(G_{s-} G_{s-}^{\ast}) \|_2^2 \df s\label{eq:lbigs} \\
&= & \int_0^{\Delta} \E \|G_{s-}\|_2^2  \|\vshalf \|_2^2 \| G_{s-} G_{s-}^{\ast} \|_2^2 \df s \leq \int_0^{\Delta} (\E \|G_{s-}\|_2^8 )^{1/4} ( \E \|V_{s-}\|_2^4)^{1/4}  (\E \|G_{s-}\|_2^8 )^{1/2} \df s\nonumber,
\eeam
which is finite by Lemma~\ref{le:momG1} with $p = 4$. Additionally, similar calculations and Lemma~\ref{le:momG1} with $p = 2$ shows that
$\E (\|\vshalf \otimes \vshalf\|_2) \|a_{s-}\|_2 \leq  ( \E \|V_{s-}\|_2^2)^{1/2}  (\E \|G_{s-}\|_2^4 )^{1/2} < \infty $ for all $s > 0$ and the map $s \mapsto \E (\|\vshalf \otimes \vshalf\|_2 \|a_{s-}\|_2)$ is locally bounded. Thus it follows from \eqref{eq:dat}, the It\^o isometry, the fact that $[L,L^\ast]_t = [L,L^\ast]_t^\disc + \sigma_w I_d t$ and fact~\ref{fct:compGen} that
\begin{equation}\label{eq:Eidasas}
\begin{split}
\quad\,\, & \E  \int_0^{\Delta} \df a_s  (a_{s-}^{\ast}) \\
& = \E \bigg( \int_0^{\Delta} (G_{s-}\otimes \vshalf + \vshalf \otimes G_{s-}) \df L_s a_{s-}^{\ast} + \int_0^{\Delta} (\vshalf \otimes \vshalf)  \df ( \vect([L,L^{\ast}]_s) a_{s-}^{\ast}  \bigg) \\
& = (\sigma_L + \sigma_W) \bigg(  \int_0^{\Delta} \E \big( (\vshalf \otimes \vshalf) \vect(I_d) a_{s-}^{\ast} \big) \df s \bigg) \\
& = (\sigma_L + \sigma_W) \int_0^{\Delta} \E ( \vect(V_{s-})a_{s-}^{\ast}  ) \df s.
\end{split}
\end{equation}
It follows from (5.6) in \cite{Stelzer10} that 
\begin{equation}\label{eq:iEas}
\izede \E a_{s-}^{\ast}  \df s = \izede \big( \vect((\sigma_L + \sigma_W)s \E V_0) \big)^{\ast} \df s = \frac{1}{2} (\sigma_L + \sigma_W) \Delta^2 \E \vect( V_0)^{\ast}.
\end{equation}
Since we assumed here that all hypothesis for using Lemma~\ref{le:covYGG} are valid, we can use \eqref{eq:lecovYG} with $\Delta = s$ to get
\begin{equation}\label{eq:icovYa}
\begin{split}
\quad\quad &  \izede \cov(\vect(Y_{s-}), a_{s-})  \df s  \\
& = \var (\vect (Y_0)) \bigg( \izede    (e^{\calb^{\ast}s} - I_{d^2})  \df s \bigg) \big[ (\sigma_W + \sigma_L)(\calb^{\ast})^{-1} - 2((A \otimes A)^{\ast})^{-1} \big] \\
& = \var (\vect (Y_0)) \tilde{\calb},
\end{split}
\end{equation}
where $\tilde{\calb}$ is defined in \eqref{eq:Btilde}.
Using \eqref{eq:Eidasas}, \ref{as:YtsecSt} \eqref{eq:iEas}, \eqref{eq:icovYa} gives 
\begin{equation}\label{eq:IEasds}
\begin{split}
\int_0^{\Delta} \E  \vect(V_{s-})a_{s-}^{\ast}   \df s & =  \izede  \cov(\vect(V_{s-}), a_{s-})  \df s  +   (\E \vect(V_s))\izede \E (a_{s-}^{\ast})  \df s  \\
& =    \izede  \cov(\vect(Y_{s-}), a_{s-})  \df s + 
(\E \vect(V_0))\izede \E (a_{s-}^{\ast})  \df s  \\
& = \frac{1}{2}(\sigma_L + \sigma_W)\Delta^2\E \vect(V_0)  E  \vect( V_0)^{\ast}  +  \var (\vect (Y_0)) \tilde{\calb} \\
& =  (\sigma_L + \sigma_W)^{-1} D,
\end{split}
\end{equation}
where $D$ is defined in \eqref{eq:E}.
Let $f_s:= (G_{s-}\otimes \vshalf + \vshalf \otimes G_{s-}), s \geq 0$ and recall $\calv_{s-} = \vshalf \otimes \vshalf$. Using \eqref{eq:ipG1G1}, Lemma~2.1(vi) in \cite{Behme2012} and the symmetry of $\vshalf$ gives
\begin{equation}\label{eq:i1234}
\begin{split}
\quad\quad\,\, & [a,a^{\ast}]_{\Delta} \\
& = \bigg[ \vect\bigg( \int_0^{\cdot} V_{s-}^{1/2} \df L_s G_{s-}^{\ast} + \int_0^{\cdot} G_{s-} \df L_s ^{\ast}  V_{s-}^{1/2} + \int_0^{\cdot}  V_{s-}^{1/2}  \df \qvl_s  V_{s-}^{1/2} \bigg), \\
& \bigg(\vect\bigg( \int_0^{\cdot} V_{s-}^{1/2} \df L_s G_{s-}^{\ast} + \int_0^{\cdot} G_{s-} \df L_s ^{\ast}  V_{s-}^{1/2} + \int_0^{\cdot}  V_{s-}^{1/2}  \df \qvl_s  V_{s-}^{1/2} \bigg)\bigg)^{\ast} \bigg]_{\Delta} \\
& = \bigg[  \int_0^{\cdot} f_{s-} \df L_s + \int_0^{\cdot}  \calv_{s-}  \df \vect(\qvl_s) , \int_0^{\cdot} \df L_s^{\ast} f_{s-}^{\ast} +  \int_0^{\cdot} \df (\vect(\qvl_s)^{\ast})  \calv_{s-} \bigg]_{\Delta} \\
& = \izede f_{s-} \df\qvl_s  f_{s-}^{\ast} + \izede  f_{s-}  \df[L, \vect(\qvl)^{\ast}]_s\calv_{s-} \\
& +   \izede \calv_{s-} \df[\vect(\qvl), L^{\ast} ]_s  f_{s-}^{\ast} +  \izede  \calv_{s-}  \df[ \vect(\qvl), \vect(\qvl)^{\ast} ]  \calv_{s-} \\
& := I_1 + I_2 + I_3 + I_4.
\end{split}
\end{equation}
By Lemma~\ref{le:momG1} with $p =2$ and similar calculations as in \eqref{eq:lbigs} it follows that $\E \|\calv_{s-}\| \| f_{s-} \| < \infty$ for all $s > 0$ and the map $s \mapsto \E \|\calv_{s-}\| \|f_{s-} \|$ is locally bounded. Thus, it follows from \ref{as:lp_qv} that we have $\E I_2 = \E I_3 = 0$. Now, Lemma~\ref{le:momG1} gives $\E \|\calv_{s-}\|^2 < \infty$ for all $s > 0$ and local boundedness of the map $s \mapsto \E \|\calv_{s-}\|^2$. Using the second-order stationarity of $\vspr$ in \ref{as:YtsecSt}, the compensation formula and the formulas at p. 108 in \cite{Stelzer10}
\begin{equation}\label{eq:Ei4}
\begin{split}
\E I_4 & = \E \bigg( \izede  \calv_{s-} \df[ \vect(\qvl), \vect(\qvl)^{\ast} ]  \calv_{s-} \bigg) \\
& = \E \bigg( \izede  \calv_{s-} \df[ \vect(\qvl^\disc), (\vect(\qvl^\disc))^{\ast} ]^\disc  \calv_{s-} \bigg) \\
& = \izede \E \big( \calv_{s-}  \rho_L [ I_{d^2} + K_{d} + \vect(I_d) \vect(I_d)^{\ast} ]  \calv_{s-}  \big) \df s \\
& =  \rho_L \izede (\bsQ +K_{d} \bsQ  +  I_{d^2}) \E( \vect(V_s) \vect(V_s)^{\ast} ) \df s \\
& = \Delta \rho_L  (\bsQ +K_{d} \bsQ  +  I_{d^2}) \E \vect(V_0) \vect(V_0)^{\ast}.
\end{split}
\end{equation}
To compute $\E I_1$ we will need the following matrix identity, which is based on Fact~7.4.30 (xiv) in \cite{Bernstein05}. Let $A \in \mdone$ and $B,B^2 \in \mdd$ be symmetric matrices. Then, 

\begin{equation}\label{eq:matrFac}
\begin{split}
\quad\quad  & (A \otimes B + B \otimes A)(A \otimes B + B \otimes A)^\ast = (A \otimes B + K_d(A \otimes B) ) (A \otimes B + K_d(A \otimes B) )^\ast \\
& = (I + K_d) (A \otimes B) (A^\ast \otimes B) (I + K_d) = (I + K_d) \bsQ \vect(AA^{\ast}) \vect(B^2) (I + K_d).
\end{split}
\end{equation}

Write $b_s:= \E \vect(G_s G_s^{\ast}) \vect(V_s)^{\ast}$, which is finite by Lemma~\ref{le:momG1} with $p = 2$. Using the compensation formula, \eqref{eq:matrFac} and the definition of $f_s$ gives
\begin{equation}\label{eq:Efsdllf}
\begin{split}
\quad\,\, & \E  \bigg( \izede f_{s-} \df[L,L^{\ast}]_s  f_{s-}^{\ast} \bigg) \\
& = (\sigma_L + \sigma_W) \izede \E (f_s f_s^{\ast}) \df s \\
& = (\sigma_L + \sigma_W) \izede \E (G_{s-} \otimes \vshalf + \vshalf \otimes G_{s-})(G^{\ast}_{s-} \otimes \vshalf + \vshalf \otimes G_{s-}^{\ast}) \df s \\ 
& =  (\sigma_L + \sigma_W)\izede (I + K_d) \bsQ b_s (I + K_d) \df s \\
& =  (\sigma_L + \sigma_W) (I + K_d) \bsQ \bigg( \izede   b_s \df s \bigg)  (I + K_d).
\end{split}
\end{equation}
Finally, it follows from \eqref{eq:IEasds} that
\begin{equation}\label{eq:Ecalc}
\izede b_s^{\ast} \df s  =   \int_0^{\Delta}  \E \vect(V_s) a_{s-}^{\ast} \df s =  (\sigma_L + \sigma_W)^{-1} D.
\end{equation}

The result now is a direct consequence of \eqref{eq:adads}, \eqref{eq:IEasds}, \eqref{eq:i1234}, \eqref{eq:Ei4}, \eqref{eq:Efsdllf} and \eqref{eq:Ecalc}.

\begin{remark}\label{re:new}
An inspection of the proofs of Lemmas~\ref{le:acovgg} and  \ref{le:covYGG} shows that the moment assumptions \ref{as:El18fin} and \ref{as:Y0E4} are only needed to compute expectations of stochastic integrals with the integrator $L$. If $L$ has paths of finite variation, these expectations can be computed by using the compensation formulas given in Facts~\ref{fct:59drif} and \ref{fct:compGen} without \ref{as:El18fin} and \ref{as:Y0E4}.
\end{remark}

\begin{small}
\bibliographystyle{plainnat}
\bibliography{references}       
\end{small}
       
\end{document}